\documentclass[11pt]{amsart}   	
\usepackage[margin=1in]{geometry}
\usepackage{subfigure}					
\usepackage{amsmath}
\usepackage{amssymb, amsfonts}
\usepackage[nobysame]{amsrefs}
\usepackage{amsthm}
\usepackage{hyperref}
\usepackage{mathtools}

\usepackage[shortlabels]{enumitem}
\usepackage{comment}
\usepackage{Andrew}
\newcommand{\precdot}{\prec\mathrel{\mkern-5mu}\mathrel{\cdot}}

\usepackage[noabbrev,capitalise,nameinlink]{cleveref}

\definecolor{lavender}{rgb}{0.4,0,1.0}

\hypersetup{colorlinks=true, citecolor=lavender, linkcolor=lavender,urlcolor=lavender}

\makeatletter
\newtheorem*{rep@theorem}{\rep@title}
\newcommand{\newreptheorem}[2]{%
\newenvironment{rep#1}[1]{%
 \def\rep@title{#2 \ref{##1}}%
 \begin{rep@theorem}}%
 {\end{rep@theorem}}}
\makeatother

\newtheorem{theorem}{Theorem}[section]
\newreptheorem{theorem}{Theorem}

\newtheorem{proposition}[theorem]{Proposition}
\newtheorem{corollary}[theorem]{Corollary}
\newtheorem{conjecture}[theorem]{Conjecture}
\newtheorem{question}[theorem]{Question}

\newtheorem{lemma}[theorem]{Lemma}

\theoremstyle{definition}
\newtheorem{definition}[theorem]{Definition}
\newtheorem{remark}[theorem]{Remark}
\newtheorem{example}[theorem]{Example}

\newcommand{\OEIS}[1]{{\rm \href{http://oeis.org/#1}{\texttt{#1}}}}

\newcommand{\BB}{\mathsf{B}}
\renewcommand{\AA}{\mathsf{A}}
\renewcommand{\Z}{\mathbb{Z}}
\renewcommand {\tand}{\text{ and }}
\newcommand{\ATam}{\mathsf{ATam}}
\newcommand{\CSymATam}{\mathsf{CSymATam}}
\newcommand{\CTam}{\mathsf{CTam}}

\newcommand{\TT}{\mathsf{T}}

\newcommand{\Weak}{\mathrm{Weak}}
\newcommand{\OO}{\mathcal{O}}

\DeclareMathOperator{\Bicl}{Bicl}

\newcommand{\Inv}{\mathrm{Inv}}
\newcommand{\Tam}{\mathrm{Tam}}

\newcommand{\join}{\vee}
\newcommand{\bigjoin}{\bigvee}
\newcommand{\meet}{\wedge}
\newcommand{\bigmeet}{\bigwedge}
\newcommand{\opp}{\mathrm{op}}

\newcommand{\IndexingSet}{\mathcal{I}}
\newcommand{\DirectedSet}{\mathcal{A}}
\newcommand{\DirectLimit}{\varinjlim}
\newcommand{\InverseLimit}{\varprojlim}
\DeclareMathOperator{\PointwiseUnionClosure}{PUC}

\newcommand{\dfn}[1]{\textcolor{blue}{\emph{#1}}}

\usepackage{thmtools}

\title{Lattices from Pointed Building Sets: Generalized Ornamentation Lattices}

\author[]{Andrew Sack}
\address[]{Department of Mathematics, University of Michigan, Ann Arbor, MI 48109, USA}
\email{{\href{mailto:asack@umich.edu}{asack@umich.edu}}}

\begin{document}

\keywords{lattice, semidistributive, Tamari lattice, inverse limit}

\begin{abstract}

We introduce a novel combinatorial structure called \emph{pointed building sets}, which can be viewed as families of lattices equipped with compatibility relations. To each pointed building set $\BB$, we associate a complete lattice $\OO(\BB)$, called the \emph{ornamentation lattice} of $\BB$.

Special cases of this construction have already proven useful in understanding the structure of three families of posets: operahedron lattices, the affine Tamari lattice, and hypergraphic posets of subhypergraphs of the path hypergraph of an increasing tree.

The goal of this paper is to establish the theory of these generalized ornamentations. We examine several natural classes of pointed building sets which recover classical lattices such as the Tamari lattice, the lattice of topologies ordered by coarsening, and the lattice of naturally labeled partial orders. Furthermore, several theoretical directions are explored, including inverse limits and group actions. Notably, this leads to a straightforward construction of inverse limits of Tamari lattices, yielding infinite analogs of the Tamari lattice.

\end{abstract}

\maketitle

\section{Introduction}

There has been an explosion of study of combinatorial lattice theory in recent decades. In many cases, proving that a particular class of posets form lattices requires a great deal of effort~\cite{barkley2023affine, BarnardMcConville, defant2024operahedron, vonbell2024framing}. In this paper, we introduce the concept of a \emph{pointed building set}, a simple combinatorial object that is reminiscent of the \emph{building sets} of De Concini and Procesi~\cite{de1995wonderful}. Building sets were introduced to understand the ``wonderful'' compactifications of hyperplane arrangement complements, and have since become a central tool in the study of matroids~\cite{adiprasito2018hodge, Feichtner2005} and generalized permutohedra~\cite{postnikov2008faces, Postnikov_GP}.  

Pointed building sets are collections of pointed sets satisfying natural closure properties.  To each pointed building set $\BB$ we associate a poset of functions called \emph{ornamentations}, which will turn out to be a lattice. Recently, special cases of ornamentation lattices have served as a tool to dramatically simplify proofs that several distinct posets are lattices or to otherwise understand their structure~\cite{abram2025ornamentation, barkley2025affine, defant2024operahedron}. Special cases of ornamentation lattices arising from natural pointed building sets include the Tamari lattice, the affine Tamari lattice of Barkley and Defant~\cite{barkley2025affine}, the lattice of topologies ordered by coarsening, and the lattice of naturally labeled partial orders.
We will now present the main definitions of this paper in order to expand upon this background.

\begin{definition}
\label{def:pointed_building_set}
    Let $\IndexingSet$ be an indexing set. A \dfn{pointed subset} $(S, i)$ of $\IndexingSet$ is an ordered pair such that $S \subseteq \IndexingSet$ and $i \in S$. We say that $(S, i)$ is \dfn{pointed} at $i$. As an abuse of notation, it will generally be useful to treat a pointed set as its underlying set. For example, we will say that $j \in (S, i)$ if $j \in S$ and that $(S, i) \subseteq (T, j)$ if $S \subseteq T$. If $\mathcal C$ is a collection of pointed subsets of $\IndexingSet$, we define $$\mathcal C|_i := \{(S, j) \in \mathcal C \mid j = i\}.$$
    
    A \dfn{pointed building set} $\BB$ is a collection of pointed subsets of $\IndexingSet$ such that the following three axioms hold:
    \begin{enumerate}
        \item $\BB$ \emph{contains all singletons}, i.e., for all $i \in \IndexingSet$ we have $(\{i\}, i) \in \BB$;
        \item $\BB$ is \emph{transitively closed}, i.e., for all $(S, i), (T, j) \in \BB$ if $j \in S$ then $(S \cup T, i) \in \BB$;
        \item $\BB$ is \emph{closed under arbitrary pointwise unions}, i.e., for all $i \in \IndexingSet$, the collection $\BB|_i$ is closed under arbitrary (non-empty) unions.
    \end{enumerate}

\end{definition}    
Note that axiom (2) implies axiom (3) if $\IndexingSet$ is finite. For $i \in \IndexingSet$, note that axioms (1) and (3) guarantee that $\BB|_{i}$ is a complete lattice ordered by inclusion. See~\cref{fig:pointed_building_set_example} for an example of a pointed building set.

\begin{figure}
    \centering
    \includegraphics[scale = 1.25]{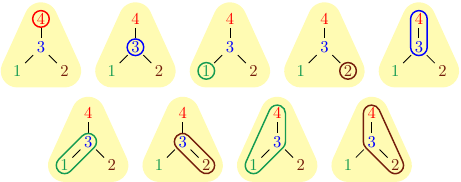}
    \caption{An example of a pointed building set. In this example, all pointed sets are pointed at their minimal element. We color the outline of a set with the same color as its pointed element.}
    \label{fig:pointed_building_set_example}
\end{figure}

\begin{example}
\label{ex:digraphical_pointed_building_set}
    Let $D = (V, E)$ be a directed graph. For $v \in V$, a subset $S \subseteq V$ is called \dfn{$v$-connected} if for all $u \in S$ there exists a directed path in $S$ from $v$ to $u$. Let $$\BB(D) := \{(S, v) \mid \text{$S$ is $v$-connected in $D$}\}.$$    
    We call $\BB(D)$ the \dfn{digraphical pointed building set} of $D$. \cref{fig:pointed_building_set_example} is a digraphical pointed building set when all edges are oriented upwards.
\end{example}

\begin{definition}
\label{def:ornamentation}
    Given an indexing set $\IndexingSet$ and a pointed building set $\BB$ on $\IndexingSet$, an \dfn{ornamentation} of $\BB$ is a function $\rho: \IndexingSet \to \BB$ such that

    \begin{enumerate}
        \item \emph{$\rho(i)$ is pointed at $i$}, i.e., for all $i \in \IndexingSet$, $\rho(i) \in \BB|_{i}$;
        \item $\rho$ is \emph{transitively closed}, i.e., for all $i, j \in \IndexingSet$ if $j \in \rho(i)$ then $\rho(j) \subseteq \rho(i)$.
    \end{enumerate}
Let $\OO(\BB)$ be the set of ornamentations of $\BB$.
\end{definition}

$\OO(\BB)$ has a natural partial order, where $\rho_1 \preceq \rho_2$ if for all $i \in \IndexingSet,$ we have $\rho_1(i) \subseteq \rho_2(i)$.   The following is our main result and justifies calling $(\OO(\BB), \preceq)$ the \dfn{ornamentation lattice} of $\BB$. 

\begin{figure}
    \centering
    \includegraphics[scale = 1.25]{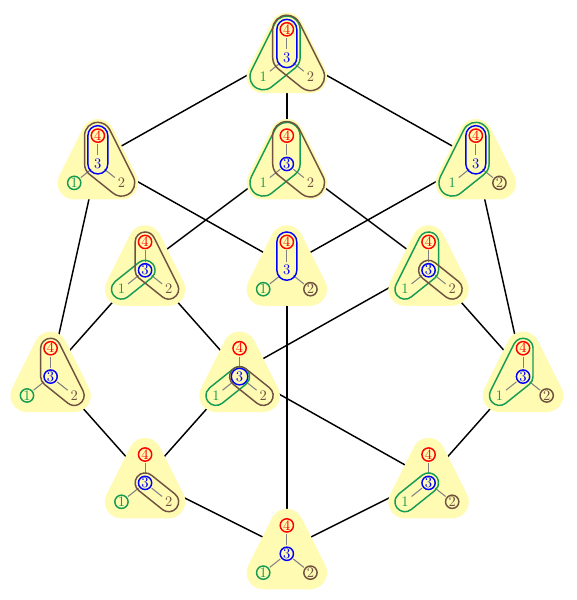}
    \caption{The ornamentation lattice of the pointed building set in~\cref{fig:pointed_building_set_example}.}
    \label{fig:ornamentations_example}
\end{figure}

\begin{theorem}
\label{thm:complete_lattice}
    $(\OO(\BB), \preceq)$ is a complete lattice.
\end{theorem}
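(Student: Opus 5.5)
The plan is to show that $\OO(\BB)$ has all joins, since a poset with arbitrary joins (including the empty join, i.e.\ a bottom element) is a complete lattice. The natural candidate for a join is a transitive-closure style construction, so I will build joins explicitly by iterating "union, then close".

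First, the bottom element is the ornamentation $\rho_0(i)=(\{i\},i)$. It is allowed by axiom (1), and it is transitively closed trivially. The top element is $\rho_1(i)=\bigcup \BB|_i$, the maximum of the complete lattice $\BB|_i$; this is pointed at $i$ by axiom (3). To see it is transitively closed, take $j\in\rho_1(i)$. Axiom (2) gives $(\rho_1(i)\cup\rho_1(j),i)\in\BB|_i$, so by maximality $\rho_1(j)\subseteq\rho_1(i)$.

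Next, for a nonempty family $\{\rho_\alpha\}_{\alpha\in A}\subseteq\OO(\BB)$, I will construct the join. Set $\sigma_0(i)=\bigcup_\alpha\rho_\alpha(i)$, which lies in $\BB|_i$ by axiom (3). Then define inductively
\[
\sigma_{n+1}(i)=\bigcup_{j\in\sigma_n(i)}\sigma_n(j)\cup\sigma_n(i).
\]
The key check is that $\sigma_{n+1}(i)\in\BB|_i$. For each $j\in\sigma_n(i)$, axiom (2) gives $(\sigma_n(i)\cup\sigma_n(j),i)\in\BB$. All of these sets contain $i$ and are pointed at $i$, so axiom (3) puts their union in $\BB|_i$; the $j=i$ term covers $\sigma_n(i)$ itself. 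Put $\sigma(i)=\bigcup_n\sigma_n(i)$, which is an increasing union and again lies in $\BB|_i$ by axiom (3).

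Transitive closure of $\sigma$ follows from the iteration. If $j\in\sigma(i)$ then $j\in\sigma_n(i)$ for some $n$. For each $m\ge n$ we have $j\in\sigma_m(i)$, hence $\sigma_m(j)\subseteq\sigma_{m+1}(i)$, and taking the union over $m$ gives $\sigma(j)\subseteq\sigma(i)$. So $\sigma\in\OO(\BB)$, and clearly $\rho_\alpha\preceq\sigma$ for every $\alpha$.

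For minimality, let $\tau\in\OO(\BB)$ be an upper bound of the family. Then $\sigma_0(i)\subseteq\tau(i)$ for all $i$. Assuming $\sigma_n\preceq\tau$, each $j\in\sigma_n(i)$ lies in $\tau(i)$, so $\sigma_n(j)\subseteq\tau(j)\subseteq\tau(i)$ by transitive closure of $\tau$; hence $\sigma_{n+1}\preceq\tau$. By induction $\sigma\preceq\tau$, so $\sigma$ is the join. Meets then exist formally as joins of sets of lower bounds, which is nonempty since it contains $\rho_0$. A side remark: meets are not simply pointwise intersections, because intersections need not lie in $\BB|_i$.

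The main obstacle is ensuring that each closure step stays inside $\BB|_i$ when $\IndexingSet$ is infinite. This is exactly why I use countably many iterations and rely on axiom (3) for arbitrary unions, rather than a single finite closure.
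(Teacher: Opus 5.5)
Your proof is correct, but it goes the opposite way from the paper's main argument.

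The paper shows that $\OO(\BB)$ is bounded and is a complete \emph{meet}-semilattice. Its meets are computed coordinatewise in the lattices $\BB|_i$: $\lambda(i)$ is the largest member of $\BB|_i$ contained in $\bigcap_{\rho\in\Omega}\rho(i)$. Transitive closure of $\lambda$ then follows in one line from axiom (2) and the maximality of $\lambda(i)$, with no iteration. Your side remark that meets are not pointwise intersections is exactly why the paper takes this largest sub-member.

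Your join coincides with the explicit join $\nu$ that the paper builds afterwards as a supplement. Take the digraph with an edge $i\to j$ whenever $j\in\sigma_0(i)$; this is the paper's $D(\sigma)$, where the paper's $\sigma$ is your $\sigma_0$. Your $\sigma_n(i)$ is the set of vertices reachable from $i$ in at most $2^n$ steps. So your $\sigma(i)$ is the paper's reachability set $\pi(i)$, which equals $\nu(i)$.

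The paper certifies $\nu(i)\in\BB|_i$ by applying axiom (2) along each finite path and then uniting over paths. You instead keep every stage inside $\BB|_i$ through the doubling step. Both work.

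The trade-off is this:
\begin{itemize}
\item Your route gives an explicit formula for joins, the operation that is \emph{not} coordinatewise, but leaves meets abstract.
\item The paper's main proof gives the coordinatewise meet directly and supplies joins separately. It later relies on both explicit descriptions, e.g.\ to see that $\OO_G(\BB)$ is a sublattice.
\end{itemize}

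One small correction to your closing remark: the countable iteration is what achieves transitive closure. Membership of each stage in $\BB|_i$ follows from axioms (2) and (3) whether or not $\IndexingSet$ is infinite.
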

We delay the proof of \cref{thm:complete_lattice} until \cref{sec:Lattice_Properties}. See~\cref{fig:ornamentations_example} for the ornamentation lattice of~\cref{fig:pointed_building_set_example}.  Note that for an ornamentation $\rho$, it is not necessarily true that $\rho(i) \tand \rho(j)$ are nested or disjoint.

In~\cite{defant2024operahedron}, ornamentations were used to prove that the operahedron poset of a rooted plane tree $\mathrm{Op}(\TT)$~\cite{laplante2022diagonal}*{Definition 2.8} is a lattice, confirming a conjecture of Laplante-Anfossi. In particular, an embedding $\mathrm{Op}(\TT) \hookrightarrow \mathcal L(\TT) \times \OO(\BB(\TT))$ provided a critical structural lemma that enabled the proof, where $\mathcal L(\TT)$ is the lattice of linear extensions of $\TT$.

Ornamentation lattices of rooted trees were further studied in~\cite{ajran2025pop}, where they were shown to have several nice combinatorial properties such as \emph{semidistributivity}. The ornamentations defined in this paper are an attempt to generalize ornamentations of rooted trees defined in~\cite{defant2024operahedron} while still retaining the simple combinatorics that make them useful. 

As a demonstration that these generalizations are useful, an early draft of this paper was communicated to the authors of~\cite{barkley2025affine} who used our generalized ornamentations to understand the structure of the \emph{affine Tamari lattice}, which turns out to be isomorphic to $\OO(\BB(C_n))$, the digraphical ornamentation lattice of an $n$-cycle. This isomorphism helped construct a classification of the possible lengths of maximal green sequences for the completed path algebra of the oriented $n$-cycle. This will be discussed further in~\cref{sec:Examples} and~\cref{sec:group_actions}. 

A further application of generalized ornamentations was found in~\cite{abram2025ornamentation}. In particular, let $\mathbb H$ be a hypergraph with vertex set $[n] := \{1, \dots, n\}$. For a subset $X \subseteq [n]$, define the simplex $$\Delta_X := \operatorname{conv} \{e_i \mid i \in X\}.$$ The hypergraphic polytope $\Delta_{\mathbb H}$ is the Minkowski sum $\Delta_{\mathbb H} := \sum_{E \in \mathbb H} \Delta_E$.
The Hasse diagram of the \emph{hypergraphic poset} $P_{\mathbb H}$ is a particular orientation of the 1-skeleton of $\Delta_{\mathbb H}.$ In~\cite[Proposition 6.27]{abram2025ornamentation}, a special case called \emph{intreeval hypergraphic posets} was studied. For a directed (but not necessarily rooted) tree $D$ with vertex labels increasing along paths, let $\mathbb H(D)$ be the hypergraph consisting of all directed paths of $D$, and let $\mathbb H'$ be a subhypergraph of $\mathbb H(D)$. A surjective map $\OO(\BB(D)) \to P_{\mathbb H'}$ was used to completely classify when the hypergraphic poset of $\mathbb H'$ is a lattice.

The main goal of this paper is to introduce and explore the theory of ornamentation lattices. The structure of this paper is as follows. In~\cref{sec:Preliminaries}, we review necessary notation, terminology, and background for the remainder of the paper. In~\cref{sec:Examples}, we give numerous examples of both pointed building sets and ornamentation lattices. In~\cref{sec:Lattice_Properties}, we prove various poset and lattice-theoretic properties about ornamentation lattices.
    
In~\cref{sec:Digraphical_Ornamentation_Lattices}, we study the special case of digraphical ornamentations with a focus on directed trees.  This culminates in our second main result.

\begin{reptheorem}{thm:tree_duality}[Directed tree duality]
    Let $D$ be a directed tree. Then $\OO(\BB(D^\opp)) \simeq \OO(\BB(D))^\opp$. 
\end{reptheorem}

In~\cref{sec:direct_limits}, we do some light category theory and show that, in some sense, direct limits of pointed building sets induce inverse limits of ornamentation lattices. As a consequence, we obtain simple models for inverse limits of Tamari lattices. In~\cref{sec:infinite_Tamari}, we investigate a connection between 312-avoiding total orders and ornamentation lattices. In~\cref{sec:group_actions}, we study ornamentations that are invariant under group actions on their base set. This allows us to provide a simple description of the \emph{cyclic Tamari lattice}~\cite{barkley2025affine}, which answers a question of Barkley and Defant. This description enables a more direct proof of the maximal length of a chain in the cyclic Tamari lattice~\cite{barkley2022combinatorial}*{Theorem 8.4}.
In~\cref{sec:geometric_interpretations}, we provide some geometric interpretations of ornamentation lattices. Finally, in~\cref{sec:questions}, we end with some questions and further directions to explore.

\section{Preliminaries}
\label{sec:Preliminaries}

We assume familiarity with the theory of posets (partially ordered sets), a standard reference for which is~\cite{Stanley}*{Chapter 3}. We further define $[n] := \{1, \dots, n\}$.

Let $P = (X, \preceq)$ be a poset and $D = (V,E)$ be a directed graph. 
The \dfn{dual} of $P$ (denoted $P^\opp = (X, \preceq_\opp)$) is a poset on $X$ with $x \preceq_\opp y$ if and only if $y \preceq x$. The \dfn{dual} of $D$ (denoted $D^\opp$) is the directed graph on the same vertex set as $D$ but with all edge orientations reversed. We say $D$ is a \dfn{directed tree} if the underlying undirected graph of $D$ is a tree. A \dfn{directed path} in $D$ is a sequence of vertices (possibly containing only 1 element) $v_1, \dots, v_n$ such that there is an edge $(v_i, v_{i+1})$ for each $1 \le i \le n-1$.
If $D$ is acyclic, we define a poset $(V, \preceq_D)$ by $u \preceq_D v$ if and only if there exists a directed path from $u$ to $v$ in $D$. 

For $x,y\in P$ with $x\preceq y$, the \dfn{(closed) interval} from $x$ to $y$ is the set $$[x,y]:=\{z\in P:x\preceq z\preceq y\},$$ which we view as a subposet of $P$. Further define the \dfn{open interval} $$(x,y):=\{z\in P:x\prec z\prec y\}$$ and the \dfn{half-open} intervals 
    $$\begin{aligned} 
    [x,y)&:=\{z\in P:x\preceq z\prec y\}\\
    (x,y]&:=\{z\in P:x\prec z\preceq y\}.
    \end{aligned}
    $$ 
For convenience, we define $[x,x) := \{x\}$ in this paper.

If $x\prec y$ and $[x,y]=\{x,y\}$, then we say $y$ \dfn{covers} $x$ and write $x\precdot y$. For $x\in P$, let \[\Delta_P(x)=\{z\in P:z\preceq x\}\quad\text{and}\quad\nabla_P(x)=\{z\in P:x\preceq z\}.\] At times it will be convenient to use the notation $[x, \infty)$ in place of $\nabla_P(x)$.

A poset $L$ is called a lattice if any pair of elements $x, y \in L$ has a least upper bound called the \dfn{join} and denoted $x \join y$ and a greatest lower bound called the \dfn{meet} and denoted $x \meet y$. A lattice is called \dfn{complete} if any arbitrary subset $\Omega \subseteq L$ has a least upper bound $\bigjoin \Omega$ and greatest lower bound $\bigmeet \Omega$. Note that all finite lattices are complete.

A lattice $L$ is called \dfn{meet-semidistributive} if for all elements $x,y,z\in L$ satisfying $x\meet y=x\meet z$, we have $x\meet y=x\meet(y\join z)$. We say $L$ is \dfn{join-semidistributive} if for all $x,y,z\in L$ satisfying $x\join y=x\join z$, we have $x\join y=x\join(y\meet z)$. We call $L$ \dfn{semidistributive} if $L$ is both meet-semidistributive and join-semidistributive. 
An element $x \in L$ is called \dfn{join-irreducible} if for all finite non-empty subsets $\Omega \subseteq L$ such that $\bigjoin \Omega = x$ we have $x \in \Omega$.

Let $P$ be a poset with unique minimal element $\hat 0$. An element $x \in P$ is called an \dfn{atom} if it covers $\hat 0$.
    A complete lattice is called \dfn{atomic} (or sometimes \dfn{atomistic}) if every element is a join of a (possibly infinite) set of atoms.

Let $\IndexingSet$ be a finite set. A \dfn{building set}~\cite{de1995wonderful, Postnikov_GP} $X$ on $\IndexingSet$  is a collection of non-empty subsets of $\IndexingSet$ satisfying \begin{enumerate}
        \item for all $i \in \IndexingSet$, $\{i\} \in X$; 
        \item for all $I, J \in X$, if $I \cap J \neq \emptyset$, then $I \cup J \in X$.
    \end{enumerate}

\section{Examples}
\label{sec:Examples}

Every finite lattice is an ornamentation lattice for some pointed building set as the following example shows. 
\begin{example}
Let $X$ be a set and let $Y \subseteq 2^X$ be any collection of subsets of $X$ that is closed under unions. We take $\IndexingSet = X \sqcup \{\hat 0\}$ where $\sqcup$ denotes disjoint union. Define $$\BB = \{(\{i\}, i) \mid i \in \IndexingSet\} \cup \{(Z \sqcup \{\hat 0\}, \hat 0) \mid Z \in Y\}.$$

It is known that every finite lattice $L = (X, \preceq)$ is isomorphic (as a poset) to a union-closed family of sets ordered under inclusion. In particular, consider the union-closed family $Y = \{S_x \mid x \in X\}$ where $$S_x = \{y \in L \mid x \not\preceq y\}.$$ Then $\OO(\BB)$ is isomorphic to $L$.

\end{example}

As all lattices are ornamentation lattices, one should instead view this as a construction for combining the different lattices $\BB|_i$ to make a new lattice. However, in practice there are many natural classes of pointed building sets and therefore natural classes of ornamentation lattices.

\begin{example}
Let $G = (V,E)$ be a graph. Let $$\BB(G) = \{(C,i) \subseteq V \mid C \text{ is connected in $G$ and } i \in C\}.$$
    We call $\BB(G)$ the \dfn{graphical pointed building set} of $G$.
    
\end{example}

\begin{example}
    
Let $X$ be an (unpointed) building set on $\IndexingSet$. Then take $$\BB(X) := \{(S, i) \mid S \in X \tand i \in S\}.$$ When $\IndexingSet$ is the set of vertices of a finite graph and $X$ is the \emph{graphical} building set of $\IndexingSet$, i.e., the set of all connected subgraphs, then $\BB(X)$ is the graphical pointed building set of $\IndexingSet$.
\end{example}

\begin{example}
Let $X$ be an (unpointed) building set on $[n]$. Then take $$\BB(X) := \{(S, i) \mid S \in X \tand i = \min S\}.$$
\end{example}

\begin{example}
\label{ex:TamariLatticeAndInfiniteTamariLattice}
    For $a, b \in \Z$, let $[a,b] := \{i \in \Z \mid a \le i\le b\}$ and let $[a, \infty) := \{i \in \Z \mid a \le i\}$.    

    \begin{enumerate}
        \item If $\IndexingSet = [n]$ and  $$\BB = \{([a, b], a) \mid a, b \in \Z \st 1 \le a \le b \le n \},$$ then $\Tam_n := \OO(\BB)$ is the \dfn{Tamari lattice} as proven by Huang and Tamari~\cite{HUANG19727}. 

        \item If $\IndexingSet = \Z^+$ and $$\BB = \{([a, b], a) \mid a, b \in \Z^+ \st a \le b \} \cup \{([a, \infty), a) \mid a \in \Z^+ \},$$ then we call $\OO(\BB)$ the \dfn{infinite Tamari lattice} and denote it $\Tam_\infty$. 
        \item If $\IndexingSet = \Z$ and $$\BB = \{([a, b], a) \mid a, b \in \Z \st a \le b \} \cup \{([a, \infty), a) \mid a \in \Z \},$$ then we call $\OO(\BB)$ the \dfn{bi-infinite Tamari lattice} and denote it $\Tam_{\pm \infty}$. 
    \end{enumerate}

    Note that each of these Tamari lattices is a digraphical ornamentation lattice of a (possibly infinite) directed path. We will study $\Tam_\infty$ and $\Tam_{\pm \infty}$ in much greater generality in~\cref{sec:infinite_Tamari}.
 
\end{example}    

\begin{example}
\label{ex:ContinuousTamariLattice}
        If $\IndexingSet$ is a closed interval in $\R$, $$\BB = \{([a, b), a) \mid a, b \in \IndexingSet \st a \le b\}$$ is a pointed building set. We call $\OO(\BB)$ the \dfn{continuous Tamari lattice.}
\end{example}

\begin{example}
    Let $K_n$ be the complete graph on $n$ vertices and let $\BB(K_n)$ be the graphical pointed building set on $K_n$. Then $\OO(\BB(K_n))$ is in bijection with the set of transitive digraphs on $[n]$ which is enumerated in~\cite{OEIS} by the sequence~\OEIS{A000798}. A transitive digraph is a directed graph such that
    \begin{itemize}
        \item whenever $(u, v)$ and $(v, w)$ are edges, so is $(u, w)$ and
        \item for all vertices $u$, $(u, u)$ is an edge.
    \end{itemize}
    To build a transitive digraph from an ornamentation $\rho$, take the edge set $$\{(i, j) \mid i \in [n] \tand j \in \rho(i)\}.$$
    Conversely, given a transitive digraph $D$, let $\rho_D(i)$ be the set of all $j \in [n]$ that are reachable from $i$ via a directed path (including $i$ itself).

    It is known that transitive digraphs are in bijection with topologies~\cite{evans1967computer} as follows. Let $D$ be a transitive directed graph on $[n]$. Then the collection $\{\rho_D(i) \mid i \in [n]\}$ forms a basis for the corresponding topology. This shows that $\OO(\BB(K_n))$ is isomorphic to the lattice of topologies ordered by coarsening. 
\end{example}

\begin{example}
Direct the edges of $K_n$ from $i$ to $j$ for $i < j$ and let $\BB(K_n)$ be the digraphical pointed building set of $K_n$. Then $\OO(\BB(K_n))$ is in bijection with the set of naturally labeled posets which is enumerated in~\cite{OEIS} by the sequence~\OEIS{A006455}. To obtain a naturally labeled poset from an ornamentation $\rho \in \OO(\BB(K_n))$, take relations $i \le j$ for all $j \in \rho(i)$. This recovers the lattice of natural partial orders of Avann~\cite{avann1972lattice}.
\end{example}

\begin{example}
\label{ex:aff_tam}
    Let $C_n$ be the oriented cycle graph whose vertex set is $[n]$ and whose edge set is $$\{(i,\; i+1) \mid 1 \le i \le n-1\} \cup \{(n, 1)\},$$ and let $\BB(C_n)$ be the digraphical pointed building set on $C_n$. Then $\OO(\BB(C_n))$ is isomorphic to the \dfn{affine Tamari lattice} $\ATam_n$ of Barkley and Defant~\cite{barkley2025affine}*{Proposition 8.3}. The Hasse diagram of this lattice is an orientation of the type-D associahedron.
    
    An early draft of this paper was communicated to Barkley and Defant, and this result was used to calculate that the maximal length of a chain in the affine Tamari lattice is $\binom{n + 1}{2}$.

\end{example}

\section{Lattice properties}
\label{sec:Lattice_Properties}

\subsection{Proof of Theorem \ref{thm:complete_lattice}}

\begin{remark}
\label{rmk:isomorphic_copy}
    For each $i \in \IndexingSet$, we will identify $\BB|_i$ with an isomorphic copy of $\BB|_i$ in $\OO(\BB)$ as follows. For $(S, i) \in \BB|_i$, define $$
    \rho_{(S,i)}(j) := \begin{cases}
        (S, i) & j = i \\ (\{j\}, j) & j \neq i.
    \end{cases}$$

    Furthermore, this copy is a principal order ideal in $\OO(\BB)$ with maximum $\rho_{\max(\BB|_i)}$.
\end{remark}

\begin{lemma}
    For all pointed building sets $\BB$, the poset $(\OO(\BB), \preceq)$ is bounded, that is, has a unique minimal and unique maximal element.
\end{lemma}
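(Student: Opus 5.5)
I will exhibit the two extremal elements explicitly and check that each is an ornamentation comparable with every other ornamentation.

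For the minimum, take $\hat 0(i) := (\{i\}, i)$ for every $i \in \IndexingSet$. By axiom (1) of \cref{def:pointed_building_set} this lies in $\BB|_i$. It is transitively closed because $j \in \{i\}$ forces $j = i$, and then $\hat 0(j) = \hat 0(i)$. Moreover, every ornamentation $\rho$ satisfies $i \in \rho(i)$, since $\rho(i)$ is pointed at $i$. Hence $\{i\} \subseteq \rho(i)$ for all $i$, and so $\hat 0 \preceq \rho$.

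For the maximum, the natural candidate is $\hat 1(i) := \max(\BB|_i)$. This is the union of all sets in $\BB|_i$, which belongs to $\BB|_i$ by axiom (3); the union is non-empty since $(\{i\},i)$ is present. We have $\rho \preceq \hat 1$ for any ornamentation $\rho$, because $\rho(i) \in \BB|_i$. The key point, and the only real step, is that $\hat 1$ is transitively closed. Suppose $j \in \hat 1(i)$. By axiom (2), with $(S,i) = \hat 1(i)$ and $(T,j) = \hat 1(j)$, the set $(\hat 1(i) \cup \hat 1(j), i)$ lies in $\BB|_i$. By maximality of $\hat 1(i)$ in $\BB|_i$ we get $\hat 1(i) \cup \hat 1(j) \subseteq \hat 1(i)$, i.e.\ $\hat 1(j) \subseteq \hat 1(i)$. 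So $\hat 1 \in \OO(\BB)$.

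Since $\hat 0 \preceq \rho \preceq \hat 1$ for all $\rho \in \OO(\BB)$, these are the unique minimal and maximal elements, and $\OO(\BB)$ is bounded. I expect no serious obstacle. The one point needing care is the closure of $\hat 1$, and there the transitivity axiom combined with maximality does all the work. This argument also confirms the claim in \cref{rmk:isomorphic_copy} that $\rho_{\max(\BB|_i)}$ makes sense as an element of $\OO(\BB)$.
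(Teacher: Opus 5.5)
Your proposal is correct and is the paper's proof: the singleton ornamentation is the minimum, and $i \mapsto \max(\BB|_i)$ is the maximum, with its transitive closure coming from axiom (2) together with maximality in $\BB|_i$. You also spell out the comparisons $\hat 0 \preceq \rho \preceq \hat 1$ that the paper calls immediate; your closing aside about \cref{rmk:isomorphic_copy} is not needed, since $\rho_{(S,i)}$ is an ornamentation for every $(S,i) \in \BB|_i$ directly from the definition.
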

\begin{proof}
It immediately follows that $\rho: \IndexingSet \to \BB$ defined by $\rho(i) = (\{i\}, i)$ is an ornamentation and is minimal. 
    Define $\sigma: \IndexingSet \to \BB$ by $\sigma(i) = \max(\BB|_i).$
    We claim that $\sigma$ is an ornamentation. Indeed, suppose that $j \in \sigma(i)$. Then $(\sigma(i) \cup \sigma(j), i) \in \BB$ which is contained in $\sigma(i)$ because $\sigma(i)$ is maximal in $\BB|_i$. It is immediate that $\sigma$ is maximal by construction.
\end{proof}

\begin{proof}[Proof of \cref{thm:complete_lattice}]
As $(\OO(\BB), \preceq)$ is bounded, it suffices to show that it is a complete meet-semilattice. Let $\Omega \subseteq \OO(\BB)$. We claim that $\bigwedge \Omega$ is the pointwise meet of $\Omega$. More precisely, define $\lambda: \IndexingSet \to \BB$ by $\lambda(i) := \bigwedge_{\rho \in \Omega} \rho(i)$ where the meet of each $\rho(i)$ is taken in $\BB|_i$. 

Alternatively, for $i \in \IndexingSet$ define $ \Omega|_{i} := \bigcap_{\rho \in \Omega} \rho(i)$. Then
 $$\lambda(i) = \left(\bigcup\limits_{\substack{S \in \BB|_{i} \\ S \subseteq \Omega|_{i}}} S,\ i \right).$$
That is, $\lambda(i)$ is the largest pointed set in $\BB$ contained in $\Omega|_{i}$ pointed at $i$.

We claim that $\lambda$ is an ornamentation. Let $j \in \lambda(i)$. Then for all $\rho \in \Omega$ we have $j \in \rho(i)$. We conclude that 
$\lambda(j) \subseteq \rho(j) \subseteq \rho(i),$
and so $\lambda(j) \subseteq \Omega|_i$.

As $(\lambda(i), i),\; (\lambda(j),j) \in \BB$ we have that $\left(\lambda(i) \cup \lambda(j),\; i\right) \in \BB$. Then by the maximality of $\lambda(i)$, we have that $\lambda(j) \subseteq \left(\lambda(i) \cup \lambda(j)\right) \subseteq \lambda(i)$. Hence $\lambda$ is an ornamentation.

For all $\rho \in \Omega$ and $i \in \IndexingSet$, $\lambda(i) \subseteq \rho(i)$, and so $\lambda \preceq \rho$. Furthermore, if $\sigma \in \OO(\BB)$ such that for all $\rho \in \Omega$ and $i \in \IndexingSet$ we have $\sigma(i) \subseteq \rho(i)$, then $\sigma(i) \subseteq  \Omega|_i$ and hence $\sigma(i) \subseteq \lambda(i)$.

Hence $\lambda = \bigmeet \Omega$.
\end{proof}

While the previous theorem suffices to show that $(\OO(\BB), \preceq)$ is a complete lattice, we can also explicitly construct the join.

\begin{definition}
Fix $\Omega \subseteq \OO(\BB)$ and let $\sigma: \IndexingSet \to \BB$ be defined by $$\sigma(i) = \left(\bigcup\limits_{\rho \in \Omega} \rho(i),\,\, i\right).$$  
\end{definition}

One may observe that $\sigma$ is \emph{not} necessarily an ornamentation, but is a well-defined function as each $\sigma(i)$ is a union of sets pointed at $i$.  See~\cref{fig:sigmaNonOrnamentation} for an example of $\sigma$. We will use $\sigma$ to construct $\bigjoin \Omega$.

\begin{figure}[h]
    \centering
    \includegraphics[scale = 1]{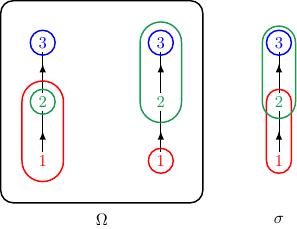}
    \caption{An example of $\sigma$ in $\Tam_3$. Observe that $\sigma$ is not necessarily an ornamentation.}
    \label{fig:sigmaNonOrnamentation}
\end{figure}

\begin{definition}
        Define a directed graph $D(\sigma)$  on $\IndexingSet$ with edge set $$\{\left(i,\; j\right) \mid j \in \sigma(i)\}.$$ For $i \in \IndexingSet$, let $$\pi(i) := \{j \in \IndexingSet \mid \text{there exists a directed path from $i$ to $j$ in }D(\sigma)\}.$$ 
\end{definition}

\begin{definition}

    Define $\nu: \IndexingSet \to \BB$ by $$\nu(i) := \left(\bigcup\limits_{j \in \pi(i)} \sigma(j),\,\, i\right).$$ 
    
\end{definition}

We will show that $\nu = \bigjoin \Omega$.

\begin{lemma}
\label{lem:path}
    If $i \in \IndexingSet$, then for all $j \in \nu(i)$, there is a path from $i$ to $j$ in $D(\sigma)$. 
\end{lemma}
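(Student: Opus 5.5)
This follows directly from unwinding the definitions of $\nu$, $\pi$, $\sigma$, and $D(\sigma)$; no earlier result is needed beyond these definitions. Fix $i \in \IndexingSet$ and $j \in \nu(i)$. By definition of $\nu(i)$ as a union, there is some $k \in \pi(i)$ with $j \in \sigma(k)$. By definition of $\pi(i)$, there is a directed path $i = v_1, v_2, \dots, v_m = k$ in $D(\sigma)$, possibly of length zero, in which case $k = i$.

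Next we extend this path by one step. Since $j \in \sigma(k)$, the definition of the edge set of $D(\sigma)$ gives an edge $(k, j)$. Appending $j$ to the path from $i$ to $k$ therefore produces a directed path $i = v_1, \dots, v_m, j$ from $i$ to $j$ in $D(\sigma)$. This is what the lemma asserts. Equivalently, $j \in \pi(i)$, so $\nu(i) \subseteq \pi(i)$.

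One small point needs care: the paper's convention that a path may consist of a single vertex. This convention is what makes $i \in \pi(i)$ hold, and it also covers the case $j = i$. Even without relying on it, the edge $(i,i)$ is present, because $i \in \sigma(i)$: each $\rho(i)$ is pointed at $i$ and $\Omega$ is nonempty. The only potential subtlety is $\Omega = \emptyset$, where $\sigma(i)$ would be the empty union. In that case $\nu(i)$ is empty as a set, so the statement holds vacuously, or one adopts the convention that $\sigma(i) = \{i\}$. I do not expect any genuine obstacle; the lemma is essentially a restatement that $\pi(i)$ is closed under taking $\sigma$.
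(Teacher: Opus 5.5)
Your proposal is correct and follows the paper's proof exactly: you pick $k \in \pi(i)$ with $j \in \sigma(k)$, then extend the path from $i$ to $k$ by the edge $(k,j)$ of $D(\sigma)$. Your side remarks on the length-zero path convention and the degenerate case $\Omega = \emptyset$ (where $\nu(i)$ is empty, so the claim holds vacuously) are accurate, though the paper does not need them.
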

\begin{proof}
Observe that if $j \in \nu(i)$ then there exists $k \in \IndexingSet$ such that $j \in \sigma(k)$ and there is a path from $i$ to $k$ in $D(\sigma)$. But then there is an edge from $k$ to $j$, so there is a path from $i$ to $j$ in $D(\sigma)$.  
\end{proof}
\begin{lemma}
    $\nu$ is an ornamentation.
\end{lemma}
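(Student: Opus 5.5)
We must check two things: that $\nu(i)\in\BB|_i$, and that $\nu$ is transitively closed.

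For the first, $\nu(i)$ is pointed at $i$ because $i\in\pi(i)$ via the trivial path. For membership in $\BB$, note that each $\sigma(j)$ lies in $\BB|_j$, since it is a union of sets $\rho(j)\in\BB|_j$ and axiom (3) applies. The plan is to show that for every $j\in\pi(i)$, the set
\[
T_j := \bigcup_{k \text{ on some chosen path from } i \text{ to } j}\sigma(k)
\]
is in $\BB|_i$.

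We argue by induction on path length. For the trivial path, $T_i=\sigma(i)\in\BB|_i$. Suppose the path is $i=k_0,k_1,\dots,k_m=j$ and $T$ is the union for $k_0,\dots,k_{m-1}$, with $(T,i)\in\BB$ by induction. Since $k_m\in\sigma(k_{m-1})\subseteq T$, axiom (2) applied to $(T,i)$ and $(\sigma(k_m),k_m)$ gives $(T\cup\sigma(k_m),i)\in\BB$.

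Now write $\nu(i)=\bigcup_{j\in\pi(i)}\sigma(j)$. Choosing one path for each $j\in\pi(i)$ gives $\nu(i)=\bigcup_{j\in\pi(i)}T_j$, because each $T_j$ contains $\sigma(j)$ and is contained in $\nu(i)$, as every vertex on the path lies in $\pi(i)$. This is a nonempty union of members of $\BB|_i$, so axiom (3) gives $(\nu(i),i)\in\BB$. This is the only point where infinite unions need care; using axiom (3) rather than iterating axiom (2) handles it.

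For transitive closure, let $j\in\nu(i)$. By \cref{lem:path} there is a path from $i$ to $j$ in $D(\sigma)$. Concatenating paths gives $\pi(j)\subseteq\pi(i)$, hence
\[
\nu(j)=\bigcup_{k\in\pi(j)}\sigma(k)\subseteq\bigcup_{k\in\pi(i)}\sigma(k)=\nu(i).
\]
The main obstacle is the membership $(\nu(i),i)\in\BB$, specifically organizing the union along paths so that axiom (2) can be applied step by step. The transitivity condition follows immediately from \cref{lem:path}.
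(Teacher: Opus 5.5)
Your proof is correct and follows the paper's argument. Your induction on path length spells out the paper's one-line appeal to transitive closure of $\BB$ along each path in $D(\sigma)$; both proofs then apply axiom (3) to the union over paths, and get transitive closure from \cref{lem:path} plus concatenation of paths. As in the paper, your step $\sigma(j)\in\BB|_j$ tacitly assumes $\Omega\neq\emptyset$ (for $\Omega=\emptyset$ one gets $\sigma(j)=\emptyset$). This is harmless, since the empty join is simply the minimum ornamentation.
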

\begin{proof}
First we show that for each $i \in \IndexingSet$, we have $\nu(i) \in \BB$.  Let $$\mathcal P = \{P_j \mid j \in \IndexingSet \text{ and $P_j$ is a path from $i$ to $j$ in $D(\sigma)$}\}.$$

As pointed building sets are transitively closed, for each $P \in \mathcal P$, $\left(\bigcup_{j \in P} \sigma(j),\, i\right) \in \BB$.
We can write $\nu(i)$ as $$\nu(i) = \left(\bigcup\limits_{P \in \mathcal P } \bigcup_{j \in P} \sigma(j), \,\, i\right)$$
which is a union of sets pointed at $i$ and is hence in $\BB$.

        Let $j \in \nu(i)$ and let $k \in \nu(j)$. By \cref{lem:path}, there exists a path from $i$ to $j$ and a path from $j$ to $k$ in $D(\sigma)$. Therefore, there is a path from $i$ to $k$ in $D(\sigma)$. We conclude that $k \in \sigma(k) \subseteq \nu(i)$ and hence $\nu(j) \subseteq \nu(i)$.
\end{proof}

\begin{theorem}
For all $\Omega \subseteq \OO(\BB)$, we have that $\nu = \bigjoin \Omega$.
    
\end{theorem}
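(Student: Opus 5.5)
We have already shown that $\nu$ is an ornamentation, so it remains to check two things: that $\nu$ is an upper bound for $\Omega$, and that $\nu \preceq \tau$ for every ornamentation $\tau$ that is an upper bound of $\Omega$.

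\emph{Upper bound.} Fix $\rho \in \Omega$ and $i \in \IndexingSet$. The trivial path from $i$ to itself shows $i \in \pi(i)$. Hence
$$\rho(i) \subseteq \sigma(i) \subseteq \nu(i),$$
and therefore $\rho \preceq \nu$.

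\emph{Least upper bound.} Let $\tau \in \OO(\BB)$ with $\rho \preceq \tau$ for all $\rho \in \Omega$. We want $\nu(i) \subseteq \tau(i)$ for each $i$, and we reduce this to the claim that
$$j \in \pi(i) \implies \tau(j) \subseteq \tau(i).$$
Granting the claim, take $j \in \pi(i)$. Since $\tau$ bounds each $\rho(j)$, we get $\sigma(j) = \bigcup_{\rho \in \Omega} \rho(j) \subseteq \tau(j) \subseteq \tau(i)$. Taking the union over $j \in \pi(i)$ gives $\nu(i) \subseteq \tau(i)$.

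We prove the claim by induction on the length of a directed path $i = v_0, v_1, \dots, v_m = j$ in $D(\sigma)$. When $m = 0$ there is nothing to prove. For the inductive step, consider an edge $(v_{k}, v_{k+1})$. By definition of $D(\sigma)$ this means $v_{k+1} \in \sigma(v_k)$, so $v_{k+1} \in \rho(v_k)$ for some $\rho \in \Omega$, and hence $v_{k+1} \in \tau(v_k)$. Transitive closure of $\tau$ then gives $\tau(v_{k+1}) \subseteq \tau(v_k)$. Chaining these inclusions along the path yields $\tau(j) \subseteq \tau(i)$. Paths in $D(\sigma)$ are finite by definition even when $\IndexingSet$ is infinite, so this induction suffices.

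The one point needing care is this inductive step, namely the observation that every edge of $D(\sigma)$ lies inside a single $\rho(v_k)$ and hence inside $\tau(v_k)$. The rest is bookkeeping. Putting the two parts together, $\nu$ is an upper bound of $\Omega$ lying below every upper bound, so $\nu = \bigjoin \Omega$. This also agrees with the existence of joins already guaranteed by \cref{thm:complete_lattice}.
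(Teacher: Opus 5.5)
Your proof is correct and follows essentially the same route as the paper, which notes that every edge of $D(\sigma)$ is an edge of the reachability graph $D(\nu')$ of the upper bound, so $\pi(i) \subseteq \pi'(i) = \nu'(i)$. That is equivalent to your claim that $j \in \pi(i)$ implies $\tau(j) \subseteq \tau(i)$, and your induction on path length spells out the step the paper compresses into ``as $\nu'$ is an ornamentation, $\pi'(i) = \nu'(i)$.''
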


\begin{proof}
First observe that by construction, for all $\rho \in \Omega$ we have $\rho \preceq \nu$. Let $\nu' \in \OO(\BB)$ be such that for all $\rho \in \Omega$ we have $\rho \preceq \nu'$. It is immediate that for all $i \in \IndexingSet$, $\sigma(i) \subseteq \nu'(i)$. 

Define a directed graph $D(\nu')$ on $\IndexingSet$ with edge set $\{(i, j) \mid j \in \nu'(i)\}.$
    For $i \in \IndexingSet$ let $$\pi'(i) := \{j \in \IndexingSet \mid \text{there exists a path from $i$ to $j$ in }D(\nu')\}.$$

        Observe that if $(i, j)$ is an edge in $D(\sigma)$ then $(i, j)$ is also an edge in $D(\nu')$. Then it follows that $\pi(i) \subseteq \pi'(i)$ for all $i$. As $\nu'$ is an ornamentation, $\pi'(i) = \nu'(i)$ for all $i$.

    For $i \in \IndexingSet$, we have 
    $$\begin{aligned}
    \nu(i) &= \bigcup\limits_{j \in \pi(i)} \sigma(j)\\
    &\subseteq \bigcup\limits_{j \in \pi'(i)} \sigma(j)\\
    &\subseteq \bigcup\limits_{j \in \pi'(i)} \nu'(j)\\
    &= \nu'(i).
    \end{aligned}$$

    Hence $\nu \preceq \nu'$ and $\nu = \bigjoin \Omega$.
\end{proof}

\subsection{Lattice Properties}

In this section, we discuss several properties of ornamentation lattices. 

\begin{proposition}
    $\OO(\BB)$ is atomic if and only if $\BB|_{i}$ is atomic for each $i \in \IndexingSet$.
\end{proposition}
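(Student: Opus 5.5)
The plan is to first identify the atoms of $\OO(\BB)$ explicitly, and then to compute joins of atoms using the explicit join construction $\nu$ built from $\sigma$, $D(\sigma)$ and $\pi$ above.

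\emph{Step 1: the atoms of $\OO(\BB)$.} I claim they are exactly the ornamentations $\rho_{(T,i)}$ of \cref{rmk:isomorphic_copy} with $(T,i)$ an atom of $\BB|_i$.
\begin{enumerate}
    \item Each $\rho_{(S,i)}$ is an ornamentation. If $j \in S$ with $j \neq i$, then $\rho_{(S,i)}(j) = \{j\} \subseteq S$. Also $\rho_{(S,i)}(i) = S$, so transitive closure holds.
    \item Let $\alpha$ be an atom of $\OO(\BB)$. Since $\alpha$ is not the minimum, some $i$ has $\alpha(i) \neq \{i\}$. Then $\hat 0 \prec \rho_{(\alpha(i),i)} \preceq \alpha$, which forces $\alpha = \rho_{(\alpha(i),i)}$.
    \item By \cref{rmk:isomorphic_copy}, the elements $\rho_{(S,i)}$ with $(S,i) \in \BB|_i$ form a principal order ideal isomorphic to $\BB|_i$. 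Hence $\rho_{(T,i)}$ covers $\hat 0$ in $\OO(\BB)$ if and only if $(T,i)$ covers $(\{i\},i)$ in $\BB|_i$.
\end{enumerate}

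\emph{Step 2: joins of atoms.} I compute these with the construction of $\nu$. Fix $\rho \in \OO(\BB)$. Suppose $\Omega$ is a family of atoms $\rho_{(T,k)}$, each lying below $\rho$, and suppose that for every $i$ the union of the $T$ pointed at $i$ in $\Omega$ is either $\rho(i)$ or empty, with $\rho(i)=\{i\}$ in the empty case. Then the function $\sigma$ coincides with $\rho$. Since $\rho$ is transitively closed, every directed path in $D(\sigma)$ starting at $i$ stays inside $\rho(i)$. Hence $\pi(i) \subseteq \rho(i)$, and so
$$\nu(i) = \bigcup_{j\in\pi(i)} \rho(j) \subseteq \rho(i).$$
The reverse inclusion is clear because $i \in \pi(i)$. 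Therefore $\bigjoin \Omega = \rho$.

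\emph{Reverse direction.} Assume each $\BB|_i$ is atomic. Joins in $\BB|_i$ are unions by axiom (3), so each $\rho(i) \neq \{i\}$ is the union of the atoms $T$ of $\BB|_i$ contained in it. Take $\Omega$ to be the set of all $\rho_{(T,i)}$ with $T$ such an atom. Each of these is $\preceq \rho$, since $T \subseteq \rho(i)$ and every other coordinate is a singleton. Step 2 then gives $\rho = \bigjoin \Omega$, so $\OO(\BB)$ is atomic.

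\emph{Forward direction.} Assume $\OO(\BB)$ is atomic and let $(S,i) \in \BB|_i$. Write $\rho_{(S,i)}$ as a join of atoms. Every atom below $\rho_{(S,i)}$ lies in the principal ideal of \cref{rmk:isomorphic_copy}. By Step 1, each such atom is therefore $\rho_{(T,i)}$ for an atom $T \subseteq S$ of $\BB|_i$.

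Now compute the join with the recipe: $\sigma(i)$ is the union $U$ of these $T$, and every other coordinate is a singleton. Then $\pi(i) = U \cup \{i\}$ and $\nu(i) = U$. Comparing with $\rho_{(S,i)}$ gives $S = U$ (when there are no atoms, $S = \{i\}$). So $S$ is the join in $\BB|_i$ of atoms, and $\BB|_i$ is atomic.

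The only real obstacle is making sure that joins in $\OO(\BB)$ of elements of the embedded copy of $\BB|_i$ agree with joins in $\BB|_i$. This is exactly the direct $\nu$ computation above, which uses axiom (3).
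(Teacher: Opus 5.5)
Your proof is correct and follows essentially the same route as the paper. In the reverse direction, the paper writes $\rho = \bigjoin_i \rho_{(\rho(i),i)}$ and expands each piece into atoms of the embedded copy of $\BB|_i$, which is what your single join over all atoms below $\rho$ does; in the forward direction, the paper simply observes that $\BB|_i$ sits in $\OO(\BB)$ as a principal order ideal. Your explicit $\nu$ computations are harmless but unnecessary, because joins inside a principal ideal of a complete lattice automatically agree with joins in the ambient lattice, and this already resolves the ``obstacle'' you flag at the end.
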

\begin{proof}
Suppose each $\BB|_i$ is atomic. Let $\rho \in \OO(\BB)$ and for $i \in \IndexingSet$, let $\rho_i : \IndexingSet \to \BB$ be defined by $$\rho_i(j) = \begin{cases} \rho(j) & j = i \\ (\{j\}, j) & j \neq i.\end{cases}$$ Clearly $\rho = \bigvee\limits_{i \in \IndexingSet} \rho_i$, and as each $\BB|_i$ is atomic, each $\rho_i$ is a join of atoms.

Conversely, suppose $\OO(\BB)$ is atomic. Then $\BB|_i$ is atomic as it is an order ideal in $\OO(\BB)$.
\end{proof}

\begin{lemma}
$\rho \in \OO(\BB)$ is join-irreducible if and only if 
\begin{itemize}
    \item $\rho = \rho_{(S,i)}$ for some $(S, i) \in \BB$ and
    \item $(S,i)$ is join-irreducible in $\BB|_{i}$.
\end{itemize}
    
\end{lemma}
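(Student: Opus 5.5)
The plan is to prove both directions using the decomposition $\rho = \bigjoin_{i \in \IndexingSet} \rho_i$ from the proof of the atomicity proposition. Here $\rho_i = \rho_{(\rho(i), i)}$ in the notation of \cref{rmk:isomorphic_copy}. We also use the fact from that remark that $\BB|_i$ embeds as a principal order ideal of $\OO(\BB)$.

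For the forward direction, suppose $\rho$ is join-irreducible. The equality $\rho = \bigjoin_i \rho_i$ is not finite when $\IndexingSet$ is infinite, so I would not use it directly. Instead, I would argue that $\rho$ has only one index $i$ with $\rho(i) \neq (\{i\},i)$.

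Suppose there are two such indices, $i \neq i'$. Let $\alpha$ be the ornamentation that agrees with $\rho$ except at $i$, where it is set to the singleton. Let $\beta$ be defined the same way using $i'$.
\begin{itemize}
\item \textbf{$\alpha$ is an ornamentation.} Transitivity could only fail if $i \in \rho(k)$ for some $k \neq i$. In that case $\alpha(i) = \{i\} \subseteq \rho(k)$ still holds. Also, $\alpha(i)$ contains only $i$, so nothing new is required of it.
\item \textbf{$\rho = \alpha \join \beta$.} The pointwise union $\sigma$ of $\alpha$ and $\beta$ already equals $\rho$. Since $\rho$ is an ornamentation, the closure $\nu$ equals $\rho$.
\end{itemize}
We have $\alpha, \beta \neq \rho$, which contradicts join-irreducibility. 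If $\rho(i)$ is a singleton for every $i$, then $\rho$ is the bottom element, which is the empty join and hence excluded; I would check that the paper's convention makes this case consistent. So $\rho = \rho_{(S,i)}$ with $S = \rho(i)$.

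Next, suppose $(S,i) = \bigjoin \Omega'$ in $\BB|_i$ for some finite $\Omega'$. The embedding $\BB|_i \hookrightarrow \OO(\BB)$ onto a principal order ideal preserves joins, because joins of elements of a down-set computed in the ambient lattice stay below the generator. This should be checked directly: for $\rho_{(T_k,i)}$, the function $\sigma$ is $\bigcup_k T_k$ at $i$ and singletons elsewhere. Its closure $\nu(i)$ is the union over all vertices reachable from $i$, and those vertices have singleton $\sigma$, so $\nu(i) = \bigcup_k T_k$. This is the join in $\BB|_i$, since $\BB|_i$ is union-closed. Join-irreducibility of $\rho$ therefore forces $(S,i) \in \Omega'$.

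For the converse, let $\rho = \rho_{(S,i)}$ with $(S,i)$ join-irreducible, and suppose $\rho = \bigjoin \Omega$ for a finite $\Omega$. Every $\tau \in \Omega$ satisfies $\tau \preceq \rho$, so it lies in the principal ideal, and therefore $\tau = \rho_{(T_\tau, i)}$. By the join computation above, $S = \bigcup_\tau T_\tau$ is a join in $\BB|_i$. Irreducibility then gives $S = T_\tau$ for some $\tau$, so $\rho \in \Omega$.

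The main obstacle is verifying that the two-coordinate splitting in the forward direction gives genuine ornamentations, together with the precise join computation. Both are handled by the observation that resetting a coordinate to a singleton never breaks transitive closure.
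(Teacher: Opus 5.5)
Your proof is correct and follows the paper's argument. The paper also resets two non-singleton coordinates $i \neq j$ to singletons to write $\rho = \rho^i \join \rho^j$, and calls the remaining steps automatic (transferring join-irreducibility through the copy of $\BB|_i$, a principal order ideal whose joins are pointwise unions); you verify these explicitly. One small correction: under the paper's definition, which uses finite \emph{non-empty} $\Omega$, the bottom element counts as join-irreducible rather than being excluded. This is harmless, since the bottom equals $\rho_{(\{i\},i)}$ and $(\{i\},i)$ is join-irreducible in $\BB|_i$ under the same convention.
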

\begin{proof}
It is automatic that if $(S,i)$ is join-irreducible in $\BB|_{i}$, then $\rho_{(S,i)}$ is join-irreducible in $\OO(\BB)$. 

Now suppose that $\rho \in \OO(\BB)$ is join-irreducible. First we show that $\rho = \rho_{(S,i)}$ for some $(S,i) \in \BB$. Suppose not. Then there exist $i, j \in \IndexingSet$ such that $|\rho(i)|, |\rho(j)| \ge 2$. Then define $\rho^i \in \OO(\BB)$ by $$\rho^i(k) = \begin{cases} (\{i\}, i) & k = i \\ \rho(k) & k \neq i \end{cases}$$ and similarly for $\rho^j$. Then we have $\rho = \rho^i \join \rho^j$, so $\rho$ is not join-irreducible.

Then $\rho = \rho_{(S,i)}$ for some $(S,i) \in \BB$ and it is automatic that $(S,i)$ must be join-irreducible in $\BB|_{i}$.

\end{proof}

\begin{definition}
    A pointed building set $\BB$ is \emph{acyclic} if whenever $(S, i), (T, j) \in \BB$ such that $i \in T \tand j \in S$ then $i = j$.
\end{definition}

\begin{remark}
    The digraphical pointed building set of an acyclic directed graph is acyclic.
\end{remark}

\begin{lemma}
\label{lem:cover_relations}
Suppose $\BB$ is finite and acyclic, and let \(\rho, \sigma \in \OO(\BB)\) such that \(\rho \precdot \sigma\). Then there exists a unique \(i \in \IndexingSet\) such that \(\rho(i) \subsetneq \sigma(i)\) and for all other \(j \in \IndexingSet, \rho(j) = \sigma(j)\). 
Furthermore, at least one of the following two holds: \begin{itemize} 
\item there exists $(S, i) \in \BB|_i$  such that $\sigma(i) = \left(\bigcup\limits_{j \in S} \rho(j),\ i\right)$
\item \(\rho(i) \precdot \sigma(i)\) in \(\BB|_{i}\). 
\end{itemize} 
\end{lemma}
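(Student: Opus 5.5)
The plan is to use acyclicity to turn the relation ``$j \in \sigma(i)$'' into a partial order on $\IndexingSet$, and then to build an intermediate ornamentation between $\rho$ and $\sigma$ that differs from $\rho$ at only one index. Write $k \trianglelefteq i$ if $k \in \sigma(i)$.
\begin{itemize}
\item Reflexivity holds because $i \in \sigma(i)$.
\item Transitivity holds because $\sigma$ is an ornamentation: if $k \in \sigma(j)$ and $j \in \sigma(i)$, then $k \in \sigma(j) \subseteq \sigma(i)$.
\item Antisymmetry holds because $\BB$ is acyclic: $k \in \sigma(i)$ and $i \in \sigma(k)$, with $(\sigma(i), i), (\sigma(k), k) \in \BB$, force $k = i$.
\end{itemize}
Let $\mathcal D := \{i \in \IndexingSet \mid \rho(i) \neq \sigma(i)\}$. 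This set is non-empty because $\rho \prec \sigma$, and finite because $\BB$ is finite. So we may choose $i \in \mathcal D$ maximal for $\trianglelefteq$, meaning there is no $k \in \mathcal D \setminus \{i\}$ with $i \in \sigma(k)$.

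Next define $\tau$ by $\tau(i) := \sigma(i)$ and $\tau(k) := \rho(k)$ for $k \neq i$. I will check that $\tau$ is an ornamentation. Each $\tau(k)$ lies in $\BB|_k$, so only transitive closure needs checking, and the only pairs not already handled by $\rho$ or $\sigma$ involve $i$.
\begin{itemize}
\item If $j \in \tau(i) = \sigma(i)$ with $j \neq i$, then $\tau(j) = \rho(j) \subseteq \sigma(j) \subseteq \sigma(i) = \tau(i)$.
\item If $k \neq i$ and $i \in \tau(k) = \rho(k)$, then $i \in \sigma(k)$. By maximality of $i$, this gives $k \notin \mathcal D$, so $\rho(k) = \sigma(k) \supseteq \sigma(i)$, i.e.\ $\tau(i) \subseteq \tau(k)$.
\end{itemize}
Hence $\rho \prec \tau \preceq \sigma$. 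The cover relation $\rho \precdot \sigma$ forces $\tau = \sigma$. Therefore $\mathcal D = \{i\}$, which gives the existence and uniqueness of $i$.

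For the dichotomy, note that for any $(S, i) \in \BB|_i$ with $S \subseteq \sigma(i)$ we have $\bigcup_{j \in S} \rho(j) \subseteq \bigcup_{j \in S} \sigma(j) \subseteq \sigma(i)$. Taking $S = \sigma(i)$, the reverse inclusion holds because $j \in \rho(j)$ for every $j$. Thus $\sigma(i) = \bigl(\bigcup_{j \in \sigma(i)} \rho(j),\ i\bigr)$, so the first bullet holds with $(S, i) = \sigma(i) \in \BB|_i$. Read literally, the dichotomy is therefore automatic once uniqueness is established.

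I expect the main obstacle to be choosing the right extremal index: choosing $i$ minimal instead breaks the second transitivity check. I would also consider the more informative version that the statement presumably intends, namely that either $\rho(i) \precdot \sigma(i)$ in $\BB|_i$ or $\sigma(i)$ is obtained from a strictly smaller set. For that version the argument would go as follows.
\begin{itemize}
\item If $\rho(i) \not\precdot \sigma(i)$, pick $(T, i) \in \BB|_i$ with $\rho(i) \subsetneq T \subsetneq \sigma(i)$.
\item Form the transitive closure $T' := \bigl(\bigcup_{j \in T} \rho(j),\ i\bigr) \in \BB$, using axioms (2) and (3), where $\rho(j)$ is the value at $j$ of the ornamentation $\rho$.
\item Replacing $\rho(i)$ by $T'$ gives an ornamentation strictly between $\rho$ and $\sigma$, unless $T' = \sigma(i)$.
\item By the cover relation $\rho \precdot \sigma$, it must be that $\sigma(i) = T'$, with $S = T \subsetneq \sigma(i)$.
\end{itemize}
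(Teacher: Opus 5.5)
Your proposal is correct. For the first part it is the paper's argument: choosing $i\in\mathcal D$ maximal for $\trianglelefteq$ is the same as the paper choosing $i$ minimal for its order $\preceq_X$, since both say no other $k$ in the difference set has $i\in\sigma(k)$. The modified ornamentation is also the same. You conclude $\tau=\sigma$ directly where the paper assumes $|X|\ge 2$ and derives a contradiction, which is only a cosmetic difference.

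For the dichotomy, your observation is right. As literally stated, the first bullet always holds with $S=\sigma(i)$, and this holds for any $\rho\preceq\sigma$, not only covers. The paper's proof actually establishes the stronger form: if $\rho(i)\not\precdot\sigma(i)$, then $\sigma(i)=\bigl(\bigcup_{j\in S}\rho(j),\, i\bigr)$ for some $S$ with $\rho(i)\subsetneq S\subsetneq\sigma(i)$. It does so by exactly the construction in your final sketch.

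When you write that sketch out, check the modified function at indices $k\neq i$ with $i\in\rho(k)$. This needs $T'\subseteq\sigma(i)\subseteq\sigma(k)=\rho(k)$, so it relies on the uniqueness part you have already proved.
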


\begin{proof}
Let $\rho \precdot \sigma$. By definition, there must exist $i \in \IndexingSet$ such that $\rho(i) \subsetneq \sigma(i)$. Now let $$X := \{i \in \IndexingSet \mid \rho(i) \subsetneq \sigma(i)\}.$$ Suppose by way of contradiction that $|X| \ge 2$. Define a partial order on $X$ where $i \preceq_X j$ if $j \in \sigma(i)$, and pick $i \in X$ minimal with respect to $\preceq_X$. We note that $\preceq_X$ is a partial order as $\BB$ is acyclic and has a minimal element as $\IndexingSet$ is finite. 
Then define $\pi \in \OO(\BB)$ by $$\pi(k) = \begin{cases}
    \sigma(k) & k = i \\
    \rho(k) & k \neq i.
\end{cases}$$ 
We claim that $\pi$ is an ornamentation. If this holds, we will have that $\rho \prec \pi \prec \sigma$, contradicting that $\rho \precdot \sigma$. The only non-trivial condition to check is transitive closure when $k \neq i$ and $i \in \pi(k) = \rho(k)$. We claim that $\rho(k) = \sigma(k)$. Otherwise, $k \in X$ and
$i \in \rho(k) \subseteq \sigma(k)$, so $k \preceq_X i$.
By minimality of $i \in X$, this forces $k \notin X$, a contradiction, so $\rho(k) = \sigma(k)$.
Since $\sigma$ is an ornamentation and $i \in \sigma(k)$, we have
\[
\sigma(i) \subseteq \sigma(k) = \rho(k) = \pi(k),
\]
so the closure condition holds.

Now suppose that $\rho(i) \not\precdot \sigma(i)$ in $\BB|_i$. Then there exists $(S, i) \in \BB|_i$ such that $\rho(i) \subsetneq (S,i) \subsetneq \sigma(i)$. Let $$(T,i) = \left(\bigcup\limits_{j \in S} \rho(j),\ i\right).$$ Observe that for each $j \in S$, we have $\left(\rho(j) \cup S, i\right) \in \BB|_{i}$, so $$(T,i) = \bigcup\limits_{j \in S} (\rho(j) \cup (S, i)) \in \BB|_i.$$
Define $\tau: \IndexingSet \to \BB$ by $$\tau(j) = \begin{cases}
    (T,i) & j = i \\
    \rho(j) & j \neq i.
\end{cases}$$

As $(S,i) \subseteq \sigma(i)$ and for each $j \in S, \rho(j) \subseteq \sigma(i)$ we have $(T,i) \subseteq \sigma(i)$. We claim that $\tau \in \OO(\BB)$. It suffices to check that 
\begin{itemize}
    \item if $j \in \IndexingSet - \{i\}$ and $i \in \rho(j)$, then $(T,i) \subseteq \rho(j)$ and
    \item if $j \in \IndexingSet$ and $j \in (T,i)$, then $\rho(j) \subseteq (T,i)$.
\end{itemize}

In the first case, suppose that $j \in \IndexingSet - \{i\}$ and $i \in \rho(j)$. Then $$(T,i) \subseteq \sigma(i)  \subseteq \sigma(j) = \rho(j).$$ 
In the second case, suppose that $j \in (T,i)$. Then there exists $k \in S$ such that $j \in \rho(k)$. Hence $\rho(j) \subseteq \rho(k) \subseteq (T,i)$.

Hence $\rho \prec \tau \preceq \sigma$ and as $\rho \precdot \sigma$ we find that $\tau = \sigma$.

\end{proof}

\begin{remark}
    If $\BB$ is not assumed to be acyclic, then the previous lemma does not hold. One may observe this in the graphical pointed building set of $K_3$. The ornamentation 
    $$\begin{aligned}
    \rho(1) &= (\{1, 2\}, 1), 
    \\\rho(2) &= (\{1, 2\}, 2), 
    \\\rho(3) &= (\{1, 2, 3\}, 3)
    \end{aligned}$$ is covered by 
    $$\begin{aligned} \sigma(1) &= (\{1, 2, 3\}, 1), 
    \\ \sigma(2) &= (\{1, 2, 3\}, 2), 
    \\ \sigma(3) &= (\{1, 2, 3\}, 3).\end{aligned}$$
\end{remark}

The following lemma refines the \cref{lem:cover_relations} for a large class of natural pointed building sets including both graphical and digraphical pointed building sets.

\begin{lemma} \label{lem:covering_relation_when_nice_building_set}

Suppose $\BB$ is a finite, acyclic pointed building set over $\IndexingSet$ such that for all $i \in \IndexingSet$, if $(S,i) \precdot (T,i)$ in $\BB|_i$ then $|T-S| = 1$. Let $\rho \precdot \sigma$ in $\OO(\BB)$ and let $i \in \IndexingSet$ such that $\rho(i) \subsetneq \sigma(i)$. Then there exists $j \in \IndexingSet$ such that $\sigma(i) = \rho(i) \cup \rho(j)$.
\end{lemma}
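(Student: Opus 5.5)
The plan is to combine \cref{lem:cover_relations} with the covering hypothesis on each $\BB|_i$, using one uniform construction rather than treating the two cases of that lemma separately. By \cref{lem:cover_relations}, $i$ is the unique index where $\rho$ and $\sigma$ differ, so $\rho(k)=\sigma(k)$ for all $k\neq i$.

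\textbf{Step 1: choose a cover inside $\sigma(i)$.} Since $\BB$ is finite and $\rho(i)\subsetneq\sigma(i)$ in $\BB|_i$, there is some $(S',i)\in\BB|_i$ with $\rho(i)\precdot S'$ and $S'\subseteq\sigma(i)$. Take any minimal element of $\{S\in\BB|_i : \rho(i)\subsetneq S\subseteq \sigma(i)\}$; this set is nonempty because it contains $\sigma(i)$. By hypothesis $|S'-\rho(i)|=1$, so $S'=\rho(i)\cup\{j\}$ for some $j\notin\rho(i)$. In particular $j\neq i$.

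\textbf{Step 2: the candidate set.} Put $T:=\rho(i)\cup\rho(j)$. We have $(S',i)\in\BB$ and $(\rho(j),j)\in\BB$ with $j\in S'$. Transitive closure then gives $(S'\cup\rho(j),i)\in\BB$, and $S'\cup\rho(j)=T$ since $j\in\rho(j)$. For containment in $\sigma(i)$, note $j\in S'\subseteq\sigma(i)$ and $j\neq i$. Hence $\rho(j)=\sigma(j)\subseteq\sigma(i)$, so $\rho(i)\subsetneq T\subseteq\sigma(i)$.

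\textbf{Step 3: an intermediate ornamentation.} Define $\tau$ by $\tau(i)=(T,i)$ and $\tau(k)=\rho(k)$ for $k\neq i$. We verify transitive closure exactly as for the map $\tau$ in the proof of \cref{lem:cover_relations}.
\begin{itemize}
\item If $k\neq i$ and $i\in\rho(k)$, then $T\subseteq\sigma(i)\subseteq\sigma(k)=\rho(k)$.
\item If $k\in T$, then either $k\in\rho(i)$, giving $\rho(k)\subseteq\rho(i)\subseteq T$, or $k\in\rho(j)$, giving $\rho(k)\subseteq\rho(j)\subseteq T$. This also covers $k=i$, since $\tau(i)=T$ trivially contains itself.
\end{itemize}
Thus $\rho\prec\tau\preceq\sigma$, and the cover relation $\rho\precdot\sigma$ forces $\tau=\sigma$. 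Therefore $\sigma(i)=\rho(i)\cup\rho(j)$.

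\textbf{Main obstacle.} The delicate point is the ornamentation check in Step 3, specifically the case $k\neq i$ with $i\in\rho(k)$. It relies on $\sigma$ agreeing with $\rho$ away from $i$, which is exactly where acyclicity enters through \cref{lem:cover_relations}. The remaining steps are routine applications of the axioms of a pointed building set.
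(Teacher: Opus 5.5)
Your proof is correct and follows the paper's argument essentially step for step. You pick a cover $\rho(i)\cup\{j\}$ of $\rho(i)$ lying inside $\sigma(i)$, define $\tau$ by $\tau(i)=\rho(i)\cup\rho(j)$, check that $\tau$ is an ornamentation with $\rho\prec\tau\preceq\sigma$, and conclude $\tau=\sigma$. You also make explicit two points the paper leaves implicit: the appeal to \cref{lem:cover_relations} for $\rho(k)=\sigma(k)$ when $k\neq i$, and the finiteness argument that produces the cover inside $\sigma(i)$.
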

\begin{proof}
Let $\rho \precdot \sigma$ and let $i \in \IndexingSet$ such that $\rho(i) \subsetneq \sigma(i)$. By the assumptions, there exists $(S,i) \in \BB|_i$ such that $\rho(i) \subsetneq (S,i) \subseteq \sigma(i)$ and $S = \rho(i) \cup \{j\}$ for some $j \in \IndexingSet$. Define $\tau: \IndexingSet \to \BB$ by $$\tau(k) = \begin{cases}
    \rho(k) & k \neq i \\
    (\rho(i) \cup \rho(j),\, i) & k = i
\end{cases}$$

We claim that $\tau \in \OO(\BB)$. First observe that $(\rho(i) \cup \rho(j), i) = (S \cup \rho(j), i) \in \BB|_i$ by (2) of \cref{def:pointed_building_set}. It suffices to show \begin{itemize}
    \item if $k \in \IndexingSet - \{i\}$ and $i \in \tau(k)$, then $\tau(i) \subseteq \tau(k)$ and
    \item if $k \in \IndexingSet-\{i\}$ and $k \in \tau(i)$, then $\tau(k) \subseteq \tau(i)$.
\end{itemize}
Observe that $\tau(i) \subseteq \sigma(i)$ as $j \in S \subseteq \sigma(i)$.

In the first case, if $i \in \tau(k)$ then $$\tau(i) \subseteq \sigma(i) \subseteq \sigma(k) = \rho(k) = \tau(k).$$ 
In the second case, if $k \in \tau(i)$, then $k \in \rho(i)$ (and $\rho(k) \subseteq \rho(i)$) or $k \in \rho(j)$ (and $\rho(k)\subseteq \rho(j)$). Therefore $\tau(k) = \rho(k) \subseteq \tau(i).$

Hence $\rho \prec \tau \preceq \sigma$ and as $\rho \precdot \sigma$, we conclude that $\tau = \sigma$.

\end{proof}

We will make heavy use of the following classical result.
\begin{proposition}[{\cite{Free}*{Theorem 2.56}}]\label{prop:SemiDistributiveLemma}
Let $L$ be a finite lattice. Then:
\begin{itemize}
\item $L$ is join-semidistributive if and only if for every cover relation $x\precdot y$ in $L$, the set $\Delta_L(y)-\Delta_L(x)$ has a unique minimal element. 
\item $L$ is meet-semidistributive if and only if for every cover relation $x\precdot y$ in $L$, the set $\nabla_L(x)-\nabla_L(y)$ has a unique maximal element. 
\end{itemize}
\end{proposition}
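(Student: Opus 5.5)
The plan is to prove the first bullet directly and obtain the second by applying the first to the dual lattice $L^\opp$. Passing to $L^\opp$ swaps meet-semidistributivity with join-semidistributivity, turns a cover $x\precdot y$ into $y\precdot_\opp x$, and turns $\nabla_L(x)-\nabla_L(y)$ with its maximal elements into $\Delta_{L^\opp}(x)-\Delta_{L^\opp}(y)$ with its minimal elements. Throughout I use that $L$ is finite, so every element of a nonempty subset lies above some minimal element of that subset, and every strict inequality can be refined to a cover.

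\emph{($\Rightarrow$)} Assume $L$ is join-semidistributive and $x\precdot y$. Put $A:=\Delta_L(y)-\Delta_L(x)$; it is nonempty since it contains $y$. The key step is to show that $x\join z=y$ for every $z\in A$. Indeed, $x\prec x\join z\preceq y$, and since $y$ covers $x$ this forces $x\join z=y$. Now write $A=\{z_1,\dots,z_r\}$. From $x\join z_1=x\join z_2=y$, join-semidistributivity gives $x\join(z_1\meet z_2)=y$. Inducting on $r$ then gives $x\join m=y$ for $m:=\bigmeet A$. In particular $m\not\preceq x$ (otherwise $x\join m=x\ne y$), and clearly $m\preceq y$, so $m\in A$. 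Thus $m$ is the minimum of $A$, and hence its unique minimal element.

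\emph{($\Leftarrow$)} Assume the cover condition holds, and suppose for contradiction that $x\join y=x\join z=:w$ but $c:=x\join(y\meet z)\prec w$. Then $y\not\preceq c$, since otherwise $c\succeq x\join y=w$. The main step is to manufacture a suitable cover. Choose $p$ maximal among the elements $t$ with $c\preceq t$ and $y\not\preceq t$; this set is nonempty because it contains $c$. By maximality, every element strictly above $p$ is $\succeq y$, so $p$ has the unique upper cover $q:=p\join y$. Since $p\succeq c\succeq x$, we get $q\succeq x\join y=w\succeq z$. On the other hand $z\not\preceq p$, since otherwise $p\succeq x\join z=w\succeq y$. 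Hence both $y$ and $z$ lie in $\Delta_L(q)-\Delta_L(p)$.

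By hypothesis this set has a unique minimal element $k$. By finiteness every element of the set lies above some minimal element, so $k\preceq y$ and $k\preceq z$. Therefore $k\preceq y\meet z\preceq c\preceq p$, contradicting $k\notin\Delta_L(p)$. So $L$ is join-semidistributive.

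I expect the main obstacle to be this choice of cover in the ($\Leftarrow$) direction. Taking an arbitrary cover $c\precdot v\preceq w$ does not work, because $y$ and $z$ need not lie below $v$. The trick is to pick $p$ maximal subject to $c\preceq p$ and $y\not\preceq p$, so that $p$ is covered only by $p\join y$; this is what puts both $y$ and $z$ into the same set $\Delta_L(q)-\Delta_L(p)$.
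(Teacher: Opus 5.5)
Your proof is correct. The paper gives no argument for this proposition: it is quoted from Theorem 2.56 of \emph{Free Lattices} (Freese, Je\v{z}ek, Nation), so there is no in-paper proof to compare against. Your write-up is a valid self-contained substitute.

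In ($\Rightarrow$), the cover $x\precdot y$ forces $x\vee z=y$ for every $z\in\Delta_L(y)-\Delta_L(x)$. Iterating join-semidistributivity over this finite set then shows that $\bigwedge\bigl(\Delta_L(y)-\Delta_L(x)\bigr)$ lies in the set. So you actually obtain a \emph{minimum}, which is exactly the form in which the paper applies the criterion: it exhibits explicit extremal elements $\mu_\downarrow$ and $\mu_\uparrow$.

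In ($\Leftarrow$), choosing $p$ maximal with $c\preceq p$ and $y\not\preceq p$ does force $p\precdot q:=p\vee y$. Both $y$ and $z$ then land in $\Delta_L(q)-\Delta_L(p)$. By finiteness the unique minimal element is a minimum, so it lies below $y\wedge z\preceq c\preceq p$, which is the desired contradiction.

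The reduction of the second bullet to the first via $L^{\opp}$ is also set up correctly:
\begin{itemize}
\item $\nabla_L=\Delta_{L^{\opp}}$;
\item covers reverse;
\item maximal elements become minimal elements;
\item meet-semidistributivity of $L$ is join-semidistributivity of $L^{\opp}$.
\end{itemize}
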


\begin{theorem}
\label{thm:chain_implies_semidistributive}
    Suppose that $\BB$ is a finite, acyclic pointed building set and that for all $i \in \IndexingSet$, $\BB|_i$ is a chain. Then $\OO(\BB)$ is semidistributive.
\end{theorem}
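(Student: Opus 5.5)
We will use \cref{prop:SemiDistributiveLemma}. Since $\BB$ is finite, $\OO(\BB)$ is a finite lattice, so it suffices to show two things. For every cover $\rho \precdot \sigma$, the set $\Delta(\sigma) - \Delta(\rho)$ has a unique minimal element. Dually, the set $\nabla(\rho) - \nabla(\sigma)$ has a unique maximal element.

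By \cref{lem:cover_relations} there is a unique $i$ with $\rho(i) \subsetneq \sigma(i)$, and $\rho(j)=\sigma(j)$ for all $j\neq i$. Because $\BB|_i$ is a chain, the sets in $\BB|_i$ are totally ordered by inclusion. The key observation is the following. For an ornamentation $\tau$, we have $\tau \not\preceq \rho$ iff there is some $k$ with $\tau(k)\not\subseteq \rho(k)$. We aim to reduce every such witness to the coordinate $i$.

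\textbf{Join-semidistributivity.} Let $\tau \preceq \sigma$ with $\tau\not\preceq\rho$. Then $\tau(k)\subseteq\sigma(k)=\rho(k)$ for all $k\ne i$, so necessarily $\tau(i)\not\subseteq\rho(i)$. As $\BB|_i$ is a chain, $\tau(i)\supsetneq \rho(i)$, i.e.\ $\tau(i)$ strictly contains $\rho(i)$ inside the chain. Let $(S,i)$ be the successor of $\rho(i)$ in the chain $\BB|_i$; then $S\subseteq \tau(i)$.

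The candidate minimum is $\mu := \rho_{(S,i)}$ from \cref{rmk:isomorphic_copy}, possibly closed up via joins. Concretely, take $\mu := \bigvee\{\tau \preceq \sigma : \tau(i)\supseteq S\}$ replaced by the meet $\mu:=\bigwedge\{\tau\preceq\sigma : \tau(i)\supseteq S\}$. By the pointwise description of meets in the proof of \cref{thm:complete_lattice}, $\mu(i)$ is the largest element of the chain $\BB|_i$ contained in the intersection. Since every $\tau(i)$ contains $S$ and $S\in\BB|_i$, we get $\mu(i)\supseteq S$. Hence $\mu\in\Delta(\sigma)-\Delta(\rho)$, and $\mu\preceq\tau$ for every $\tau$ in that set, so $\mu$ is the unique minimum. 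The chain hypothesis is exactly what makes ``$\tau(i)\not\subseteq\rho(i)$'' equivalent to the upward-closed condition ``$\tau(i)\supseteq S$''.

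\textbf{Meet-semidistributivity.} Let $\tau\succeq\rho$ with $\tau\not\succeq\sigma$. We need a witness coordinate. If $\sigma(k)\not\subseteq\tau(k)$ for some $k\neq i$, then since $\sigma(k)=\rho(k)\subseteq\tau(k)$ this is impossible. So $\sigma(i)\not\subseteq\tau(i)$, and by the chain property $\tau(i)\subsetneq\sigma(i)$. Together with $\tau(i) \supseteq \rho(i)$, this means $\tau(i)$ lies in the interval $[\rho(i),\sigma(i))$ of the chain $\BB|_i$.

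The candidate maximum is $\bigvee\{\tau\succeq\rho : \tau(i)\subsetneq\sigma(i)\}$, and the main obstacle is here. Joins are not pointwise: the construction via $D(\sigma)$ and $\nu$ can enlarge coordinate $i$ through paths. So we must show that the join $\nu$ of such $\tau$ still satisfies $\nu(i)\subsetneq\sigma(i)$.

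We plan to handle this using acyclicity and finiteness. Let $P$ be the largest element of the chain $\BB|_i$ strictly below $\sigma(i)$; note $P\supseteq\rho(i)$. Consider $\nu$, the largest ornamentation $\succeq \rho$ with $\nu(i)\subseteq P$, and try to construct it directly. Define $\nu(k)$ as the largest element of $\BB|_k$ such that, whenever $i\in\nu(k)$, we still have $\nu(i) \subseteq P$. In other words, coordinates whose sets contain $i$ are capped so as not to force growth of coordinate $i$.

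Since the chain hypothesis makes ``$\nu(i)\subseteq P$'' a downward-closed condition, it remains to show this family is closed under joins. Any path in $D(\sigma_\Omega)$ starting at $i$ passes only through coordinates $j$ with $\tau(j)\subseteq\tau(i)\subseteq P$ for the relevant $\tau$. Consequently the union $\nu(i)$ is a union of members of $\BB|_i$ each contained in $P$, and these are nested by the chain hypothesis, so $\nu(i) \subseteq P$. Verifying this path argument carefully, in particular that edges out of vertices inside $P$ stay inside $P$, is the step I expect to require the most care.
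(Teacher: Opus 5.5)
Your join-semidistributivity half is correct and is essentially the paper's argument: the meet you take is just $\rho_{(S,i)}$, which is the paper's $\mu_\downarrow$.

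The meet-semidistributivity half has a genuine gap, exactly at the step you flagged. An edge $k\to l$ of $D(\sigma_\Omega)$ with $k\in P$ comes from some $\tau'$ with $l\in\tau'(k)$. Nothing forces $k\in\tau'(i)$, so you cannot conclude $\tau'(k)\subseteq\tau'(i)\subseteq P$; the ``relevant $\tau$'' changes along the path.

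More seriously, in this half you never use $\rho\precdot\sigma$ beyond locating $i$, and without the cover the claim is false. In $\Tam_3$ take $\rho=\hat 0$ and $\sigma=\rho_{([1,3],1)}$, which differ only at $i=1$, so $P=[1,2]$. Both $\rho_{([1,2],1)}$ and $\rho_{([2,3],2)}$ satisfy $\tau\succeq\rho$ and $\tau(1)\subseteq P$. Yet their join has value $[1,3]$ at $1$: the vertex $2\in P$ has an edge to $3\notin P$. So this family has two maximal elements.

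Your attempted direct construction of $\nu$ also aims the constraint the wrong way. The condition $i\in\nu(k)$ only imposes the lower bound $\nu(i)\subseteq\nu(k)$; the coordinates that must be capped are those $k\in\nu(i)$.

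The missing idea is to use the cover to pin down coordinate $i$ exactly. Suppose $\tau\succeq\rho$ and $\tau\not\succeq\sigma$. Then $\rho\preceq\tau\meet\sigma\preceq\sigma$ and $\tau\meet\sigma\neq\sigma$, so $\tau\meet\sigma=\rho$. By the pointwise description of meets and $\tau(i)\subsetneq\sigma(i)$, $(\tau\meet\sigma)(i)$ is the largest member of $\BB|_i$ contained in $\tau(i)\cap\sigma(i)=\tau(i)$, namely $\tau(i)$. Hence $\tau(i)=\rho(i)$, not merely $\tau(i)\subseteq P$.

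Thus $\nabla(\rho)-\nabla(\sigma)=\{\tau\succeq\rho:\tau(i)=\rho(i)\}$, and now your path argument works. For every $\tau'$ in this family and every $k\in\rho(i)=\tau'(i)$, we get $\tau'(k)\subseteq\tau'(i)=\rho(i)$. So all edges of $D(\sigma_\Omega)$ leaving $\rho(i)$ stay in $\rho(i)$, whence $\nu(i)=\rho(i)$ and the join $\nu$ is the unique maximum.

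You cannot instead borrow the paper's explicit $\mu_\uparrow$, which equals $\max\BB|_j$ for $j\neq i$ and the chain-predecessor of $\sigma(i)$ at $i$, because it need not be an ornamentation. In $\Tam_3$ with $\rho(1)=[1,2]$, $\sigma(1)=[1,3]$ and singletons elsewhere, it has $2\in\mu_\uparrow(1)=[1,2]$ but $\mu_\uparrow(2)=[2,3]\not\subseteq[1,2]$. The true maximum of $\nabla(\rho)-\nabla(\sigma)$ there is $\rho$ itself.
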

\begin{proof} 
Let \(\rho \precdot \sigma\) and let \(i \in \IndexingSet\) such that \(\rho(i) \subsetneq \sigma(i)\). We show that \begin{enumerate} 
    \item \(\Delta_{\OO(\BB)}(\sigma)-\Delta_{\OO(\BB)}(\rho)\) has a unique minimal element 
    \item \(\nabla_{\OO(\BB)}(\rho)-\nabla_{\OO(\BB)}(\sigma)\) has a unique maximal element. 
\end{enumerate}

Define \(\mu_\downarrow: \IndexingSet \to \BB \tand \mu_\uparrow: \IndexingSet \to \BB\) by 

$$
\begin{aligned}
\mu_\downarrow(j) &= 
    \begin{cases} 
        (\{j\}, j) & j \neq i \\
        \min\{S \in \BB|_{i} : \rho(i) \subsetneq S\} & j = i \end{cases}
\\\\ \mu_\uparrow(j) &= 
    \begin{cases} 
        \max \BB|_j & j \neq i 
        \\ \max\{S \in \BB|_{i} : S \subsetneq \sigma(i)\} & j = i. \end{cases}
\end{aligned}
$$

Both $\{S \in \BB|_{i} : \rho(i) \subsetneq S\}$ and $\{S \in \BB|_{i} : S \subsetneq \sigma(i)\}$ are non-empty as $\sigma(i)$ is in the first set and $\rho(i)$ is in the second respectively.
As each $\BB|_i$ is a chain, each of these sets has a unique minimum and unique maximum. Hence $\mu_\downarrow$ and $\mu_\uparrow$ are both well-defined. Furthermore, each is an ornamentation. 

The following are straightforward to verify:
\begin{itemize}
    \item  $\mu_\downarrow \preceq \sigma$,
    \item $\mu_\downarrow \not\preceq \rho$,
    \item if $\mu \preceq \sigma$ and $\mu \not\preceq \rho$ then $\mu_\downarrow \preceq \mu$,
    \item $\rho \preceq \mu_\uparrow$,
    \item $\sigma \not\preceq \mu_\uparrow$,
    \item if $\rho \preceq \mu$ and $\sigma \not\preceq \mu$ then $\mu \preceq \mu_\uparrow$.
\end{itemize}

\end{proof}

\begin{corollary}
\label{cor:rooted_tree_towards_root_semidistributive}
Let $\TT$ be a rooted tree with edges directed towards the root and let $\BB(\TT)$ be the digraphical pointed building set of $\TT$. Then $\OO(\BB(\TT))$ is semidistributive.
\end{corollary}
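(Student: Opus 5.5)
The plan is to deduce the corollary directly from \cref{thm:chain_implies_semidistributive}. That theorem needs three things about $\BB(\TT)$: it is finite, it is acyclic, and each $\BB(\TT)|_v$ is a chain under inclusion. Finiteness is automatic, since a rooted tree here is finite. Acyclicity follows from the remark that the digraphical pointed building set of an acyclic directed graph is acyclic. Orienting the edges of a tree toward the root produces no directed cycle, because the underlying graph has no cycles at all.

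The only real content is the chain condition. Fix a vertex $v$. Every vertex of $\TT$ has at most one outgoing edge, namely the edge to its parent, and the root has none. So the only directed walks starting at $v$ follow the unique path $v = v_0, v_1, \dots, v_m$ from $v$ up to the root, where $v_{k+1}$ is the parent of $v_k$.

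Next, take any $v$-connected set $S$. Every $u \in S$ is reached from $v$ by a directed path lying inside $S$, so $u$ is some $v_k$, and all of $v_0, \dots, v_k$ also lie in $S$. Hence $S = \{v_0, \dots, v_k\}$ for some $0 \le k \le m$. Conversely, each such initial segment is $v$-connected.

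It follows that $\BB(\TT)|_v = \{(\{v_0,\dots,v_k\}, v) \mid 0 \le k \le m\}$. This family is totally ordered by inclusion, so it is a chain.

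All hypotheses of \cref{thm:chain_implies_semidistributive} are therefore satisfied, and $\OO(\BB(\TT))$ is semidistributive. I expect no real obstacle. The one point needing care is the observation that out-degree at most one forces every $v$-connected set to be an initial segment of the path from $v$ to the root. That observation is exactly where orienting the edges toward the root is used; with the opposite orientation, $\BB(\TT)|_v$ would contain incomparable subtrees and fail to be a chain.
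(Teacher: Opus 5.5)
Your proposal is correct and takes essentially the paper's approach: the corollary is stated directly after \cref{thm:chain_implies_semidistributive} with no separate proof. The intended argument is the one you give, namely finiteness, acyclicity via the remark on acyclic digraphs, and the fact that out-degree at most one makes each $\BB(\TT)|_v$ the chain of initial segments of the path from $v$ to the root.
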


\section{Digraphical ornamentation lattices}
\label{sec:Digraphical_Ornamentation_Lattices}

In this section, we give special attention to digraphical ornamentation lattices with a special focus on directed trees.

\begin{remark}
\label{rem:MacNeille_completion}
    In~\cite{abram2025ornamentation}, it is shown that if $D$ is an increasing directed tree, then $\OO(\BB(D))$ is isomorphic to the Dedekind-MacNeille completion of the hypergraphic poset where the underlying hypergraph consists of all paths in $D$.
\end{remark}

\begin{lemma}
\label{lem:covering_relation_DAG}
    Let $D = (V,E)$ be a directed acyclic graph and $\BB(D)$ the digraphical pointed building set of $D$. Then if $\rho \precdot \sigma$ in $\OO(\BB(D))$, there exist unique vertices $u, v \in V$ such that $\sigma(u) = \rho(u) \cup \rho(v)$.
\end{lemma}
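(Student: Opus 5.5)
The plan is to reduce the statement to \cref{lem:cover_relations} and \cref{lem:covering_relation_when_nice_building_set}. We then prove uniqueness of $v$ directly. Throughout, $D$ is taken to be finite, as in those lemmas, so $\BB(D)$ is finite. By the remark above, $\BB(D)$ is acyclic because $D$ is acyclic: if $(S,i),(T,j)\in\BB(D)$ with $i\in T$ and $j\in S$, there are directed paths $i\to j$ and $j\to i$, which forces $i=j$.

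\textbf{Uniqueness of $u$.} Let $\rho\precdot\sigma$. By \cref{lem:cover_relations} there is a unique $u\in V$ with $\rho(u)\subsetneq\sigma(u)$, and $\rho(w)=\sigma(w)$ for all $w\neq u$. Any $u$ satisfying $\sigma(u)=\rho(u)\cup\rho(v)$ with $\sigma\neq\rho$ must be a vertex where $\rho$ and $\sigma$ differ. Indeed, if $\rho(u)=\sigma(u)$, the equation holds trivially only for such $v$ with $\rho(v)\subseteq\rho(u)$. So I read the statement as concerning this distinguished $u$, and uniqueness of $u$ is immediate.

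\textbf{Existence of $v$.} To apply \cref{lem:covering_relation_when_nice_building_set}, I check its hypothesis: if $(S,u)\precdot(T,u)$ in $\BB(D)|_u$, then $|T-S|=1$. Take $S\subsetneq T$, both $u$-connected, and pick $x\in T-S$. There is a directed path from $u$ to $x$ inside $T$. Let $w$ be the first vertex on this path not in $S$. Its predecessor $y$ lies in $S$, so $S\cup\{w\}$ is $u$-connected via the path from $u$ to $y$ inside $S$ followed by the edge $(y,w)$. Thus $S\subsetneq S\cup\{w\}\subseteq T$, and a cover forces $T=S\cup\{w\}$. The lemma then gives $v$ with $\sigma(u)=\rho(u)\cup\rho(v)$.

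\textbf{Uniqueness of $v$.} This is the only genuinely new step. Suppose $\sigma(u)=\rho(u)\cup\rho(v)=\rho(u)\cup\rho(v')$.
\begin{enumerate}
\item Neither $v$ nor $v'$ lies in $\rho(u)$. For instance, if $v\in\rho(u)$, then $\rho(v)\subseteq\rho(u)$ by transitive closure of $\rho$, giving $\sigma(u)=\rho(u)$, a contradiction.
\item Since $v\in\rho(v)\subseteq\rho(u)\cup\rho(v')$ and $v\notin\rho(u)$, we get $v\in\rho(v')$. Symmetrically, $v'\in\rho(v)$.
\item Now $(\rho(v),v)$ and $(\rho(v'),v')$ lie in $\BB(D)$, each contains the other's point, and acyclicity of $\BB(D)$ gives $v=v'$.
\end{enumerate}

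The main subtle point is the one-element cover property of $\BB(D)|_u$, which is where the digraph structure enters. The rest is bookkeeping from the earlier lemmas.
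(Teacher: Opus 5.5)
Your proof is correct and is essentially the paper's: existence comes from \cref{lem:covering_relation_when_nice_building_set}, and your argument that $v\in\rho(v')$ and $v'\in\rho(v)$ force $v=v'$ by acyclicity is the same as the paper's observation that $v$ is the unique $\preceq_D$-minimum of $\sigma(u)-\rho(u)$; beyond the paper, you verify that lemma's one-element-cover hypothesis for $\BB(D)$, which the paper only asserts, and you correctly attribute uniqueness of $u$ to \cref{lem:cover_relations}. One small wobble: your claim that any $u$ with $\sigma(u)=\rho(u)\cup\rho(v)$ must be a vertex where $\rho$ and $\sigma$ differ is false as written (take $v=u$ at any vertex where they agree), but you immediately acknowledge these trivial solutions and adopt the correct reading of the statement, which is also the paper's.
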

\begin{proof}
    By \cref{lem:covering_relation_when_nice_building_set}, there exists a unique $u \in V$ and possibly not unique $v \in V$ such that $\sigma(u) = \rho(u) \cup \rho(v)$. We claim that $v$ is the unique vertex with this property. Indeed, observe that for all $w \in \sigma(u) - \rho(u)$, there is a path from $v$ to $w$ in $D$. Furthermore as $v \notin \rho(u)$, $v \in \sigma(u) - \rho(u)$. Then $v$ is the unique minimum element of $\sigma(u)-\rho(u)$, viewed as a poset.
\end{proof}

The following is the main result of this section: 
\begin{theorem}[Directed tree duality]
\label{thm:tree_duality}
    Let $D$ be a directed tree. Then $\OO(\BB(D^\opp)) \simeq \OO(\BB(D))^\opp$. 
\end{theorem}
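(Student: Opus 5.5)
The plan is to build an explicit order-reversing bijection $\Phi:\OO(\BB(D))\to\OO(\BB(D^\opp))$ whose inverse is given by the same recipe applied to $D^\opp$. We will not use complementation of relations: a direct check shows that the complement of an ornamentation's relation, reversed, fails the transitivity axiom.

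\emph{Step 1: encode ornamentations as relations.} Since $D$ is a tree, for $u\preceq_D w$ there is a unique directed path from $u$ to $w$. Write $[u,w]$ for its vertex set and $[u,w)$ for the same set with $w$ removed. A set $S\ni u$ is $u$-connected iff it is closed under taking intermediate vertices of such paths starting at $u$. Hence an ornamentation $\rho$ is the same as a relation $R_\rho=\{(u,w): w\in\rho(u)\}$ on $\preceq_D$-comparable pairs satisfying two conditions:
\begin{itemize}
\item[(a)] if $w\in\rho(u)$ and $x\in[u,w]$, then $x\in\rho(u)$;
\item[(b)] if $x\in\rho(u)$ and $w\in\rho(x)$, then $w\in\rho(u)$.
\end{itemize}
The order $\preceq$ is inclusion of these relations. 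The same description holds for $D^\opp$ with all paths reversed.

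\emph{Step 2: define the map.} For $\rho\in\OO(\BB(D))$, define $\Phi(\rho)$ on $D^\opp$ by declaring, for $u\preceq_D w$,
$$u\in\Phi(\rho)(w)\iff w\notin\rho(x)\text{ for every }x\in[u,w).$$
We then verify the following.
\begin{itemize}
\item Condition (a) for $\Phi(\rho)$: if $u\in\Phi(\rho)(w)$ and $x$ lies on the path, then $[x,w)\subseteq[u,w)$, so $x\in\Phi(\rho)(w)$.
\item Condition (b) for $\Phi(\rho)$: suppose $x\in\Phi(\rho)(w)$, $u\in\Phi(\rho)(x)$, and $w\in\rho(y)$ for some $y\in[u,x)$. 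By (a) for $\rho$ we get $x\in\rho(y)$, a contradiction. Hence $u\in\Phi(\rho)(w)$.
\item $\Phi$ is order-reversing, since enlarging $\rho$ only adds obstructions.
\end{itemize}

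\emph{Step 3: show that $\Phi$ is a bijection.} Let $\Psi$ be the analogous map from $D^\opp$ back to $D$. We check $\Psi\Phi=\mathrm{id}$, and by symmetry $\Phi\Psi=\mathrm{id}$. Unwinding the definitions, $w\in\Psi\Phi(\rho)(u)$ iff for every $x\in(u,w]$ there is some $y\in[u,x)$ with $x\in\rho(y)$.
\begin{itemize}
\item If $w\in\rho(u)$, take $y=u$; condition (a) gives $x\in\rho(u)$.
\item Conversely, induct along the path. Assuming every earlier vertex $y$ on the path lies in $\rho(u)$, we get $x\in\rho(y)\subseteq\rho(u)$ by (b). Taking $x=w$ gives $w\in\rho(u)$.
\end{itemize}
A bijection that reverses order in both directions is an anti-isomorphism, which gives $\OO(\BB(D^\opp))\simeq\OO(\BB(D))^\opp$. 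This also works for infinite trees, since no finiteness is used.

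\emph{Main obstacle.} The main difficulty is finding the right map $\Phi$. The tree hypothesis enters through unique paths, which makes condition (a) a purely path-local condition. This locality is what allows the ``no intermediate vertex reaches $w$'' formula to satisfy both axioms and to be an involution. I expect the inductive step in Step 3 to be the only place where both (a) and (b) are needed together.
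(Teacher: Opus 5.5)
Your proposal is correct and is essentially the paper's proof: your $\Phi$ is exactly the paper's map $\Psi_D$ in its path-wise ``equivalently'' form (your convention $[w,w)=\emptyset$ is what gives $w\in\Phi(\rho)(w)$), and you check that it produces ornamentations, reverses order, and satisfies $\Psi_{D^\opp}\circ\Psi_D=\mathrm{id}$ by the same arguments. The only cosmetic difference is in the inverse step: you induct forward along the path $[u,w]$, whereas the paper takes the vertex of $[u,v]$ closest to $u$ whose ornament contains $v$ and derives a contradiction from its minimality.
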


\begin{figure}
    \centering
    \includegraphics[height=.47\textheight]{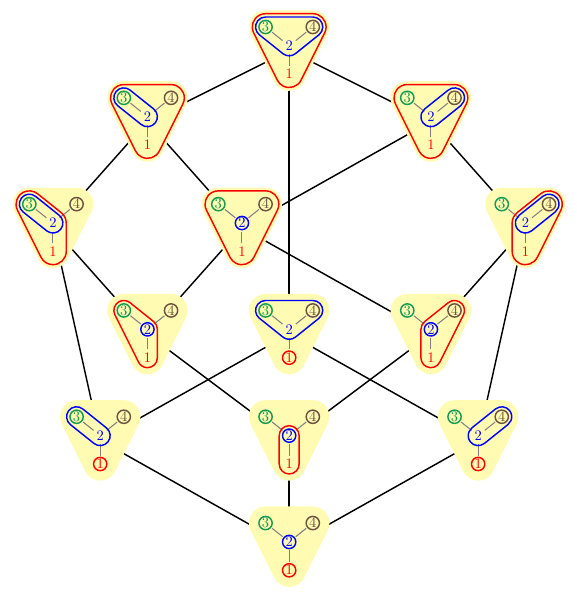}  \\ 
    \includegraphics[height=0.47\textheight]{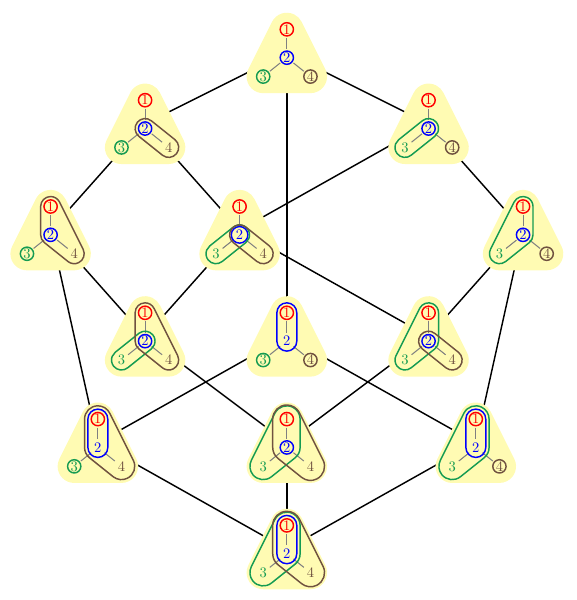}
    \caption{Illustration of \cref{thm:tree_duality}. Top: $\OO(\BB(D))$. Bottom: $\OO(\BB(D^\opp))^\opp$. Each tree is oriented upwards. Bottom is a relabelling of~\cref{fig:ornamentations_example}.}
    \label{fig:TreeDualityLattices}
\end{figure}

\cref{thm:tree_duality} appears deceptively simple. For example, it fails for directed acyclic graphs as seen in \cref{fig:TreeDualityFail}. We will prove this theorem by providing an explicit construction of the isomorphism. However, one can obtain a geometric proof using~\cref{rem:MacNeille_completion} by reversing the direction of orientation and applying an appropriate relabeling.

\begin{figure}
    \centering
    \includegraphics[width=0.5\linewidth]{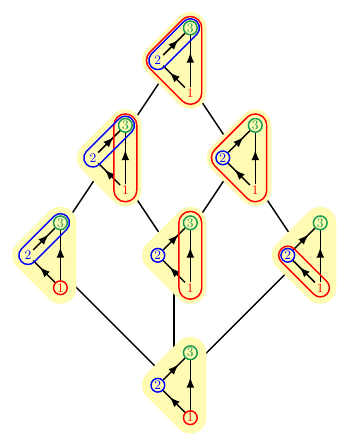}
    \caption{Failure of \cref{thm:tree_duality} for a directed acyclic graph $D$. Observe that $D^\opp \simeq D$ but $\OO(\BB(D))^\opp \not\simeq \OO(\BB(D^\opp))$.}
    \label{fig:TreeDualityFail}
\end{figure}

\begin{definition}
\label{def:tree_flip_map}
 Let $D = (V,E)$ be a directed tree. We define a map $\Psi_D: \OO(\BB(D)) \to \OO(\BB(D^\opp))$.

    Let $\rho \in \OO(\BB(D))$ and $u \in V$. Define $$\omega_D(\rho)(u) := \{v \in V \mid v \preceq_D u \tand u \notin \rho(v)\} \cup \{u\}.$$
    
    Let $$\Psi_D(\rho)(u) := (\max\{S \in \BB(D^\opp)|_u \mid S \subseteq \omega_D(\rho)(u)\},\, u).$$ 

    Equivalently,  $v \in \Psi_D(\rho)(u)$ if and only if: \begin{itemize}
        \item there is a (unique) path from $u$ to $v$ in $D^\opp$ and
        \item for all vertices $w$ in the half-open interval $[v, u)$ in $D$, $u \notin \rho(w)$.
            \end{itemize}
    
\end{definition}

See \cref{fig:tree_flip_map} for a more complex example of $\Psi_D$ than seen in \cref{fig:TreeDualityLattices}.

\begin{figure}
    \centering
    \includegraphics[width=0.4\linewidth]{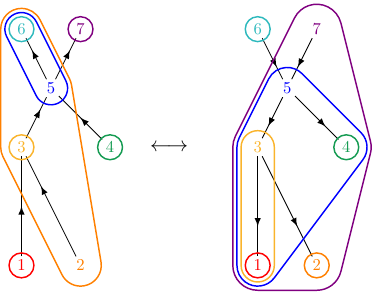}
    \caption{A more involved example of \cref{def:tree_flip_map}.}
    \label{fig:tree_flip_map}
\end{figure}

\begin{lemma}
\label{lem:tree_duality_well_defined}
For all $\rho \in \OO(\BB(D))$, $\Psi_D(\rho)$ is an ornamentation.
\end{lemma}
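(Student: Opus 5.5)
We verify the two conditions of \cref{def:ornamentation} for $\Psi_D(\rho)$, working throughout with the equivalent path description in \cref{def:tree_flip_map}. There, $v \in \Psi_D(\rho)(u)$ iff $v \preceq_D u$ and no vertex $w$ of the half-open path $[v,u)$ has $u \in \rho(w)$.

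\textbf{Condition (1).} $\Psi_D(\rho)(u)$ is pointed at $u$ and lies in $\BB(D^\opp)|_u$. This holds by construction, since it is the maximum of a family in $\BB(D^\opp)|_u$ that contains $\{u\}$. Alternatively, from the path description, $\Psi_D(\rho)(u)$ is closed under moving along the unique tree path toward $u$: if $v$ qualifies, every vertex of $[v,u]$ qualifies, because its half-open interval to $u$ is a subset of $[v,u)$. So the set is $u$-connected in $D^\opp$.

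\textbf{Condition (2).} Suppose $v \in \Psi_D(\rho)(u)$ and $x \in \Psi_D(\rho)(v)$; we must show $x \in \Psi_D(\rho)(u)$. Since $D$ is a tree, paths are unique, so $x \preceq_D v \preceq_D u$ and $[x,u) = [x,v) \sqcup [v,u)$. For $w \in [v,u)$ we already know $u \notin \rho(w)$. It remains to take $w \in [x,v)$ and show $u \notin \rho(w)$.

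Suppose for contradiction that $u \in \rho(w)$. Since $\rho(w)$ is $w$-connected in $D$ and the tree path from $w$ to $u$ is unique and passes through $v$, we get $v \in \rho(w)$. But $w \in [x,v)$ and $x \in \Psi_D(\rho)(v)$ force $v \notin \rho(w)$, a contradiction. Hence $x \in \Psi_D(\rho)(u)$.

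\textbf{Where the hypotheses enter.} The only delicate point is this use of uniqueness of paths in the tree. It is what lets $u \in \rho(w)$ force $v \in \rho(w)$, and it is exactly where the argument fails for general DAGs (cf.\ \cref{fig:TreeDualityFail}). Notably, transitivity of $\rho$ itself is not needed; only the $w$-connectedness of $\rho(w)$ is used.
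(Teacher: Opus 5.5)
Your proof is correct and follows the paper's argument essentially verbatim. Condition (1) holds by construction. For transitivity, you split the tree path $[x,u)$ into $[x,v)$ and $[v,u)$: on the second piece you use $v \in \Psi_D(\rho)(u)$, and on the first you use that $u \in \rho(w)$ would force the entire unique path from $w$ to $u$, hence $v$, into $\rho(w)$, contradicting $x \in \Psi_D(\rho)(v)$.
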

\begin{proof}

    First observe that for all $u \in V$, $\Psi_D(\rho)(u) \in \BB(D^\opp)|_u$ by construction.
    Now let $u \in V$, $v \in \Psi_D(\rho)(u)$, and $w \in \Psi_D(\rho)(v)$. 

    There is a unique path from $w$ to $v$ and a unique path from $v$ to $u$ in $D$.
    Let $x \in [w, u)$ be a vertex on the unique path from $w$ to $u$. We claim that $u \notin \rho(x)$. 

    One of the following holds:
    \begin{itemize}
        \item $x \in [v, u)$ in $D$ or
        \item $x \in [w, v)$ in $D$.
    \end{itemize}
    In the first case, $u \notin \rho(x)$ as $v \in \Psi_D(\rho)(u)$. In the second case, $v \notin \rho(x)$. As $v$ is on the unique path from $x$ to $u$ in $D$, $u \notin \rho(x)$ as well. (As if $u \in \rho(x)$, the entire path from $x$ to $u$ must be contained in $\rho(x)$.)

    Hence $u \notin \rho(x)$, so $w \in \Psi_D(\rho)(u)$.
    \end{proof}

\begin{lemma}
\label{lem:tree_duality_order_reversing}
    $\Psi_D$ is order-reversing.
\end{lemma}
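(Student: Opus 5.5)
The plan is to argue directly from the path characterization of $\Psi_D$ given in \cref{def:tree_flip_map}. Suppose $\rho \preceq \rho'$ in $\OO(\BB(D))$, so that $\rho(w) \subseteq \rho'(w)$ for every $w \in V$. The goal is to show $\Psi_D(\rho')(u) \subseteq \Psi_D(\rho)(u)$ for every $u \in V$, which is exactly the statement $\Psi_D(\rho') \preceq \Psi_D(\rho)$ in $\OO(\BB(D^\opp))$.

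Fix $u \in V$ and $v \in \Psi_D(\rho')(u)$. The equivalent description in \cref{def:tree_flip_map} gives two facts. First, there is a (unique) path from $u$ to $v$ in $D^\opp$. Second, for every vertex $w$ in the half-open interval $[v,u)$ of $D$, we have $u \notin \rho'(w)$.

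The first condition does not involve $\rho$ at all, so it holds for $\rho$ too. For the second, $\rho(w) \subseteq \rho'(w)$ together with $u \notin \rho'(w)$ gives $u \notin \rho(w)$ for each such $w$. Hence $v$ satisfies both conditions with respect to $\rho$, so $v \in \Psi_D(\rho)(u)$.

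The only point needing care is that the two descriptions of $\Psi_D$ in \cref{def:tree_flip_map} really agree. The maximal $u$-connected subset of $D^\opp$ inside $\omega_D(\rho)(u)$ consists exactly of those $v$ whose whole $D^\opp$-path from $u$ stays in $\omega_D(\rho)(u)$. This holds because in a tree the path from $u$ to $v$ is unique, so $u$-connectivity forces every vertex of that path into the set. That uniqueness is where the tree hypothesis enters.

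If one prefers to avoid the path description, there is an alternative route. Since $\omega_D(\rho')(u) \subseteq \omega_D(\rho)(u)$, the set $\Psi_D(\rho')(u)$ is a member of $\BB(D^\opp)|_u$ contained in $\omega_D(\rho)(u)$. By maximality it is therefore contained in $\Psi_D(\rho)(u)$. Either route is short, and I expect no real obstacle.
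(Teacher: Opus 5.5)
Your proof is correct, and your alternative route is exactly the paper's argument: for $\rho \preceq \sigma$ it shows $\omega_D(\sigma)(u) \subseteq \omega_D(\rho)(u)$, because $u \notin \sigma(v)$ forces $u \notin \rho(v)$, and then concludes by maximality in $\BB(D^\opp)|_u$. Your main route is the elementwise form of the same monotonicity, stated through the path description, so it additionally relies on the equivalence of the two descriptions in \cref{def:tree_flip_map}, which the paper's route does not need.
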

\begin{proof}
    Let $\rho, \sigma \in \OO(\BB(D))$ with $\rho \preceq \sigma$ and $u,v \in V$ with $v \in \omega_D(\sigma)(u)$. Then either $u = v$ or $u \notin \sigma(v)$ and hence $u \notin \rho(v)$. In either case, $v \in \omega_D(\rho)(u)$. Hence $\omega_D(\sigma)(u) \subseteq \omega_D(\rho)(u)$. Then by the construction of $\Psi_D$, we have that $\Psi_D(\sigma)(u) \subseteq \Psi_D(\rho)(u)$ and so $\Psi_D$ is order-reversing.

\end{proof}

\begin{lemma}
\label{lem:tree_duality_inverse}
    For any directed tree $D$, we have $\Psi_{D^\opp} \circ \Psi_{D} = \id$.

\end{lemma}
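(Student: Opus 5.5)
Write $\tau := \Psi_D(\rho) \in \OO(\BB(D^\opp))$. The plan is to unwind the path description in \cref{def:tree_flip_map} twice, once for $D$ and once for $D^\opp$, and then show $\Psi_{D^\opp}(\tau)(u) = \rho(u)$ for every $u \in V$ by induction along the unique directed path from $u$ to a given vertex $v$. Throughout I use that $(D^\opp)^\opp = D$ and that paths in a tree are unique. Also, if $v \in \rho(u)$, then the whole $D$-path from $u$ to $v$ lies in $\rho(u)$, since $\rho(u)$ is $u$-connected and the path is unique.

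\textbf{Step 1: when $u \in \tau(w)$.} Let $w \in V$ with a directed path from $u$ to $w$ in $D$, i.e.\ a path from $w$ to $u$ in $D^\opp$. By the equivalent description in \cref{def:tree_flip_map} applied to $D$, we have $u \in \tau(w)$ if and only if $w \notin \rho(x)$ for every vertex $x$ in the half-open interval $[u,w)$ of $D$. So $u \notin \tau(w)$ exactly when some $x$ on the $D$-path from $u$ to $w$, with $x \neq w$, satisfies $w \in \rho(x)$.

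\textbf{Step 2: description of $\Psi_{D^\opp}(\tau)(u)$.} Apply the same description to $D^\opp$ in place of $D$. Then $v \in \Psi_{D^\opp}(\tau)(u)$ if and only if two conditions hold. First, there is a directed path $u = x_0, x_1, \dots, x_k = v$ in $D$. Second, for every $w$ in the half-open interval $[v,u)$ of $D^\opp$, which is $\{x_1,\dots,x_k\}$, we have $u \notin \tau(w)$. By Step 1 the second condition says: for each $m \ge 1$ there is $l < m$ with $x_m \in \rho(x_l)$.

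\textbf{Step 3: equality with $\rho(u)$.} If $v \in \rho(u)$, the path condition holds because $\rho(u)$ is $u$-connected. For each $m \ge 1$, the vertex $x_m$ lies on the path, hence in $\rho(u)$, so $l = 0$ witnesses the second condition. Conversely, suppose the condition of Step 2 holds. I show by induction on $m$ that $x_m \in \rho(u)$. The case $m = 0$ is trivial. For $m \ge 1$, pick $l < m$ with $x_m \in \rho(x_l)$. By induction $x_l \in \rho(u)$, so transitive closure of $\rho$ gives $\rho(x_l) \subseteq \rho(u)$, hence $x_m \in \rho(u)$. Taking $m = k$ gives $v \in \rho(u)$. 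Both $\Psi_{D^\opp}(\tau)(u)$ and $\rho(u)$ are pointed at $u$, so $\Psi_{D^\opp}(\Psi_D(\rho)) = \rho$.

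\textbf{Main obstacle.} The difficulty is bookkeeping rather than substance. One must correctly translate the half-open intervals $[v,u)$ in $D$ versus $D^\opp$ and record which endpoint is excluded. In particular, the witnessing $x$ in Step 1 may be an intermediate vertex not itself in $\rho(u)$, which is why the converse in Step 3 needs the induction instead of a single application of transitivity.
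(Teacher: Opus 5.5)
Your proposal is correct and follows essentially the same route as the paper. The forward inclusion is identical (the witness $l=0$ for $x_m\in(u,v]$). Your strong induction along the path $x_0,\dots,x_k$ is the paper's argument run forward: the paper takes the minimal $w\in[u,v]$ with $v\in\rho(w)$ and derives a contradiction from a witness $x\in[u,w)$ with $w\in\rho(x)$.
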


\begin{proof}
Let $\rho \in \OO(\BB(D)) \tand u \in V$. First we show that $\rho(u) \subseteq \Psi_{D^\opp}(\Psi_{D}(\rho))(u)$. 

Let $v \in \rho(u)$. If $v = u$ then $u \in \Psi_{D^\opp}(\Psi_{D}(\rho))(u)$ as $\Psi_{D^\opp}(\Psi_{D}(\rho))$ is an ornamentation. Now we assume that $v \neq u$. Then for all vertices $w \in (u, v]$ in $D$,  we have $w \in \rho(u)$ and so $u \notin \Psi_D(\rho)(w)$. Therefore $v \in \Psi_{D^\opp}(\Psi_{D}(\rho))(u)$, and so $\rho(u) \subseteq \Psi_{D^\opp}(\Psi_{D}(\rho))(u)$.

It remains to show that $\Psi_{D^\opp}(\Psi_{D}(\rho))(u) \subseteq \rho(u)$.  Let $v \in \Psi_{D^\opp}(\Psi_{D}(\rho))(u)$ and let $w \in [u,v]$ be the unique minimal vertex (in $D$) such that $v \in \rho(w)$. One may observe that $w$ exists as $v \in \rho(v)$. Suppose by way of contradiction that $w \neq u$. 

As $v \in \Psi_{D^\opp}(\Psi_{D}(\rho))(u)$ and $w \in (u, v]$, we have that $u \notin \Psi_{D}(\rho)(w)$. Then there exists $x \in [u, v)$ in $D$ such that $w \in \rho(x)$. Hence $v \in \rho(w) \subseteq \rho(x)$, contradicting the minimality of $w$. Therefore $v \in \rho(u)$ and so $\Psi_{D^\opp}(\Psi_{D}(\rho))(u) \subseteq \rho(u)$.
\end{proof}

\begin{proof}[Proof of \cref{thm:tree_duality}]

\cref{lem:tree_duality_well_defined} shows that $\Psi_D$ is well-defined. \cref{lem:tree_duality_order_reversing} shows that $\Psi_D$ is an order-preserving map to $\OO(\BB(D^\opp))^\opp$. \cref{lem:tree_duality_inverse} shows that $\Psi_D$ is a bijection and its inverse, $\Psi_{D^\opp}: \OO(\BB(D^\opp))^\opp \to \OO(\BB(D))$, is also order-preserving by \cref{lem:tree_duality_order_reversing}. Hence $\OO(\BB(D^\opp)) \simeq \OO(\BB(D))^\opp$.
    
\end{proof}

\begin{corollary}
\label{cor:rooted_tree_away_from_root_semidistributive}
Let $D$ be a finite rooted tree with edges directed \emph{away} from the root. Then $\OO(\BB(D))$ is semidistributive.
\end{corollary}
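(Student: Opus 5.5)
The plan is to reduce the corollary to \cref{cor:rooted_tree_towards_root_semidistributive} by way of \cref{thm:tree_duality}. Let $D$ be a finite rooted tree with edges directed away from the root. Its dual $D^\opp$ is the same rooted tree with every edge directed \emph{towards} the root. In particular $D^\opp$ is a finite directed tree, so \cref{cor:rooted_tree_towards_root_semidistributive} applies and $\OO(\BB(D^\opp))$ is semidistributive.

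Next we apply \cref{thm:tree_duality} to the directed tree $D^\opp$. Since $(D^\opp)^\opp = D$, this gives $\OO(\BB(D)) \simeq \OO(\BB(D^\opp))^\opp$. It therefore suffices to show that the dual of a semidistributive lattice is semidistributive.

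That last step is immediate from the definitions. Passing from $L$ to $L^\opp$ interchanges meets and joins. Hence the meet-semidistributive law in $L^\opp$ is exactly the join-semidistributive law in $L$, and conversely. So if $L$ satisfies both laws, then so does $L^\opp$.

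Semidistributivity is also preserved under lattice isomorphism, which completes the argument. No step here is a real obstacle. The only point to check is that \cref{cor:rooted_tree_towards_root_semidistributive} genuinely covers $D^\opp$: it must be finite, and $\BB(D^\opp)$ must be acyclic with each $\BB(D^\opp)|_i$ a chain. Both conditions hold, since in a tree directed towards the root the $v$-connected sets containing $v$ are exactly the initial segments of the path from $v$ to the root.
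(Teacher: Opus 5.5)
Your proposal is correct and is the paper's own argument: the paper deduces this corollary directly from \cref{cor:rooted_tree_towards_root_semidistributive} and \cref{thm:tree_duality}, leaving implicit the fact that semidistributivity passes to the dual lattice. You also verify what the paper does not spell out, namely that $\BB(D^\opp)$ is finite and acyclic and that each $\BB(D^\opp)|_i$ is a chain (the initial segments of the path to the root).
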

\begin{proof}
This follows directly from \cref{cor:rooted_tree_towards_root_semidistributive} and \cref{thm:tree_duality}.
\end{proof}

\cref{cor:rooted_tree_away_from_root_semidistributive} was first proven in~\cite{ajran2025pop}. We can strengthen this result to all finite directed trees, following essentially the same proof from~\cite{ajran2025pop}.
\begin{theorem}
    If $D = (V,E)$ is a finite directed tree, then $\OO(\BB(D))$ is semidistributive.
\end{theorem}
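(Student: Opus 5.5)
The plan is to use \cref{prop:SemiDistributiveLemma}, and to use \cref{thm:tree_duality} to get the second half for free. Since $D^\opp$ is again a finite directed tree, and $\OO(\BB(D))^\opp \simeq \OO(\BB(D^\opp))$, meet-semidistributivity of $\OO(\BB(D))$ is equivalent to join-semidistributivity of $\OO(\BB(D^\opp))$. It therefore suffices to prove that $\OO(\BB(D))$ is join-semidistributive for \emph{every} finite directed tree $D$. For that, we show that for every cover $\rho \precdot \sigma$ the set $\Delta(\sigma)-\Delta(\rho)$ has a unique minimal element.

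Fix a cover $\rho\precdot\sigma$. A directed tree is acyclic, so \cref{lem:covering_relation_DAG} gives unique vertices $u,v$ with $\sigma(u)=\rho(u)\cup\rho(v)$, and $\sigma(w)=\rho(w)$ for all $w\neq u$. Since $\sigma(u)$ is $u$-connected and $v\in\sigma(u)$, the unique directed path $P$ from $u$ to $v$ in $D$ lies in $\sigma(u)$. We then take the candidate $\mu:=\rho_{(P,u)}$ from \cref{rmk:isomorphic_copy}, which has value $P$ at $u$ and singletons elsewhere. This is an ornamentation, because every $w\in P$ with $w\neq u$ has $\mu(w)=\{w\}\subseteq P$. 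We also have $\mu\preceq\sigma$, and $\mu\not\preceq\rho$ because $v\notin\rho(u)$.

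The key step is to show that every $\tau$ with $\tau\preceq\sigma$ and $\tau\not\preceq\rho$ satisfies $\mu\preceq\tau$. This reduces to proving $v\in\tau(u)$: once $v\in\tau(u)$, the set $\tau(u)$ is $u$-connected in a tree, so it contains $P$.
\begin{enumerate}
\item For $w\neq u$ we have $\tau(w)\subseteq\sigma(w)=\rho(w)$. Hence the failure of $\tau\preceq\rho$ must occur at $u$, so there is some $x\in\tau(u)-\rho(u)$.
\item Since $\tau(u)\subseteq\sigma(u)=\rho(u)\cup\rho(v)$, we get $x\in\rho(v)$.
\item Because $D$ is a tree, the directed path from $u$ to $x$ is unique, and it lies in $\tau(u)$. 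We claim this path passes through $v$.
\end{enumerate}

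The claim in the last step is where the tree hypothesis is really used, and it is the step I expect to be the main obstacle. Suppose the path from $u$ to $x$ did not pass through $v$. The path from $v$ to $x$ lies in $\rho(v)\subseteq\sigma(u)$, and the path from $u$ to $v$ lies in $\sigma(u)$. Then $x$ would be reached from $u$ by two different directed walks whose union contains an undirected cycle, which a tree does not allow. The only exception would be $v$ lying on the path, which is exactly the case we want. So $v$ is on the path, hence $v\in\tau(u)$ and $\mu\preceq\tau$.

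This shows that $\mu$ is the unique minimal element of $\Delta(\sigma)-\Delta(\rho)$. By \cref{prop:SemiDistributiveLemma}, $\OO(\BB(D))$ is join-semidistributive for every finite directed tree $D$. Applying this to $D^\opp$ and invoking \cref{thm:tree_duality} gives meet-semidistributivity of $\OO(\BB(D))$, so $\OO(\BB(D))$ is semidistributive.
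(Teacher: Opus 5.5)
Your proof is correct. The join-semidistributive half is the paper's argument. Your $\mu=\rho_{([u,v],u)}$ is exactly the paper's $\mu_\downarrow$, and your steps (1)--(3) follow the paper's reasoning, written a bit more carefully. The paper writes $\rho(u)\subsetneq\tau(u)$, which does not follow from $\tau\not\preceq\rho$; you correctly extract only some $x\in\tau(u)-\rho(u)$.

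Two small points you could tighten:
\begin{itemize}
\item The fact that $\sigma(w)=\rho(w)$ for $w\neq u$ comes from \cref{lem:cover_relations}, not \cref{lem:covering_relation_DAG}.
\item The fact $v\notin\rho(u)$ holds because otherwise $\rho(v)\subseteq\rho(u)$ would force $\sigma(u)=\rho(u)$.
\end{itemize}
Step (3) is also cleanest stated as follows: the concatenation of the directed paths $u\to v$ and $v\to x$ is a directed path in an oriented tree, so by uniqueness it is the path from $u$ to $x$, and it passes through $v$.

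Where you genuinely differ is the meet half. The paper constructs the maximal element of $\nabla(\rho)-\nabla(\sigma)$ explicitly:
\[
\mu_\uparrow(w)=\nabla_D(w)-\nabla_D(v)\ \text{ for } u\preceq_D w\prec_D v, \qquad \mu_\uparrow(w)=\nabla_D(w)\ \text{ otherwise}.
\]
It then checks that every $\tau\succeq\rho$ with $\sigma\not\preceq\tau$ lies below $\mu_\uparrow$. This needs a few checks that the paper leaves implicit: that $\mu_\uparrow$ is an ornamentation, and that each $t\in[u,v)$ lies in $\rho(u)\subseteq\tau(u)$, so that $v\notin\tau(t)$.

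You instead observe that finite directed trees are closed under reversal. You then use \cref{thm:tree_duality} to turn join-semidistributivity of $\OO(\BB(D^\opp))$ into meet-semidistributivity of $\OO(\BB(D))$. This is the same trick the paper itself uses for \cref{cor:rooted_tree_away_from_root_semidistributive}.

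Your route halves the work and avoids those extra verifications, at the cost of depending on the duality theorem. The paper's route is self-contained and yields an explicit description of the meet-side extremal element for each cover.
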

\begin{proof}
We will again make use of \cref{prop:SemiDistributiveLemma}. Let $\rho \precdot \sigma$ in $\OO(\BB(D))$. By \cref{lem:covering_relation_DAG}, there exist unique $u, v \in V$ such that $\sigma(u) = \rho(u) \cup \rho(v)$.

As $D$ is a directed tree and $v \in \sigma(u)$, there is a unique path from $u$ to $v$ in $D$, which is the interval $[u,v]$. 

We show that 
\begin{enumerate} 
    \item \(\Delta_{\OO(\BB)}(\sigma)-\Delta_{\OO(\BB)}(\rho)\) has a unique minimal element 
    \item \(\nabla_{\OO(\BB)}(\rho)-\nabla_{\OO(\BB)}(\sigma)\) has a unique maximal element. 
\end{enumerate}

Define \(\mu_\downarrow: \IndexingSet \to \BB \tand \mu_\uparrow: \IndexingSet \to \BB\) by 

$$
\begin{aligned}
\mu_\downarrow(w) &= 
    \begin{cases} 
        ([u,v], u) & w = u \\
        (\{w\}, w) & w \neq u 

    \end{cases}\\
\\ \mu_\uparrow(w) &= 
    \begin{cases} 
        (\nabla_D(w)-\nabla_D(v),\, w) & u \preceq_D w \prec_D v\\
        (\nabla_D(w),\, w) & \text{otherwise.} 
    \end{cases}
\end{aligned}
$$
It is straightforward to verify that $$\mu_\downarrow \in \Delta_{\OO(\BB)}(\sigma)-\Delta_{\OO(\BB)}(\rho) \tand \mu_\uparrow \in \nabla_{\OO(\BB)}(\rho)-\nabla_{\OO(\BB)}(\sigma).$$ 

Suppose that $\tau \in \Delta_{\OO(\BB)}(\sigma)-\Delta_{\OO(\BB)}(\rho)$. Then $\rho(u)\subsetneq \tau(u) \subseteq \sigma(u)$. Hence there must exist $ w \in (\sigma(u)-\rho(u)) \cap \tau(u)$. As $v$ is on the unique path from $u$ to $w$, $v \in \tau(u)$. Hence $\mu_\downarrow \preceq \tau$.

Now suppose that $\tau \in \nabla_{\OO(\BB)}(\rho)-\nabla_{\OO(\BB)}(\sigma)$. Then $v \notin \tau(u)$ as otherwise $$\sigma(v) = \rho(v) \subseteq \tau(v) \subseteq  \tau(u)$$ and $\tau \preceq \sigma.$ Then for $w \in \nabla_D(v)$, $w \notin \tau(u)$ as the unique path from $u$ to $w$ contains $v$. A similar argument holds, for any $t$ such that $u \preceq_D t \prec_D v$. Hence $\tau \preceq \mu_\uparrow$.
\end{proof}

\section{Direct limits of pointed building sets induce inverse limits of ornamentation lattices}
\label{sec:direct_limits}

    Let $\DirectedSet$ be a directed set, let $\{(\IndexingSet_\alpha, \BB_\alpha)\}_{\alpha \in \DirectedSet}$ be a collection of pairs such that
    \begin{itemize}
        \item for all $\alpha \in \DirectedSet$, $\IndexingSet_\alpha$ is an indexing set and $\BB_\alpha$ is a pointed building set over $\IndexingSet_\alpha$;
        \item for all $\alpha \le \beta$ in $\DirectedSet$, we have $\IndexingSet_\alpha \subseteq \IndexingSet_\beta$ and $\BB_\alpha \subseteq \BB_\beta$;
        \item for all $\alpha \in \DirectedSet$, $\BB_\alpha$ is \dfn{locally finite}, i.e., for all $i \in \IndexingSet_\alpha$, all sets in $\BB|_i$ are finite.
    \end{itemize}

\begin{definition}
We define $$\DirectLimit \IndexingSet_\alpha := \bigcup\limits_{\alpha \in \DirectedSet} \IndexingSet_\alpha.$$ 

For a collection of pointed subsets $\mathcal C$ of $\IndexingSet$, define the \dfn{pointwise union closure} $$\PointwiseUnionClosure(\mathcal C) := \left\{ \left(\bigcup_{(S, i) \in \AA} S,\ i \right) \mid i \in \IndexingSet \tand \emptyset \subsetneq \AA \subseteq \mathcal C|_i\right\}.$$ 

By an abuse of notation, we define $\DirectLimit \BB_\alpha := \PointwiseUnionClosure\left(\bigcup\limits_{\alpha \in \DirectedSet} \BB_\alpha\right)$.
    
\end{definition}

\begin{remark}
    It would be interesting to make this notation rigorous by defining a category of pointed building sets.
\end{remark}

\begin{lemma}
    $\DirectLimit \BB_\alpha$ is a pointed-building set over $\DirectLimit \IndexingSet_\alpha$.
\end{lemma}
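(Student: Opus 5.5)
We verify the three axioms of \cref{def:pointed_building_set} for $\BB := \DirectLimit \BB_\alpha$ over $\IndexingSet := \DirectLimit \IndexingSet_\alpha$. Write $\mathcal U := \bigcup_{\alpha} \BB_\alpha$. By definition every element of $\BB$ has the form $\bigl(\bigcup_{(S,i)\in \AA} S,\ i\bigr)$ for a non-empty $\AA \subseteq \mathcal U|_i$, so every element is pointed at some $i \in \IndexingSet$ and has underlying set in $\IndexingSet$.

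\emph{Singletons.} For $i \in \IndexingSet$ pick $\alpha$ with $i \in \IndexingSet_\alpha$. Then $(\{i\}, i) \in \BB_\alpha \subseteq \mathcal U$, and taking $\AA = \{(\{i\},i)\}$ gives $(\{i\},i) \in \BB$.

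\emph{Pointwise unions.} A union of unions of non-empty subfamilies of $\mathcal U|_i$ is again the union of a non-empty subfamily of $\mathcal U|_i$, namely the union of the subfamilies. So $\BB|_i$ is closed under arbitrary non-empty unions, because $\PointwiseUnionClosure$ is idempotent.

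\emph{Transitive closure.} This is the substantive axiom. Let $(S,i), (T,j) \in \BB$ with $j \in S$. Write $S = \bigcup_{a} S_a$ and $T = \bigcup_b T_b$ with $(S_a,i), (T_b,j) \in \mathcal U$. Since $j \in S$, fix $a_0$ with $j \in S_{a_0}$. For each $b$, choose $\alpha, \beta$ with $(S_{a_0},i) \in \BB_\alpha$ and $(T_b,j)\in\BB_\beta$, and use directedness to pick $\gamma \ge \alpha,\beta$. Both sets lie in $\BB_\gamma$ because the family is increasing. Transitive closure of $\BB_\gamma$ then gives $(S_{a_0}\cup T_b, i) \in \BB_\gamma \subseteq \mathcal U$. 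Now
\[
S \cup T = \bigcup_a S_a \;\cup\; \bigcup_b (S_{a_0}\cup T_b),
\]
which is a union of a non-empty family of members of $\mathcal U|_i$. Hence $(S\cup T, i) \in \BB$.

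The only step requiring care is this reduction to a single stage $\BB_\gamma$. It uses the directedness of $\DirectedSet$ and the monotonicity $\BB_\alpha \subseteq \BB_\beta$, and it works only because each comparison involves just two generating sets at a time. Local finiteness is not needed for this lemma; it presumably enters later, for the inverse-limit statement.
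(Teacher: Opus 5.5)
Your proof is correct and follows essentially the same route as the paper: singletons are inherited from some $\BB_\alpha$, pointwise unions hold by construction of $\PointwiseUnionClosure$, and transitive closure comes from pulling one generating set containing $j$ and one generating set of $T$ into a common $\BB_\gamma$ via directedness. The only cosmetic difference is that you index over the generating sets $T_b$ rather than over points $k \in T$, and you write $S \cup T$ directly as a union of generators instead of invoking the already-established closure under pointwise unions.
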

\begin{proof}
We check the three axioms from \cref{def:pointed_building_set}.
\begin{enumerate}
    \item Let $i \in \DirectLimit \IndexingSet_\alpha$. Then there exists $\alpha \in \DirectedSet$ such that $i \in \IndexingSet_\alpha$. Hence $(\{i\}, i) \in \BB_\alpha \subseteq \DirectLimit \BB_\alpha$.
    \item Suppose $(S, i), (T,j) \in \DirectLimit \BB_\alpha$ such that $j \in S$. Further suppose that $k \in T$. Then there exists $\alpha \in \DirectedSet$ and $(S_\alpha, i) \in \BB_\alpha$ such that $j \in S_\alpha$ and $S_\alpha \subseteq S$ and there exists $\beta \in \DirectedSet$ and $(T_\beta, j) \in \BB_\beta$ such that $k \in T_\beta$ and $T_\beta \subseteq T$. There exists $\gamma \in \DirectedSet$ such that $\alpha, \beta \preceq \gamma$. Hence $(S_\alpha, i), (T_\beta, j) \in \BB_\gamma$ and as $\BB_\gamma$ is a pointed building set, $(S_\alpha \cup T_\beta, i) \in \BB_\gamma$. Hence $(S_\alpha \cup T_\beta, i) \in \DirectLimit \BB_\alpha$. 
    Let $S_k = S_\alpha \cup T_\beta$. Observe that $S \cup T = S \cup \left( \bigcup_{k \in T} S_k \right)$ and as $\DirectLimit \BB_\alpha|_i$ is closed under arbitrary (non-empty) unions, $(S \cup T, i) \in \DirectLimit \BB_\alpha$.

    \item By construction, for all $i \in \DirectLimit \IndexingSet_\alpha,\  \DirectLimit \BB_\alpha|_i$ is closed under arbitrary unions.
\end{enumerate}

\end{proof}

\begin{definition}
    For $\alpha \le \beta$ in $\DirectedSet$, define $\Psi_{\beta\alpha}: \OO(\BB_\beta)\to \OO(\BB_\alpha)$ by $$\Psi_{\beta\alpha}(\rho)(i) = \left(\max\{S \in \BB_\alpha|_{i} \mid S \subseteq \rho(i)\},\ i \right).$$

    For $\alpha \in \DirectedSet$, we also define projection maps $\pi_\alpha: \OO(\DirectLimit \BB_\beta) \to \OO(\BB_\alpha)$ by 
    $$\pi_\alpha(\rho)(i) = \left(\max\{S \in \BB_\alpha|_{i} \mid S \subseteq \rho(i)\},\ i \right).$$
\end{definition}

\begin{lemma}
For all $\alpha, \beta \in \DirectedSet$, both $\Psi_{\beta\alpha}$ and $\pi_\alpha$ are well-defined. 
\end{lemma}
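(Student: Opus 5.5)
We must show two things for each map: the maximum in its definition exists and is pointed at $i$, and the resulting function is an ornamentation. Write $\rho'$ for $\Psi_{\beta\alpha}(\rho)$ or $\pi_\alpha(\rho)$, and write $\BB$ for $\BB_\beta$ or $\DirectLimit \BB_\gamma$ accordingly. In both cases $\BB_\alpha \subseteq \BB$ and $\IndexingSet_\alpha \subseteq \IndexingSet_\beta$ (resp.\ $\subseteq \DirectLimit \IndexingSet_\gamma$), so $\rho(i)$ is defined for each $i \in \IndexingSet_\alpha$.

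\emph{Existence of the maximum.} Fix $i \in \IndexingSet_\alpha$ and consider the family $\{S \in \BB_\alpha|_i \mid S \subseteq \rho(i)\}$. It is nonempty, since $(\{i\}, i) \in \BB_\alpha|_i$ by axiom (1) and $i \in \rho(i)$. By axiom (3) applied to $\BB_\alpha$, the union of all members is again in $\BB_\alpha|_i$. This union is still contained in $\rho(i)$, so it is the maximum, exactly as for $\lambda(i)$ in the proof of \cref{thm:complete_lattice}. Hence $\rho'(i) \in \BB_\alpha|_i$, which gives condition (1) of \cref{def:ornamentation}.

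\emph{Transitive closure.} Let $j \in \rho'(i)$. Since $\rho'(i) \subseteq \rho(i)$ and $\rho$ is an ornamentation of $\BB$, we get $\rho(j) \subseteq \rho(i)$, and therefore $\rho'(j) \subseteq \rho(j) \subseteq \rho(i)$. We have $(\rho'(i), i), (\rho'(j), j) \in \BB_\alpha$ with $j \in \rho'(i)$, so axiom (2) for $\BB_\alpha$ gives $(\rho'(i) \cup \rho'(j), i) \in \BB_\alpha|_i$. This set lies inside $\rho(i)$, so maximality of $\rho'(i)$ forces $\rho'(j) \subseteq \rho'(i)$. This is the same argument as in the meet construction of \cref{thm:complete_lattice}.

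The argument does not use local finiteness or the directed structure; it uses only $\BB_\alpha \subseteq \BB$ and the axioms for $\BB_\alpha$. The one point that needs care is that $\rho'(j)$ is computed in $\BB_\alpha$ while $\rho(j)$ lives in the larger building set, and this is handled by the inequality $\rho'(j) \subseteq \rho(j)$.
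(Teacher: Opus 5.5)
Your proof is correct and follows essentially the same route as the paper. Transitive closure comes from $\rho'(j) \subseteq \rho(j) \subseteq \rho(i)$ together with axiom (2) for $\BB_\alpha$ and the maximality of $\rho'(i)$; your separate check that the maximum exists, via axioms (1) and (3), is a step the paper simply takes for granted.
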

\begin{proof}
The same proof holds for both maps.
     For all $i \in \IndexingSet_\alpha$, $\Psi(\rho)(i) \in \BB_\alpha|_{i}$ so it remains to show that if $j \in \Psi(\rho)(i)$ then $\Psi(\rho)(j) \subseteq \Psi(\rho)(i)$. Indeed, if $j \in \Psi(\rho)(i)$ then $j \in \rho(i)$ and $\left(\Psi(\rho)(i) \cup \Psi(\rho)(j),\ i\right) \in \BB_\alpha$. As $j \in \rho(i)$, $\rho(j) \subseteq \rho(i)$, and hence $\Psi(\rho)(j) \subseteq \rho(i)$. Then $\Psi(\rho)(j) \subseteq \Psi(\rho)(i)$ by construction. 

\end{proof}

By construction, $\Psi_{\beta\alpha}$ and $\pi_\alpha$ are order-preserving and for $\alpha \le\beta \le \gamma$ in $\DirectedSet$, $\Psi_{\beta\alpha}\circ \Psi_{\gamma\beta} = \Psi_{\gamma\alpha}$.  Furthermore, $\Psi_{\beta\alpha}\circ\pi_\beta = \pi_\alpha$ This makes $(\OO(\BB_\alpha))_{\alpha \in \DirectedSet}$ into an inverse system. 

\begin{remark}
In general,  $\Psi_{\beta\alpha}$ may not be a lattice map. Let $\mathcal I_1 = \mathcal I_2 = \{1, 2, 3\}$ and let 
    $$\begin{aligned}
    \BB_1 &= \{(\{1\}, 1) , (\{1, 2, 3\}, 1), (\{2\}, 2), (\{3\}, 3)\}\\
   \BB_2 &= \{(\{1\}, 1), (\{1, 2\}, 1), (\{1, 2, 3\}, 1), (\{2\}, 2), (\{2, 3\}, 2), (\{3\}, 3)\} 
    \end{aligned}$$
    $$\begin{aligned} 
    \sigma(1) &= (\{1, 2\}, 1)  \\
    \sigma(2) &= (\{2\}, 2)     \\
    \sigma(3) &= (\{3\}, 3)     \\
    \rho(1) &= (\{1\}, 1)  \\
    \rho(2) &= (\{2, 3\}, 2)     \\
    \rho(3) &= (\{3\}, 3).
    \end{aligned}$$
    Then $\Psi_{21}(\sigma \join \rho)(1) = (\{1, 2, 3\}, 1)$ but $(\Psi_{21}(\sigma) \join \Psi_{21}(\rho))(1) = (\{1\}, 1)$.
It would be interesting to classify when $\Psi_{\beta\alpha}$ is a lattice map.
\end{remark}

We now arrive at the main theorem of this section.

\begin{theorem}
\label{thm:inverse_limit_general}
    In the category of posets, $\InverseLimit \OO(\BB_\alpha) \simeq \OO\left(\DirectLimit \BB_\alpha\right)$.
\end{theorem}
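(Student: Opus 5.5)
The plan is to identify $\InverseLimit \OO(\BB_\alpha)$ concretely and write down mutually inverse order-preserving maps. In the category of posets, the inverse limit is the set of compatible families $(\rho_\alpha)_{\alpha \in \DirectedSet}$ with $\rho_\alpha \in \OO(\BB_\alpha)$ and $\Psi_{\beta\alpha}(\rho_\beta) = \rho_\alpha$ for all $\alpha \le \beta$. It is ordered componentwise, and the structure maps are the coordinate projections.

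\textbf{The forward map.} Define $\Phi: \OO(\DirectLimit \BB_\alpha) \to \InverseLimit \OO(\BB_\alpha)$ by $\Phi(\rho) = (\pi_\alpha(\rho))_\alpha$. This is well-defined by the identity $\Psi_{\beta\alpha}\circ\pi_\beta = \pi_\alpha$ noted above. It is order-preserving because each $\pi_\alpha$ is.

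\textbf{The backward map.} Define $\Theta$ on a compatible family by
$$\Theta((\rho_\alpha)_\alpha)(i) := \left(\bigcup_{\alpha:\, i \in \IndexingSet_\alpha} \rho_\alpha(i),\ i\right).$$
This is a nonempty pointwise union of elements of $\bigcup_\alpha \BB_\alpha$ pointed at $i$, so it lies in $\DirectLimit \BB_\alpha$. Since $\Psi_{\beta\alpha}(\rho_\beta)(i) \subseteq \rho_\beta(i)$, compatibility gives $\rho_\alpha(i) \subseteq \rho_\beta(i)$ whenever $\alpha \le \beta$; the union is therefore directed.

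Transitive closure then follows from directedness. Suppose $j \in \Theta(\rho)(i)$ and $k \in \Theta(\rho)(j)$. Then $j \in \rho_\alpha(i)$ and $k \in \rho_\beta(j)$ for some $\alpha,\beta$. Choosing $\gamma \ge \alpha,\beta$ gives $j \in \rho_\gamma(i)$ and $k \in \rho_\gamma(j) \subseteq \rho_\gamma(i)$. Clearly $\Theta$ is order-preserving.

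\textbf{Checking the composites.} First, $\Phi\circ\Theta = \id$, i.e.\ $\pi_\alpha(\Theta(\rho))(i) = \rho_\alpha(i)$. The inclusion $\supseteq$ holds because $\rho_\alpha(i) \in \BB_\alpha|_i$ and $\rho_\alpha(i) \subseteq \Theta(\rho)(i)$. For $\subseteq$, take $S \in \BB_\alpha|_i$ with $S \subseteq \Theta(\rho)(i)$. By local finiteness $S$ is finite. Each element of $S$ lies in some $\rho_\beta(i)$, so directedness gives a single $\gamma \ge \alpha$ with $S \subseteq \rho_\gamma(i)$. Then $S \subseteq \Psi_{\gamma\alpha}(\rho_\gamma)(i) = \rho_\alpha(i)$.

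Second, $\Theta\circ\Phi = \id$, i.e.\ $\rho(i) = \bigcup_\alpha \pi_\alpha(\rho)(i)$. The inclusion $\supseteq$ is immediate. For $\subseteq$, $\rho(i)$ is a union of sets $S \in \BB_\alpha|_i$ over various $\alpha$, by the definition of $\PointwiseUnionClosure$. Each such $S$ is contained in $\rho(i)$, hence in $\pi_\alpha(\rho)(i)$.

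Since $\Phi$ and $\Theta$ are mutually inverse and both order-preserving, $\Phi$ is an isomorphism of posets.

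I expect the main obstacle to be the inclusion $\pi_\alpha(\Theta(\rho))(i) \subseteq \rho_\alpha(i)$. It is the only place where local finiteness is needed: an infinite $S$ could be covered only by the whole directed union and never by a single $\rho_\gamma(i)$. It also relies on the monotonicity $\rho_\alpha(i) \subseteq \rho_\gamma(i)$ extracted from compatibility. One should also check that $i \in \IndexingSet_\gamma$ whenever $\gamma \ge \alpha$ and $i \in \IndexingSet_\alpha$, which holds because $\IndexingSet_\alpha \subseteq \IndexingSet_\gamma$.
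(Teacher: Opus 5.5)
Your proof is correct and is essentially the paper's argument in different packaging. The paper verifies the universal property for an arbitrary cone $(\phi_\alpha\colon P\to\OO(\BB_\alpha))_\alpha$ using the same pointwise-union map, while you use the concrete model of the limit as compatible families, so your $\Theta$ is the paper's map for the universal cone. Your check that $\pi_\alpha(\Theta(\rho)) = \rho_\alpha$ is the paper's factorization step, with local finiteness and directedness used the same way, and your check that $\Theta$ inverts $(\pi_\alpha)_\alpha$ is the paper's uniqueness step, resting on the $\PointwiseUnionClosure$ description of $\rho(i)$.
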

\begin{proof}
We show that $\OO\left(\DirectLimit \BB_\alpha\right)$ satisfies the universal property of an inverse limit. Let $P$ be a poset and  $(\phi_\alpha : P \to \OO(\BB_\alpha))_{\alpha\in\DirectedSet}$ be a family of order-preserving maps such that
$$\text{for all } \beta\ge\alpha\text{ in }\DirectedSet,\quad \Psi_{\beta\alpha} \circ \phi_\beta = \phi_\alpha.$$
We show there is a unique order-preserving map
$$\Phi: P \to \OO\left(\DirectLimit \BB_\alpha\right)$$
such that for all $\alpha \in \DirectedSet$,$$
\pi_\alpha \circ \Phi = \phi_\alpha.$$

Fix $x\in P$. For each $i \in \DirectLimit \IndexingSet_\alpha$ and $\alpha \in \DirectedSet$ such that $i \in \IndexingSet_\alpha$, define
$$\Phi(x)(i) := \left( \bigcup_{\alpha \st i\in\IndexingSet_\alpha} \phi_\alpha(x)(i),\ i \right).$$

We check that $\Phi$ is well-defined. As $\DirectLimit\BB_\alpha$ is closed under arbitrary pointwise unions, $\Phi(x)$ is indeed an element of $\OO(\DirectLimit\BB_\alpha)$. Now suppose that $j \in \Phi(x)(i)$ and $k \in \Phi(x)(j)$. Then there exists $\alpha, \beta, \gamma \in \DirectedSet$ such that 
\begin{enumerate}
    \item $j \in \phi_\alpha(x)(i)$ and
    \item $k \in \phi_\beta(x)(j)$ and
    \item $\alpha, \beta \le \gamma$.
\end{enumerate}
As $k \in \Psi_{\gamma\beta}(\phi_\gamma(x))(j)$, we have $$k \in \phi_\gamma(x)(j) \subseteq \phi_\gamma(x)(i) \subseteq \Phi(x)(i)$$ so $\Phi(x) \in \OO\left(\DirectLimit \BB_\alpha\right)$.

Now we show that $\Phi$ is order-preserving.
    Suppose $x \le y$ in $P$. Then for each $\alpha$ and each $i \in \IndexingSet_\alpha$,
    $$\phi_\alpha(x)(i) \subseteq  \phi_\alpha(y)(i),$$ and taking unions gives $\Phi(x)(i) \subseteq \Phi(y)(i)$. Thus, $\Phi(x) \preceq \Phi(y)$.

    Next we show that $\pi_\alpha\circ\Phi = \phi_\alpha$. 
    By construction, for $i \in \IndexingSet_\alpha, \phi_\alpha(x)(i) \subseteq \Phi(x)(i)$. Hence $\phi_\alpha(x)(i) \subseteq \pi_\alpha(\Phi(x))(i)$. Now suppose that $j \in \pi_\alpha(\Phi(x))(i)$. Then 
    \begin{itemize}
        \item $j \in \Phi(x)(i)$
        \item there exists $(S, i) \in \BB_\alpha$ with $j \in S$ and $S \subseteq \Phi(x)(i)$
        \item for all $k \in S$, there exists $\beta_k \in \DirectedSet$ with $k \in \phi_{\beta_k}(x)(i)$.
    \end{itemize} 
    As $S$ is finite, there exists $\gamma \in \DirectedSet$ such that for all $k \in S$, $\beta_k \le \gamma$. As $$k \in \Psi_{\gamma\beta_k}(\phi_\gamma(x))(i) = \phi_{\beta_k}(x)(i),$$ we have $k \in \phi_\gamma(x)(i)$ as well. Hence $S \subseteq \phi_\gamma(x)(i)$. Hence $$j \in (S, i) \subseteq \Psi_{\gamma\alpha}(\phi_\gamma(x))(i) = \phi_\alpha(x)(i).$$
     
   Finally, we show that $\Phi$ is unique.
    Suppose there were another map $\Phi': P \to \OO\left(\DirectLimit \BB_\alpha\right)$ such that $\pi_\alpha \circ \Phi' = \phi_\alpha$ for all $\alpha$.
    Then for each $x \in P, \alpha \in \DirectedSet$, and $i \in \IndexingSet_\alpha$ we must have
    $$
    \pi_\alpha(\Phi'(x))(i) = \phi_\alpha(x)(i) = \pi_\alpha(\Phi(x))(i).
    $$

    Then $\phi_\alpha(x)(i) \subseteq \Phi'(x)(i)$ and hence $\Phi(x)(i) \subseteq \Phi'(x)(i)$. 
    
    Conversely, if $j \in \Phi'(x)(i)$ then there exists $\alpha \in \DirectedSet$ and $(S, i) \in \BB_\alpha$ such that $j \in S$ and $S \subseteq \Phi'(x)(i)$. Then $$j \in S \subseteq \pi_\alpha(\Phi'(x)) = \phi_\alpha(x)(i) \subseteq \Phi(x)(i).$$
    Hence for all $x \in P$, we have $\Phi(x)(i) = \Phi'(x)(i)$ so $\Phi = \Phi'$.
    
Hence, $\OO\left(\DirectLimit \BB_\alpha\right)$ satisfies the universal property of the inverse limit.

\end{proof}

\begin{corollary}
    Let $[n]$ have its usual total order. For $m \le n$, define $\pi_{nm}:[n] \to [m]$ by $$\pi_{nm}(i) = \begin{cases} i & i \le m \\ m & i > m.\end{cases}$$
    Then $\InverseLimit [n] \simeq \omega +1$, the positive integers with an upper bound adjoined.
\end{corollary}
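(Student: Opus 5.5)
The plan is to realize the inverse system $([n], \pi_{nm})$ as the inverse system of ornamentation lattices coming from a directed system of pointed building sets, and then to apply \cref{thm:inverse_limit_general}. For each $n \ge 1$ I take $\IndexingSet_n = \{1\}$ together with the pointed building set
$$\BB_n = \{(\{1\}\cup[2,k],\,1) \mid 1\le k\le n\} \cup \{(\{j\},j)\},$$
or, more cleanly, $\IndexingSet_n = [n]$ and
$$\BB_n = \{([1,k],1) \mid 1 \le k \le n\} \cup \{(\{j\},j) \mid j \in [n]\}.$$
Here $\BB_n$ contains all singletons and $\BB_n|_1$ is a chain. Transitivity holds because every $\BB_n|_j$ with $j \ne 1$ is just $\{j\}$. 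These sets are finite and nested, $\BB_m \subseteq \BB_n$ for $m \le n$, so the hypotheses of \cref{sec:direct_limits} are satisfied.

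\textbf{Step 1.} Compute $\OO(\BB_n)$. An ornamentation is determined by $\rho(1) = [1,k]$. Transitive closure is automatic, because every other $\rho(j)$ is a singleton. Hence $\OO(\BB_n) \simeq [n]$ as a chain via $\rho \mapsto k$.

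\textbf{Step 2.} Identify the connecting maps. By definition, $\Psi_{nm}(\rho)(1)$ is the largest $[1,\ell]$ with $\ell \le m$ contained in $[1,k]$. This gives $\ell = \min(k,m)$, which is exactly $\pi_{nm}$. So the inverse system $([n],\pi_{nm})$ is isomorphic to $(\OO(\BB_n),\Psi_{nm})$.

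\textbf{Step 3.} Compute the direct limit and apply \cref{thm:inverse_limit_general}. We have $\DirectLimit \IndexingSet_n = \Z^+$. The pointwise union closure of $\bigcup_n \BB_n$ consists of the singletons, all $([1,k],1)$, and the extra set $(\Z^+,1)$ arising as the union of all $[1,k]$. An ornamentation of the limit is again determined by $\rho(1) \in \{[1,k] : k\ge1\}\cup\{\Z^+\}$. This is order-isomorphic to the positive integers with a top adjoined, that is, $\omega+1$. \cref{thm:inverse_limit_general} then gives $\InverseLimit [n] \simeq \omega+1$.

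The main obstacle is Step 2. One must check carefully that the abstract maps $\Psi_{nm}$ really coincide with the given truncations $\pi_{nm}$ under the identification of Step 1. The inverse limit depends on the maps, not just on the objects, so this identification has to be verified explicitly. The remaining steps are routine unwinding of the definitions.
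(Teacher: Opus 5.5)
Your proposal is correct and follows the paper's proof. The paper uses exactly $\IndexingSet_n=[n]$ with $\BB_n=\{([i],1)\mid i\in[n]\}\cup\{(\{i\},i)\mid i\in[n]\}$, notes $\OO(\BB_n)\simeq[n]$, and applies \cref{thm:inverse_limit_general}; your checks that $\Psi_{nm}$ agrees with $\pi_{nm}$ and that the ornamentations of the direct limit form $\omega+1$ supply details the paper leaves implicit, though you should drop the abandoned first version with $\IndexingSet_n=\{1\}$, which is ill-formed since its sets are not subsets of $\{1\}$.
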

\begin{proof}
Let $\IndexingSet_n = [n]$ and let $$\BB_n := \{([i], 1) \mid i \in [n]\} \cup \{(\{i\}, i) \mid i \in [n]\}.$$ Then $\OO(\BB_n) \simeq [n]$ and the result follows from \cref{thm:inverse_limit_general}.
\end{proof}

\begin{corollary}
    Let $\mathrm{Bool}_n$ be the $n$-th Boolean lattice, i.e., the subsets of $[n]$ ordered by inclusion. For $m \le n$, define $\pi_{nm}: \mathrm{Bool}_n \to \mathrm{Bool}_m$ by $$\pi_{nm}(S) = S \cap [m].$$
    Then $\InverseLimit \mathrm{Bool}_n \simeq \mathrm{Bool}_{\Z^+}$ the set of subsets of $\Z^+$ ordered by inclusion.
\end{corollary}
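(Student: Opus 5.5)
The plan is to follow the proof of the previous corollary: realize each $\mathrm{Bool}_n$ as an ornamentation lattice $\OO(\BB_n)$, check that the connecting maps $\Psi_{nm}$ coincide with $\pi_{nm}$, and then apply \cref{thm:inverse_limit_general}.

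\textbf{Choice of building sets.} Take $\IndexingSet_n = \{0\}\cup[n]$, where $0$ is a new point common to all $n$, and set
\[
\BB_n := \{(\{i\}, i) \mid i \in \IndexingSet_n\} \cup \{(S\cup\{0\}, 0) \mid S \subseteq [n]\}.
\]
This is a pointed building set. Singletons are included by construction. For transitive closure, the only nontrivial case is $(S\cup\{0\},0)$ together with $(\{j\},j)$ for $j\in S$, and their union is already present. Each $\BB_n|_0$ is closed under unions, and each $\BB_n|_i$ with $i\ne 0$ is a single point. Every set involved is finite, so each $\BB_n$ is locally finite. Moreover $\IndexingSet_m\subseteq \IndexingSet_n$ and $\BB_m\subseteq\BB_n$ for $m\le n$, as \cref{thm:inverse_limit_general} requires.

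\textbf{Identifying $\OO(\BB_n)$ with $\mathrm{Bool}_n$.} An ornamentation is determined by $\rho(0)=(S\cup\{0\},0)$, since every other $\rho(i)$ is forced to be a singleton. Transitive closure holds automatically, because $\rho(j)=\{j\}\subseteq\rho(0)$ for each $j\in\rho(0)$. Hence $\rho\mapsto \rho(0)-\{0\}$ is an order isomorphism $\OO(\BB_n)\simeq \mathrm{Bool}_n$.

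\textbf{Matching the connecting maps.} For $\rho$ with $\rho(0)=S\cup\{0\}$, the largest set in $\BB_m|_0$ contained in $\rho(0)$ is $(S\cap[m])\cup\{0\}$. So under the isomorphism above, $\Psi_{nm}$ becomes $S\mapsto S\cap[m]$, which is exactly $\pi_{nm}$. The two inverse systems are therefore isomorphic.

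\textbf{Computing the direct limit.} Here $\DirectLimit\IndexingSet_n=\{0\}\cup\Z^+$. The pointwise union closure of $\bigcup_n\BB_n$ consists of all singletons together with all $(S\cup\{0\},0)$ for arbitrary $S\subseteq\Z^+$. Indeed, every such $S$, including an infinite one, is the union of its finite subsets $\{s\}$, and each $(\{s,0\},0)$ lies in some $\BB_n$. By the same argument as in the second step, $\OO(\DirectLimit\BB_n)\simeq\mathrm{Bool}_{\Z^+}$. Applying \cref{thm:inverse_limit_general} then gives the result.

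\textbf{Main obstacle.} The only point requiring care is confirming that the maps $\Psi_{nm}$ agree with the given $\pi_{nm}$, rather than merely some maps between isomorphic posets. The computation in the third step settles this directly.
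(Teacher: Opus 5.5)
Your proposal is correct and is essentially the paper's proof. The paper takes the same $\BB_n$ on $\{0,\dots,n\}$, observes $\OO(\BB_n)\simeq\mathrm{Bool}_n$, and applies \cref{thm:inverse_limit_general}; your checks that $\Psi_{nm}$ corresponds to $S\mapsto S\cap[m]$ and that $\DirectLimit\BB_n$ contains all $(S\cup\{0\},0)$ with $S\subseteq\Z^+$ supply details the paper leaves implicit.
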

\begin{proof}
Let $\IndexingSet_n = \{0, \dots, n\}$ and let $$\BB_n := \{(S, 0) \mid \{0\} \subseteq S \subseteq \{0, \dots, n\}\} \cup \{(\{i\},\; i) \mid i \in [n]\}.$$ Then $\OO(\BB_n) \simeq \mathrm{Bool}_n$ and the result follows from \cref{thm:inverse_limit_general}.
\end{proof}

\section{Infinite Tamari lattices and 312-avoiding total orders}
\label{sec:infinite_Tamari}

Our goal in this section is to study the infinite and bi-infinite Tamari lattices, their generalizations, and connections to 312-avoiding total orders. We will also discuss the continuous Tamari lattice. First we review some properties of total orders. For this section, let $X$ be a set that is totally ordered under a (fixed) relation $\le$.

\subsection{The weak order on total orders}

\begin{definition}
Let $\preceq$ be a (possibly different) total order on $X$.

    The set of \dfn{inversions} of $\preceq$ relative to $\le$ is defined as $$\Inv(\preceq) := \{ (i, j) \in X \times X \mid i < j \tand j \prec i\}.$$
    
    We define $\Weak(X_\le)$ to be the set of total orders on $X$ ordered under inclusion of inversion sets relative to $\le$.

\end{definition}

Observe that $\preceq$ is recoverable from $\Inv(\preceq)$. When $X_\le = [n]$ under their usual ordering, $\Weak(X_\le) \simeq \Weak(S_n)$, the weak order on permutations.

\begin{proposition}
\label{prop:weak_order}
    $\Weak(X_\le)$ is a complete lattice.
\end{proposition}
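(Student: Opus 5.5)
We would prove \cref{prop:weak_order} directly: characterize which subsets of $\{(i,j)\in X\times X \mid i<j\}$ arise as inversion sets, and then construct arbitrary meets and joins by closure operations. Call a set $I$ of pairs $i<j$ \emph{transitive} if $(i,j),(j,k)\in I$ implies $(i,k)\in I$. Call it \emph{co-transitive} if its complement $I^c$, taken among pairs $i<j$, is transitive. The first step is to show that $I$ is the inversion set of a total order $\preceq$ exactly when $I$ is both transitive and co-transitive. Given such an $I$, we would define $j\prec i$ iff either $i<j$ and $(i,j)\in I$, or $j<i$ and $(j,i)\notin I$. Antisymmetry and totality are immediate. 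Transitivity follows from a case check on the relative $\le$-order of three elements, using transitivity of $I$ and of $I^c$. This part is routine and does not depend on $X$ being finite.

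With this characterization, $\Weak(X_\le)$ becomes the poset of biclosed sets ordered by inclusion. It has bottom element $\emptyset$ and top element the set of all pairs $i<j$, which corresponds to the reversed order. It therefore suffices to construct arbitrary joins, because a bounded poset with all joins is a complete lattice. For a family $\{I_\alpha\}$, take $U=\bigcup_\alpha I_\alpha$ and let $J$ be its transitive closure. Any biclosed set containing every $I_\alpha$ contains $U$ and is transitive, so it contains $J$. The join will therefore be $J$, provided $J$ is co-transitive.

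The main obstacle is showing that $J$ is co-transitive. Suppose $a<b<c$ with $(a,c)\in J$, and suppose for contradiction that $(a,b),(b,c)\notin J$. Since $(a,c)\in J$, there is a chain $a=x_0<x_1<\dots<x_m=c$ with every $(x_{t-1},x_t)$ in $U$. All links of such a chain are increasing, because transitive closure only composes pairs $i<j$ into pairs $i<k$. Choose the index $t$ with $x_{t-1}<b\le x_t$. If $b=x_t$, then $(a,b)\in J$, a contradiction. Otherwise $(x_{t-1},x_t)\in I_\alpha$ for some $\alpha$. Because $I_\alpha$ is co-transitive, either $(x_{t-1},b)\in I_\alpha$ or $(b,x_t)\in I_\alpha$. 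In the first case, $(a,b)\in J$ via the chain $x_0,\dots,x_{t-1},b$. In the second case, $(b,c)\in J$ via the chain $b,x_t,\dots,x_m$. Either way we contradict the assumption. Only finite chains appear in this argument, so it works for arbitrary $X$.

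Dually, the meet is obtained by taking complements of the transitive closure of $\bigcup_\alpha I_\alpha^c$; the same argument applies with the roles of $I$ and $I^c$ swapped. An alternative approach would be to realize $\Weak(X_\le)$ as an ornamentation lattice and invoke \cref{thm:complete_lattice}, but the direct closure argument above is cleaner.
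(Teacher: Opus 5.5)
Your proposal is correct and follows the paper's proof essentially step for step. Both use the characterization of inversion sets as subsets that are transitively closed and coclosed, reduce to joins by boundedness, take the join to be the transitive closure of the union, and establish coclosedness by splitting the chain at the link where $b$ falls; your extra remark about constructing meets via complements is harmless but not needed, since the paper stops at joins.
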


This proposition is well-known in the finite case. In the case that $X_\le = \Z_\le$, the integers under their usual ordering, it is proven in~\cite{barkley2022combinatorial}, that $\Weak(\Z_\le)$ is a lattice. We extend their proof to all total orders. We recall the following characterization of inversion sets of total orders.

\begin{lemma}
    A subset $S \subseteq \Inv(\le^\opp)$ is the inversion set of a total order if and only if both $S$ and $\Inv(\le^\opp) - S$ are transitively closed. 
\end{lemma}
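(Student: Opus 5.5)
We will prove both directions directly from the definitions. Throughout, ``transitively closed'' for a set $T$ of pairs means that $(i,j),(j,k)\in T$ implies $(i,k)\in T$. Note that $\Inv(\le^\opp)=\{(i,j)\in X\times X \mid i<j\}$. No finiteness of $X$ is needed, because every condition involves at most three elements at a time.

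For the forward direction, let $\preceq$ be a total order and $S=\Inv(\preceq)$.
\begin{itemize}
\item If $(i,j),(j,k)\in S$, then $i<j<k$ and $k\prec j\prec i$. Hence $i<k$ and $k\prec i$, so $(i,k)\in S$.
\item If $(i,j),(j,k)\in \Inv(\le^\opp)-S$, then $i<j<k$ and $i\prec j\prec k$. Hence $i<k$ and $i\prec k$, so $(i,k)\notin S$.
\end{itemize}
So both $S$ and its complement are transitively closed.

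For the converse, suppose $S$ and $\Inv(\le^\opp)-S$ are both transitively closed. Define a relation $\prec$ on $X$ as follows: for $i<j$, set $j\prec i$ if $(i,j)\in S$, and $i\prec j$ otherwise. Every pair of distinct elements is compared in exactly one direction, so $\prec$ is total and antisymmetric, and once $\prec$ is shown to be a strict total order its inversion set is $S$ by construction.

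It remains to prove transitivity: for distinct $a,b,c$ with $a\prec b$ and $b\prec c$, we must show $a\prec c$. We split into the six possible $\le$-orderings of $a,b,c$.
\begin{itemize}
\item If $a<b<c$: then $(a,b),(b,c)\notin S$. Transitivity of the complement gives $(a,c)\notin S$, so $a\prec c$.
\item If $c<b<a$: then $(c,b),(b,a)\in S$. Transitivity of $S$ gives $(c,a)\in S$, so $a\prec c$.
\item The four mixed orderings: here the two given pairs lie on different sides, one in $S$ and one in the complement. We argue by contradiction. For example, take $a<c<b$. The hypotheses say $(a,b)\notin S$ and $(c,b)\in S$. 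If $a\not\prec c$, then $(a,c)\in S$, and together with $(c,b)\in S$ transitivity of $S$ forces $(a,b)\in S$, a contradiction.
\end{itemize}
Each remaining mixed case is handled the same way. Assuming the conclusion fails produces a pair which, combined with one hypothesis pair, forces the other hypothesis pair onto the wrong side, using the transitivity of either $S$ or its complement.

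The main obstacle is the bookkeeping in these four mixed cases: for each one we must choose the right pair to combine and the right closure property (of $S$ or of its complement). This is routine but requires care.
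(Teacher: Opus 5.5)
Your proof is correct and follows the paper's argument essentially verbatim: the same three-element check for the forward direction, the same definition of $\prec$ from $S$, and the same reduction of transitivity to the six $\le$-orderings of $a,b,c$. You actually work out more of the case analysis than the paper, which leaves all six cases to the reader, and each of the four mixed cases does close by the contradiction pattern you describe.
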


\begin{proof}
Note that $ \Inv(\le^\opp) = \{(i, j) \in X \times X \mid i < j\}$.
    Let $\preceq$ be a total order on $X$. If $(a, b) \tand (b, c) \in \Inv(\preceq)$ then $a < b < c \tand c \prec b \prec a$. Hence $(a, c) \in \Inv(\preceq)$. Similarly if $a < b < c$ and $(a, b), (b, c) \notin S$ then $a \prec b \prec c $ so $(a, c) \notin S$. Hence both $S$ and $\Inv(\le^\opp) - S$ are transitively closed. 

    Now suppose that both $S$ and $\Inv(\le^\opp) - S$ are transitively closed and define $\prec$ by $$a \prec b \Leftrightarrow 
        \begin{cases}
         (b, a) \in S \\ 
         (a, b) \notin S \tand a < b.
        \end{cases}\tor $$ 

For $a < b$, exactly one of the following holds:
\begin{itemize}
    \item $(a, b) \in S$ so $b \prec a$
    \item $(a, b) \notin S$ and $a \prec b$.
\end{itemize}

Hence exactly one of $a \prec b$ or $b \prec a$ holds. Now we show that $\prec$ is transitive. Suppose that $a \prec b$ and $b \prec c$. The remainder of the proof reduces to checking the cases of the 6 possible orders of $a, b, \tand c$ under $<$.    
\end{proof}

Note that $\Inv(\le^\opp) - S$ being transitively closed is equivalent to the property that if $(a, c) \in S$ and $a < b < c$ then at least one of $(a, b)$ or $(b, c)$ is in $S$. This property is also known as $S$ being \dfn{transitively coclosed.}

\begin{proof}[Proof of \cref{prop:weak_order}]
First observe that $\Weak(X_\le)$ is bounded below by $\le$ and above by $\le^\opp$. Hence it suffices to show that $\Weak(X_\le)$ is a complete join-semilattice. Now let $\Omega$ be a collection of total orders. Define $$\Inv(\Omega) := \bigcup\limits_{\preceq \in \Omega} \Inv(\preceq)$$ and $\overline{\Inv(\Omega)}$ to be its transitive closure. We claim that $\overline{\Inv(\Omega)} = \Inv(\preceq_\Omega)$ for some total order $\preceq_\Omega$. 

A well-known characterization of the transitive closure of a relation $R$ is as the union $$\overline R = \bigcup_{n = 1}^\infty R^n.$$ In particular, this implies that $(a, b) \in \overline{\Inv(\Omega)}$ if and only if there exists a sequence $a = x_1, x_2, \dots, x_n = b$ such that for all $i$, we have $(x_i, x_{i+1}) \in \Inv(\preceq_i)$ for some $\preceq_i \in \Omega$.

In particular, if $(a, b) \in \overline{\Inv(\Omega)}$ then $a < b$, and so $\overline{\Inv(\Omega)} \subseteq \Inv(\le^\opp)$. By construction $\overline{\Inv(\Omega)}$ is transitively closed. It remains to show that $\overline{\Inv(\Omega)}$ is transitively coclosed.

Suppose that $a < b < c$ and that $(a, c) \in \overline{\Inv(\Omega)}$. Then there exists a sequence $$a = x_1, x_2, \dots, x_n = c$$ such that for all $i$, we have $(x_i, x_{i+1}) \in \Inv(\preceq_i)$ for some $\preceq_i \in \Omega$.

Because $\le$ is a total order and $$a = x_1 \le \dots \le x_n = c,$$ there exists an $i$ such that $x_i \le b < x_{i+1}$.  If $x_i = b$ then $(b, x_{i+1}) \in \overline{\Inv(\Omega)}$ and $(b, c) \in \overline{\Inv(\Omega)}$ by transitivity. Otherwise, $x_i < b < x_{i+1}$ and $(x_i, x_{i+1}) \in \Inv(\preceq_i).$ As $\Inv(\preceq_i)$ is transitively coclosed, either $(x_i, b) \in \Inv(\preceq_i)$ or $(b,  x_{i+1}) \in \Inv(\preceq_i)$. Then $(a, b) \in \overline{\Inv(\Omega)}$ or $(b, c) \in \overline{\Inv(\Omega)}$ respectively. Therefore $\overline{\Inv(\Omega)}$ is transitively coclosed.

Hence there exists a total order $\preceq_\Omega$ such that $\overline{\Inv(\Omega)} = \Inv(\preceq_\Omega)$. Now suppose that $\preceq_\Omega'$ is any other common upper bound to all $\preceq \in \Omega$. Then necessarily $\Inv(\Omega) \subseteq \Inv(\preceq_\Omega')$, and as $\Inv(\preceq_{\Omega}')$ is transitively closed, $\Inv(\preceq_\Omega) \subseteq \Inv(\preceq_\Omega')$.

\end{proof}

\subsection{312-avoiding total orders and ornamentations}

\begin{definition}
    We say that a total order $\preceq$ on $X$ is \dfn{312-avoiding} (relative to $\le$) if there does not exist $a, b, c \in X$ such that $a < b < c$ but $c \prec a \prec b$. 
    
    Equivalently, $\preceq$ is 312-avoiding if whenever $(a, c) \in \Inv(\preceq) \tand (b, c) \in \Inv(\preceq)$ then $(a, b) \in \Inv(\preceq)$.

    We define  $\Weak^{312}(X_\le)$ to be the subposet of $\Weak(X_\le)$ of 312-avoiding total orders.

\end{definition}

\begin{definition}
    A subset $C \subseteq X$ is called \dfn{convex} if for all $a, b, c \in X$ with $a \le b \le c$ if $a, c \in C$ then $b \in C$.
    
    A \dfn{left segment} $R \subseteq X$ is a convex subset of $X$ with a minimal element.

\end{definition}

\begin{example}
We give some examples and non-examples of left segments.
\begin{itemize}
    \item Any closed interval $[a,b]$, half-open interval $[a,b)$, and ray $[a, \infty)$ is a left segment.
    \item The set $\{x \in \mathbb Q \mid 0 \le x < \sqrt{2}\}$ is a left segment but not a half-open interval. 
    \item The open interval $(0, 1) \subseteq \mathbb R$ is convex but not a left segment.
\end{itemize}

\end{example}

\begin{definition}
The left segment pointed building set $\BB(X_\le)$ is the set of left segments of $X$, pointed at their left endpoint. 
\end{definition}

\begin{proposition}
For all totally ordered sets $(X, \le)$ we have $\Weak^{312}(X_\le) \simeq \OO(\BB(X_\le))$.
\end{proposition}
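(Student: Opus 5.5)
The plan is to identify an ornamentation with the inversion set of a total order, reading $b \in \rho(a)$ with $a<b$ as the inversion $(a,b)$. Throughout, I use the characterization from the lemma preceding the proof of \cref{prop:weak_order}: a subset $S \subseteq \Inv(\le^\opp)$ is the inversion set of a total order if and only if $S$ is transitively closed and transitively coclosed. Since a total order is recovered from its inversion set, it suffices to exhibit an order-preserving bijection, with order-preserving inverse, between $\OO(\BB(X_\le))$ and the family of such sets $S$ that also satisfy the 312 condition: $(a,c),(b,c) \in S$ implies $(a,b) \in S$.

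\textbf{From ornamentations to inversion sets.} Given $\rho \in \OO(\BB(X_\le))$, define
$$S_\rho := \{(a,b) \mid a<b \tand b \in \rho(a)\}.$$
Clearly $S_\rho \subseteq \Inv(\le^\opp)$. I will check three properties.
\begin{itemize}
\item \emph{Transitive closure.} If $(a,b),(b,c) \in S_\rho$, then $b \in \rho(a)$, so axiom (2) of \cref{def:ornamentation} gives $c \in \rho(b) \subseteq \rho(a)$. Hence $(a,c) \in S_\rho$.
\item \emph{Coclosure.} If $(a,c) \in S_\rho$ and $a<b<c$, then $a,c \in \rho(a)$ and $\rho(a)$ is convex, so $b \in \rho(a)$. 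Thus $(a,b) \in S_\rho$.
\item \emph{312-avoidance.} This follows by the same convexity argument as coclosure.
\end{itemize}
So $S_\rho = \Inv(\preceq_\rho)$ for a 312-avoiding total order $\preceq_\rho$. Because $\rho(a) = \{a\} \cup \{b > a \mid (a,b) \in S_\rho\}$, pointwise inclusion $\rho_1 \preceq \rho_2$ is equivalent to $S_{\rho_1} \subseteq S_{\rho_2}$. Hence the map is an order embedding.

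\textbf{From 312-avoiding orders to ornamentations.} Conversely, given a 312-avoiding total order $\preceq$, define
$$\rho_\preceq(a) := \{a\} \cup \{b \mid a<b \tand (a,b) \in \Inv(\preceq)\}.$$
This set has minimum $a$, so the main point is convexity. Suppose $c \in \rho_\preceq(a)$ with $c \ne a$, and $a<b<c$. Coclosure of $\Inv(\preceq)$ gives $(a,b) \in \Inv(\preceq)$ or $(b,c) \in \Inv(\preceq)$. In the second case, $(a,c)$ and $(b,c)$ are both inversions, and 312-avoidance forces $(a,b) \in \Inv(\preceq)$. Either way $b \in \rho_\preceq(a)$, so $\rho_\preceq(a)$ is a left segment pointed at $a$. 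Transitive closedness of $\rho_\preceq$ follows directly from transitivity of $\Inv(\preceq)$: if $b \in \rho_\preceq(a)$ and $c \in \rho_\preceq(b)$, the cases $b=a$ or $c=b$ are trivial, and otherwise $(a,b),(b,c) \in \Inv(\preceq)$ give $(a,c) \in \Inv(\preceq)$.

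\textbf{Conclusion.} The two constructions are visibly mutually inverse, and both preserve inclusion, giving $\Weak^{312}(X_\le) \simeq \OO(\BB(X_\le))$. The only step with genuine content is the convexity of $\rho_\preceq(a)$. It is the one place where 312-avoidance is used, and it works by combining coclosure with the 312 condition; I expect it to be the main, though still modest, obstacle.
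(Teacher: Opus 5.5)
Your proposal is correct and follows essentially the same route as the paper. It uses the same bijection $\rho(a) = \{a\} \cup \{b \mid (a,b) \in \Inv(\preceq)\}$, with coclosure plus 312-avoidance giving convexity in one direction, and convexity giving coclosure and 312-avoidance in the other. You also verify explicitly that pointwise inclusion of ornamentations corresponds exactly to inclusion of inversion sets, which makes precise the paper's ``order-preserving by construction.''
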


\begin{proof}
We define a map $\Psi: \Weak^{312}(X_\le) \to \OO(\BB(X_\le))$ by $$\Psi(\preceq)(a) := \left(\{b \in X \mid (a, b) \in \Inv(\preceq)\} \cup \{a\}, a \right).$$
We verify that $\Psi$ is well-defined. First observe that if $a < b < c$ and $c \in \Psi(\preceq)(a)$ then either $(a, b) \in \Inv(\preceq)$ or $(b, c) \in \Inv(\preceq)$. But if $(b, c) \in \Inv(\preceq)$ then $(a, b) \in \Inv(\preceq)$ as $\preceq$ is 312-avoiding. Hence $(a, b) \in \Inv(\preceq)$ so $b \in \Psi(\preceq)(a)$. Hence $\Psi(\preceq)(a)$ is a left-segment.

Now suppose that $b \in \Psi(\preceq)(a)$. Then if $c \in \Psi(\preceq)(b)$ we have $(a, b) \in \Inv(\preceq)$ and $(b, c) \in \Inv(\preceq)$. Hence by transitivity, $(a, c) \in \Inv(\preceq)$ so $c \in \Psi(\preceq)(a)$. Therefore $\Psi$ is well-defined and is order-preserving by construction.

We can construct $\Psi^{-1}$ as follows: let $\rho \in  \OO(\BB(X_\le))$. We define a total order $\preceq_\rho$ whose inversion set is precisely $$\{(a, b) \in X \times X \mid a < b \tand b \in \rho(a)\}.$$ If $\Psi^{-1}$ is well-defined it is immediate that it is actually the inverse of $\Psi$ and is order-preserving. Observe that  $\Inv(\preceq_\rho)$ is transitively closed as $\rho$ is transitively closed and $\Inv(\preceq_\rho)$ is transitively coclosed and 312-avoiding as if $a < b < c$ and $(a, c) \in \Inv(\preceq_\rho)$ then $(a, b) \in \Inv(\preceq_\rho)$. 
\end{proof}

\subsection{The infinite, bi-infinite, and continuous Tamari lattices}
Recall from \cref{ex:TamariLatticeAndInfiniteTamariLattice} the $n$-th Tamari lattice $\Tam_n$, whose pointed building set is $\BB([n]_\le)$, the infinite Tamari lattice $\Tam_\infty$, whose pointed building set is $\BB(\Z^+_\le)$, and the bi-infinite Tamari lattice whose pointed building set is $\BB(\Z_\le)$. As a direct corollary of \cref{thm:inverse_limit_general}, we obtain:

\begin{corollary}
   The infinite Tamari lattice $\Tam_\infty \simeq \InverseLimit \Tam_n$.
\end{corollary}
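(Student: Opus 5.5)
We will derive this from \cref{thm:inverse_limit_general}. Take $\DirectedSet = \Z^+$ with its usual order, $\IndexingSet_n = [n]$, and $\BB_n = \BB([n]_\le) = \{([a,b],a) \mid 1 \le a \le b \le n\}$, so that $\OO(\BB_n) = \Tam_n$. The hypotheses of \cref{sec:direct_limits} are immediate. The sets $[n]$ are nested, and every interval of $[n]$ is an interval of $[n+1]$, so $\BB_n \subseteq \BB_{n+1}$. Each $\BB_n$ is finite, hence locally finite. The inverse system on the $\Tam_n$ is the one given by the maps $\Psi_{nm}$, which send $\rho$ to the ornamentation whose value at $a \le m$ is the largest interval $[a,b]$ with $b \le m$ contained in $\rho(a)$; this truncates each $\rho(a)$ to $[m]$.

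The main step is to identify the direct limit pointed building set. First, $\DirectLimit \IndexingSet_n = \bigcup_n [n] = \Z^+$. Next, $\bigcup_n \BB_n$ is the set of all finite intervals $([a,b],a)$ with $1 \le a \le b$. Its pointwise union closure adds, for each $a$, the unions of arbitrary nonempty families of intervals pointed at $a$. Such a union is $[a,\sup b]$: it is $[a,b]$ if the right endpoints are bounded, and $[a,\infty)$ otherwise. Hence
\[
\DirectLimit \BB_n = \{([a,b],a) \mid a \le b\} \cup \{([a,\infty),a) \mid a \in \Z^+\},
\]
which is exactly the pointed building set defining $\Tam_\infty$ in \cref{ex:TamariLatticeAndInfiniteTamariLattice}. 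Applying \cref{thm:inverse_limit_general} then gives $\InverseLimit \Tam_n \simeq \OO(\DirectLimit \BB_n) = \Tam_\infty$.

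The only step needing any care is computing the pointwise union closure, in particular checking that unbounded unions produce precisely the rays $[a,\infty)$ and nothing else. No real obstacle is expected.
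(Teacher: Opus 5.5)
Your proposal is correct and follows the paper's route: the paper states this as a direct corollary of \cref{thm:inverse_limit_general}, applied to $\BB_n = \BB([n]_\le)$, whose direct limit is $\BB(\Z^+_\le)$, the pointed building set defining $\Tam_\infty$. You have written out the verifications the paper leaves implicit, namely the hypotheses, the truncation maps $\Psi_{nm}$, and the computation of the pointwise union closure.
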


Furthermore, let $$\BB'_n := \begin{cases}
    \BB(\{-k, \dots, k\}_\le) & n = 2k+1 \\ 
    \BB(\{-k+1, \dots, k\}_\le) & n = 2k
\end{cases}$$

Observe that $\OO(\BB'_n) \simeq \Tam_n$. Then under this (different) direct limit of Tamari lattices we obtain the following corollary.

\begin{corollary}
   The bi-infinite Tamari lattice $\Tam_{\pm \infty} \simeq \InverseLimit \OO(\BB'_n)$.
\end{corollary}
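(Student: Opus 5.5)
This will follow directly from \cref{thm:inverse_limit_general}, applied to the directed set $\DirectedSet = \Z^+$ with $\IndexingSet_n$ equal to $\{-k,\dots,k\}$ for $n = 2k+1$ and $\{-k+1,\dots,k\}$ for $n=2k$, and $\BB_n := \BB'_n$. First I would check that the hypotheses of that theorem hold. The index sets are nested: going from $n=2k$ to $2k+1$ adds $-k$, and going from $2k+1$ to $2k+2$ adds $k+1$. Each $\BB'_n$ consists of finite sets, so it is locally finite. Finally, $\BB'_n \subseteq \BB'_{n+1}$ holds because a left segment of $\IndexingSet_n$, i.e.\ an integer interval $[a,b]$ pointed at $a$, is still an integer interval of $\IndexingSet_{n+1}$ pointed at $a$. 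The maps $\Psi_{n+1,n}$ in the inverse system are therefore the truncation maps from the definition preceding \cref{thm:inverse_limit_general}, and these are the maps the inverse limit in the corollary refers to.

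The remaining step is to identify $\DirectLimit \BB'_n$ with $\BB(\Z_\le)$, the pointed building set of $\Tam_{\pm\infty}$. Since $\bigcup_n \IndexingSet_n = \Z$, the union $\bigcup_n \BB'_n$ is exactly the set of all finite intervals $([a,b],a)$ with $a\le b$ in $\Z$. Its pointwise union closure adds, for each fixed $a$, arbitrary nonempty unions of the intervals $[a,b]$. Such a union is either $[a,b_{\max}]$, if the right endpoints are bounded, or $[a,\infty)$ otherwise. So $\DirectLimit\BB'_n = \{([a,b],a)\} \cup \{([a,\infty),a)\}$, which is precisely the pointed building set in \cref{ex:TamariLatticeAndInfiniteTamariLattice}(3), equivalently the left segments of $\Z$.

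Combining the two steps, \cref{thm:inverse_limit_general} gives $\InverseLimit \OO(\BB'_n) \simeq \OO(\DirectLimit \BB'_n) = \OO(\BB(\Z_\le)) = \Tam_{\pm\infty}$. I do not expect a genuine obstacle. The only points needing care are the bookkeeping that the alternating growth of the index sets is genuinely nested, and the computation of the pointwise union closure, in particular checking that unbounded unions produce exactly the rays $[a,\infty)$ and nothing else.
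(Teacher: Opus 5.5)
Your proposal is correct and follows the paper's route: the paper states this corollary as a direct application of \cref{thm:inverse_limit_general} with no further argument. You have supplied the checks the paper leaves implicit, namely that the alternately growing index sets are nested, that each $\BB'_n$ is locally finite, that $\BB'_n \subseteq \BB'_{n+1}$, and that $\DirectLimit \BB'_n$ consists of the intervals $([a,b],a)$ and rays $([a,\infty),a)$, which is $\BB(\Z_\le)$.
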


Equivalently, in order to obtain $\Tam_{\pm \infty}$, one can take the projection maps $\Tam_{n+1} \mapsto \Tam_n$ by removing alternating end points of the interval. 

\begin{remark}

In~\cite{dehornoy2012tamari}, a direct limit of Tamari lattices is considered, but this fails to be a complete lattice.
\end{remark}

\subsection{The continuous Tamari lattice}

Recall from~\cref{ex:ContinuousTamariLattice} that the \emph{continuous Tamari lattice} has $\IndexingSet$ equal to a closed interval in $\mathbb R$ and has for its pointed building set the collection of half-open intervals of the form $[a, b)$ where $a\le b$ pointed at their left endpoint. (And where $[a,a) := \{a\}$ by convention.)

\begin{example}
    Focusing on the example, $\IndexingSet = [0,1]$, let $\rho \in \OO(\BB)$ and let $[0,1]^{[0,1]}$ be the set of functions from $[0,1] \to [0,1]$. We can define an injective map $F: \OO(\BB) \to [0,1]^{[0,1]}$ by $F(\rho)(a) = b$ where $\rho(a) = [a, b)$. The image of this map is precisely all functions $f: [0,1] \to [0,1]$ such that
    \begin{itemize}
        \item for all $x \in [0,1]$ $x \le f(x)$
        \item for all $x, y \in [0,1]$ if $x < y$ and $y < f(x)$ then $f(y) \le f(x)$.
    \end{itemize}

    Examples of such functions include $f(x) = x, f(x) = 1, f(x) = |x-\frac12| + \frac 12$.

    Furthermore, if we order $f \le g$ if and only if $f(x) \le g(x)$ for all $x \in [0,1]$, then this is a poset embedding. We note that these functions $f$ are not necessarily continuous or even measurable as any function of the form $$f(x) = \begin{cases} x & x \in A \\ 1 & x \notin A \end{cases}$$ is in the image of $F$ for any $A \subseteq [0,1]$.
\end{example}

\begin{proposition}
\label{prop:continuousTamariNotIsomorphic}
    The continuous Tamari lattice $\OO(\BB)$ is not isomorphic to $\OO(\BB([0,1]_\le))$.
\end{proposition}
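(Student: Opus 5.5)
The plan is to find a lattice-theoretic invariant that separates the two posets. The invariant is the existence of cover relations. Write $\BB$ for the half-open interval building set of \cref{ex:ContinuousTamariLattice} on $[0,1]$, and $\BB([0,1]_\le)$ for the left segment building set. I will show that $\OO(\BB([0,1]_\le))$ has at least one cover relation, while $\OO(\BB)$ has none. An isomorphism of posets preserves covers, so this proves the proposition.

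\emph{Step 1: a cover in $\OO(\BB([0,1]_\le))$.} Fix $0\le a<b\le 1$ and use the copy of $\BB|_a$ from \cref{rmk:isomorphic_copy}. Both $([a,b),a)$ and $([a,b],a)$ are left segments, and I claim $\rho_{([a,b),a)} \precdot \rho_{([a,b],a)}$. Any $\tau$ with $\rho_{([a,b),a)}\preceq\tau\preceq\rho_{([a,b],a)}$ has $\tau(j)=\{j\}$ for $j\ne a$. Also $[a,b)\subseteq \tau(a)\subseteq [a,b]$, and $\tau(a)$ is a left segment. Since $[a,b]\setminus[a,b)=\{b\}$, this forces $\tau(a)\in\{[a,b),[a,b]\}$. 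So the two ornamentations form a cover.

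\emph{Step 2: no covers in $\OO(\BB)$.} Suppose $\rho\prec\sigma$, and pick $a$ with $\rho(a)=[a,b)\subsetneq\sigma(a)=[a,c)$, so $b<c$. For $d\in(b,c)$, define $\tau_d:=\rho\join\rho_{([a,d),a)}$. Since $[a,d)\subseteq\sigma(a)$, we get $\rho\prec\tau_d\preceq\sigma$. I will compute $\tau_d$ with the explicit $\sigma/\pi/\nu$ construction of the join from the proof of \cref{thm:complete_lattice}. It changes only the values at points whose current set contains $a$, together with $a$ itself. At $a$ it gives $\tau_d(a)=[a,s_d)$, where $s_d$ is the least point $\ge d$ that is closed under $x\mapsto \sup\rho(x)$ for $x<s_d$. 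The goal is to find some $d$ with $\tau_d\neq\sigma$.

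\emph{The main obstacle.} Step 2 can fail for the index $a$: it may happen that $s_d=c$ for every $d\in(b,c)$. For example, if $\rho(x)=[x,c)$ for all $x\in(b,c)$, then any enlargement of $\rho(a)$ past $b$ is swept up to $c$. In that case I would move the witness index and search for the cover lower down. I would consider the set $X=\{x : \rho(x)\ne\sigma(x)\}$ and choose an index $a'\in(b,c)$ (or near $\inf X$) where the gap $\rho(a')\subsetneq\sigma(a')$ has no such "sweeping" behaviour. Then I would repeat the Step 2 argument at $a'$.

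If that gets too tangled, the fallback is a direct interpolation. Choose $e$ strictly between the endpoints of $\rho(x)$ and $\sigma(x)$ at a single well-chosen $x$. Then define $\tau$ pointwise by $\tau(y)=\sigma(y)\cap[y,e)$ when $\rho(y)\subseteq[y,e)$, and $\tau(y)=\sigma(y)$ otherwise. Transitive closure for this $\tau$ needs to be checked by hand.

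Either route uses the key feature that distinguishes $\BB$ from $\BB([0,1]_\le)$. Every chain $\BB|_x$ in $\BB$ is dense, with no jumps of the form $[x,b)\precdot[x,b]$, so there is always room to interpolate. Making that interpolation respect transitive closure is the step I expect to be hardest.
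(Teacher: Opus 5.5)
There is a genuine gap. The claim in Step 2 that $\OO(\BB)$ has no cover relations is false, so no amount of care with the interpolation can rescue it. The ``sweeping'' situation you flag as the main obstacle actually produces a cover.

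Take $\rho(0)=[0,\tfrac13)$, $\rho(y)=[y,\tfrac23)$ for $y\in[\tfrac13,\tfrac23)$, and $\rho(y)=\{y\}$ otherwise. Let $\sigma$ agree with $\rho$ except that $\sigma(0)=[0,\tfrac23)$. Both are ornamentations of the half-open-interval building set, and $\rho\prec\sigma$. Any $\tau$ with $\rho\preceq\tau\preceq\sigma$ agrees with $\rho$ away from $0$ and has $\tau(0)=[0,d)$ with $\tfrac13\le d\le\tfrac23$. If $\tfrac13<d<\tfrac23$, then any $y\in(\tfrac13,d)$ lies in $\tau(0)$, while $\tau(y)=[y,\tfrac23)\not\subseteq[0,d)$, which violates transitive closure. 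Hence $\rho\precdot\sigma$.

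In this example $\{x : \rho(x)\neq\sigma(x)\}=\{0\}$, so there is no other index to move the witness to. Your fallback $\tau$ also fails transitive closure at exactly these points $y$. Density of each chain $\BB|_x$ does not propagate to density of $\OO(\BB)$, because transitive closure couples different coordinates.

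The fix is to localize the invariant, which is what the paper does. Let $\hat 0$ be the minimum.
\begin{itemize}
\item In $\OO(\BB)$, the element $\rho_{([0,1),0)}$ is not $\hat 0$. Its principal ideal consists exactly of the $\rho_{([0,d),0)}$ with $d\in[0,1]$ (where $[0,0)=\{0\}$). This is a chain isomorphic to the real interval $[0,1]$, so it contains no cover relation.
\item In $\OO(\BB([0,1]_\le))$, every $\sigma\neq\hat 0$ has some $x$ and some $y>x$ with $y\in\sigma(x)$. By convexity $[x,y]\subseteq\sigma(x)$, so your Step 1 cover $\rho_{([x,y),x)}\precdot\rho_{([x,y],x)}$ lies in the principal ideal of $\sigma$.
\end{itemize}
A poset isomorphism preserves $\hat 0$, principal ideals and covers, so this proves the proposition. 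Your Step 1 is correct and is exactly the ingredient needed on that side; only Step 2 has to be replaced.
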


The intuition for why these two lattices are not isomorphic is that the pointed building set for the continuous Tamari lattice only includes half-open intervals whereas $\BB([0,1]_\le)$ also contains closed intervals. In applications, one may prefer the continuous Tamari lattice because it is more uniform and because of this identification with functions.

\begin{proof}[Proof of \protect{\cref{prop:continuousTamariNotIsomorphic}}]
Let $\rho \in \OO(\BB)$ be defined by $$\rho(i) = \begin{cases} ([0,1), 0) & i = 0 \\ (\{i\}, i) & \text{otherwise}. \end{cases}$$
The principal order ideal generated by $\rho$ is isomorphic to $[0,1]$ which does not contain any cover relations. However, any non-trivial principal order ideal in $\OO(\BB([0,1]_\le))$ contains a cover relation, namely one of the form $\rho_1 \precdot \rho_2$ where $$\rho_1(i) = \begin{cases} ([0,a), 0) & i = 0 \\ (\{i\}, i) & \text{otherwise} \end{cases} \qquad \qquad \rho_2(i) = \begin{cases} ([0,a], 0) & i = 0 \\ (\{i\}, i) & \text{otherwise} \end{cases}  $$
for some $a \in [0,1]$.

\end{proof}

\section{Ornamentation lattices under group actions}
\label{sec:group_actions}

Let $G$ be a group acting on $\IndexingSet$.
\begin{definition}
Let $\BB$ be a pointed building set. For $g \in G$ and $(S, i) \in \BB$, define $g \cdot (S, i) := (g\cdot S, g\cdot i)$ where $g \cdot S = \{g \cdot j \mid j \in S\}$. We say that $\BB$ is \dfn{$G$-invariant} if for all $g \in G$ and $(S, i) \in \BB$ we have $g\cdot (S, i) \in \BB$. An ornamentation $\rho \in \OO(\BB)$ is called {$G$-invariant} if for all $i \in \IndexingSet$ and $g \in G$ we have $\rho(g \cdot i) = g\cdot \rho(i)$. We denote the set of $G$-invariant ornamentations of $\BB$ by $\OO_G(\BB)$.
\end{definition}

\begin{example}
    Let $n \in \Z^+$ and let $G = \Z$ act on $\IndexingSet = \Z$ by translation by $n$, i.e., $a \cdot k= an+k$. Then for $\BB$ from the bi-infinite Tamari lattice (see \cref{ex:TamariLatticeAndInfiniteTamariLattice}), we have $\OO_G(\BB) \simeq \ATam_n$.
\end{example}

\begin{proposition} If $\BB$ is $G$-invariant, then $(\OO_G(\BB), \preceq)$ is a sublattice of $(\OO(\BB), \preceq)$.
\end{proposition}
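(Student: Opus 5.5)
We need to show that $\OO_G(\BB)$ is closed under the meets and joins computed in $\OO(\BB)$. The natural idea is to exploit the explicit formulas from the proof of \cref{thm:complete_lattice}: the meet $\lambda$ (largest element of $\BB|_i$ inside $\Omega|_i$) and the join $\nu$ built from $\sigma$ and the digraph $D(\sigma)$. The key observation is that each $g \in G$ acts on $\OO(\BB)$ by $(g\ast\rho)(i) := g\cdot \rho(g^{-1}\cdot i)$, and $\OO_G(\BB)$ is precisely the fixed-point set of this action.

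First, we check that this action is well defined and order-preserving. $G$-invariance of $\BB$ gives $g\cdot\rho(g^{-1} i)\in \BB|_i$, because it is pointed at $g\cdot g^{-1}i=i$. For transitive closure, suppose $j\in g\cdot\rho(g^{-1}i)$. Then $g^{-1}j\in\rho(g^{-1}i)$, so $\rho(g^{-1}j)\subseteq\rho(g^{-1}i)$, and applying $g$ gives the required containment. Since $g$ acts bijectively on $\IndexingSet$, the action preserves inclusions, so $\rho\preceq\rho'$ implies $g\ast\rho\preceq g\ast\rho'$. The action of $g^{-1}$ is inverse to that of $g$, so each $g$ acts as a poset automorphism of $\OO(\BB)$.

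Next, any poset automorphism of a complete lattice preserves arbitrary meets and joins: $g\ast\bigjoin\Omega=\bigjoin(g\ast\Omega)$, and likewise for meets. If every element of $\Omega$ lies in $\OO_G(\BB)$, then $g\ast\Omega=\Omega$, so $g\ast\bigjoin\Omega=\bigjoin\Omega$, and similarly for $\bigmeet\Omega$. Hence $\OO_G(\BB)$ is closed under (even arbitrary) meets and joins of $\OO(\BB)$, and is therefore a (complete) sublattice. If one prefers a concrete check, one can verify directly that $\lambda(g i)=g\lambda(i)$ and $\nu(g i)=g\nu(i)$. This follows because $g$ maps $\BB|_i$ bijectively onto $\BB|_{gi}$ preserving inclusion, and maps $D(\sigma)$ isomorphically to itself via $i\mapsto gi$.

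The main obstacle is just the bookkeeping in verifying that the action lands in $\OO(\BB)$. Once that is done, the automorphism argument makes closure under meets and joins formal. Nonemptiness is immediate, since the minimal and maximal ornamentations are $G$-invariant.
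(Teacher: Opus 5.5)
Your proof is correct. It reaches the same conclusion as the paper by a more conceptual route.

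The paper's proof is one sentence. It observes that the explicit formulas from the proof of \cref{thm:complete_lattice} commute with the action of $G$:
\begin{itemize}
\item the meet takes, at each $i$, the largest member of $\BB|_i$ inside $\bigcap_{\rho\in\Omega}\rho(i)$;
\item the join is built from $\sigma$ and reachability in the digraph $D(\sigma)$.
\end{itemize}
So these formulas return $G$-invariant ornamentations when given $G$-invariant ones. This is exactly the ``concrete check'' you mention as an aside.

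Your main argument never looks inside $\lambda$ or $\nu$. You let $G$ act on $\OO(\BB)$ by $(g\ast\rho)(i)=g\cdot\rho(g^{-1}\cdot i)$ and verify that each $g\ast$ is an order automorphism. You then use only the universal property: order automorphisms of a complete lattice preserve arbitrary meets and joins, so the common fixed-point set $\OO_G(\BB)$ is closed under them.

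What your route buys:
\begin{itemize}
\item It depends only on $\OO(\BB)$ being a complete lattice, not on the particular construction of its operations.
\item It makes explicit that $g\ast\rho$ is again an ornamentation. The paper uses this tacitly.
\item It shows for free that $\OO_G(\BB)$ is a complete sublattice containing $\hat 0$ and $\hat 1$, via the case $\Omega=\emptyset$.
\end{itemize}

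The paper's route is shorter, since the explicit formulas are already on the page. However, it leaves you to check that each ingredient of the join construction is $G$-equivariant: the union $\sigma$, the digraph $D(\sigma)$, and reachability in it. Your automorphism argument avoids this.
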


\begin{proof}
    In~\cref{sec:Lattice_Properties}, we gave an explicit construction of the meet and join in $\OO(\BB)$. Both constructions are symmetric under the action of $G$.
\end{proof}

\begin{definition}
    If $\IndexingSet = \{-n, \dots -1, 1, \dots, n\}$, $G = \{1, -1\}$ acts by multiplication, and $\BB$ is $G$-invariant, we call $\BB$ \dfn{signed} and $\OO_G(\BB)$ the lattice of \dfn{signed ornamentations} of $\BB$. For clarity of notation, we will denote this by $\OO_{\pm}(\BB)$. 
\end{definition}
Compare this definition to the signed posets of Reiner~\cite{reiner1993signed}.

\begin{example}
\label{ex:signed_cycle}
    Let $\IndexingSet = \{-n, \dots, -1\} \cup \{1, \dots, n\}$ and define a digraph on $\IndexingSet$ with edge set $$E = \{(i, i+1) \mid i \in [n-1]\} \cup \{(-i, -i-1) \mid i \in [n-1]\} \cup \{(n, -1), (-n, 1)\}.$$ Then the digraphical pointed building set is signed. 

    We call $\OO_{\pm}(\BB)$ the \dfn{centrally symmetric affine Tamari lattice} and denote it $\CSymATam_n$.
\end{example}

\begin{example}
\label{ex:rotated_cycle}
    More generally, let $m, n$ be positive integers with $m \ge 2$ and let $D_{mn}$ be an oriented cycle with vertex set $\{1, \dots, mn\}$ and edge set $$E = \{(i, i+1) \mid 1 \le i \le mn-1\} \cup \{(mn, 1)\}.$$ Let $\BB_{m,n}$ be the digraphical pointed building set of $D_{mn}$ and let $G = {\Z/m\Z}$ act by rotation.  One immediately recovers $\CSymATam_n$ by setting $m = 2$.
\end{example}

For $\BB$ from \cref{ex:signed_cycle} and \cref{ex:rotated_cycle}, it will be convenient for  $(S, i) \in \BB$ to define $$|(S,i)| := \begin{cases} |S| & S \neq \IndexingSet \\ n+1 & S = \IndexingSet.\end{cases}$$

\begin{remark}
In fact, for integers $\ell, m \ge 2$ the map $\Psi: \OO_{\Z/\ell\Z}(\BB_{\ell,n}) \to \OO_{\Z/m\Z}(\BB_{m,n})$ defined by $$\Psi(\rho)(i) :=  \begin{cases}
     \rho(i) & |\rho(i)| \le n\\
     ([mn], i) & |\rho(i)| = n+1
 \end{cases}$$
 for $1 \le i \le n$ and extended by rotation is an isomorphism.
\end{remark}

\begin{figure}[ht]
    \centering
    \includegraphics[width=.9\linewidth]{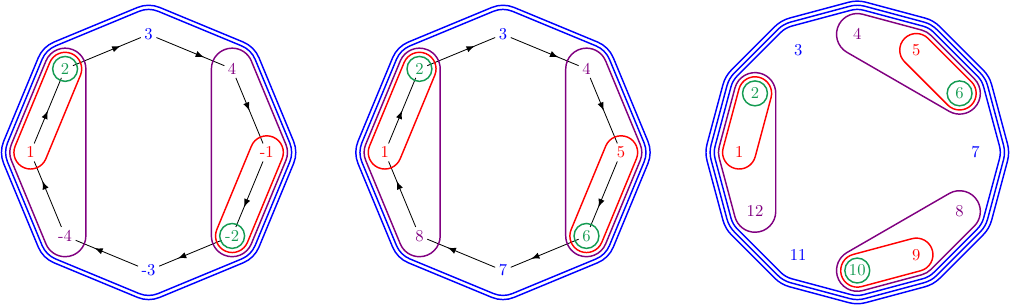}
    \caption{Three ornamentations invariant under group actions. Left: a signed ornamentation. Middle: the same ornamentation in $\OO_{\Z/2\Z}(\BB_{2,4})$. Right: the isomorphic copy of that ornamentation in $\OO_{\Z/2\Z}(\BB_{3,4})$. }
    \label{fig:GroupActionExample}
\end{figure}

The \emph{cyclic Tamari lattice} was defined in~\cite{barkley2025affine} in connection to combinatorial representation theory. For our purposes, we use the following definition of the cyclic Tamari lattice, which was shown in~\cite{barkley2025affine} to be equivalent to their original definition.
\begin{definition}[\cite{barkley2025affine}]
    An \dfn{arc} is a pair of integers $(i, j)$ satisfying
    \begin{itemize}
        \item $i < j$
        \item $1 \le i \le n$
        \item $j \le i+n$.
    \end{itemize}

    A \dfn{cyclic arc torsion class} is a collection of arcs $D$ satisfying
    \begin{itemize}
        \item if $(i,k) \in D$ and $i < j < k$, then $(i,j) \in D$;
        \item if $(i,j), (j,k) \in D$ and $k - i \le n$, then $(i,k) \in D$.
    \end{itemize}

    The \dfn{cyclic Tamari lattice} denoted $\CTam_n$ is the set of cyclic arc torsion classes ordered by inclusion.
\end{definition}

\begin{theorem}
\label{thm:CyclicTamariIsomorphism}
    For all integers $n \ge 2$, $\CSymATam_n \simeq \CTam_n$ .
\end{theorem}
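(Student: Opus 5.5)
The plan is to build an explicit order-preserving bijection between $\CSymATam_n$ and $\CTam_n$, with inverse also order-preserving.

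\textbf{Setup.} Relabel the vertices of the signed cycle of \cref{ex:signed_cycle} by positions in $\Z/2n\Z$: send $i\mapsto i$ and $-i\mapsto n+i$ for $i\in[n]$. Under this relabelling the edges are $p\to p+1$, so the digraph is the oriented $2n$-cycle. The sign action becomes rotation by $n$, which is the case $m=2$ of \cref{ex:rotated_cycle}.

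The sets in $\BB|_p$ are $\{p,p+1,\dots,p+k-1\}$ (mod $2n$) for $1\le k\le 2n$. A signed ornamentation $\rho$ is determined by $\rho(1),\dots,\rho(n)$, since $\rho(p+n)=\rho(p)+n$.

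\textbf{Key structural fact.} For a signed ornamentation, each $\rho(p)$ either has at most $n$ elements or is all of $\IndexingSet$; this justifies the convention $|(S,i)|=n+1$. To see it, suppose $\rho(p)\ni p+n$. Transitivity gives $\rho(p+n)\subseteq\rho(p)$. By invariance, $\rho(p+n)$ is the segment of the same length starting at $p+n$. If $|\rho(p)|\ge n+1$, the two segments together cover the whole cycle, so $\rho(p)=\IndexingSet$.

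\textbf{The map.} Define $\Phi(\rho)$ to be the set of arcs
\[
\{(i,j) : 1\le i\le n,\ i<j\le i+n,\ j \bmod 2n\in\rho(i)\}.
\]
By the structural fact, $\rho(i)$ is recovered from the arcs starting at $i$. If $(i,i+n)$ is present, then $\rho(i)=\IndexingSet$; otherwise $\rho(i)$ is $\{i\}$ together with the residues of the $j$ with $(i,j)\in\Phi(\rho)$. Hence $\Phi$ is injective, and clearly both $\Phi$ and this reconstruction preserve inclusion.

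We then check that $\Phi(\rho)$ is a cyclic arc torsion class.
\begin{itemize}
\item Prefix closure holds because $\rho(i)$ is a forward segment starting at $i$.
\item For the second axiom, take $(i,j),(j,k)\in\Phi(\rho)$ with $k-i\le n$. Reducing $j$ modulo $n$ and using invariance, $k\in\rho(j)$ in position coordinates. Transitivity of $\rho$ gives $\rho(j)\subseteq\rho(i)$, so $(i,k)\in\Phi(\rho)$.
\end{itemize}

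\textbf{Surjectivity (expected main obstacle).} Given a cyclic arc torsion class $D$, define $\rho(i)$ by the reconstruction rule above for $1\le i\le n$, and extend by rotation by $n$. Each $\rho(i)$ is a forward segment by prefix closure, so it lies in $\BB|_i$.

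The difficulty is transitivity, since $D$ only has a transitivity axiom when $k-i\le n$. Take $j\in\rho(i)$ and $k\in\rho(j)$, written as integers $i<j<k$ with $j\le i+n$ and $k\le j+n$, and assume $\rho(i)$ is not already full.
\begin{itemize}
\item If $k-i\le n$, the axiom gives $(i,k)\in D$ directly.
\item If $k-i>n$, set $k'=i+n$, so $j<k'<k$. Prefix closure applied to $(j,k)$ gives $(j,k')\in D$. Since $k'-i=n$, transitivity gives $(i,i+n)\in D$. This forces $\rho(i)=\IndexingSet$, which contains $\rho(j)$ trivially.
\end{itemize}
One also needs the case analysis for which of $i,j$ lie in $[n]$ versus the shifted copy; this reduces to the above by applying rotation. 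Finally, when $(i,i+n)\in D$, prefix closure puts every arc $(i,j)$ in $D$, which makes $\Phi(\rho)=D$. Hence $\Phi$ is a poset isomorphism.
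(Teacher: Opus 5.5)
Your proposal is correct and is the paper's proof with the details filled in. The paper only asserts that $\Psi(\rho)=\{(i,i+j-1)\mid 1\le i\le n,\ 1\le j<|\rho(i)|\}$ is an isomorphism; once its index slip is fixed to $(i,i+j)$, this is exactly your $\Phi$, and your structural fact is what justifies the convention $|(S,i)|=n+1$. Your ``reduce $j$ modulo $n$'' and rotation steps implicitly read the second torsion-class axiom for arcs taken up to translation by $n$, which is the intended convention and is genuinely needed: read literally with $1\le j\le n$, the set $\{(1,2),(1,3),(2,3)\}$ would be an extra class for $n=2$ with no ornamentation mapping to it.
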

\begin{proof}
    The map $\Psi: \CSymATam_n \to \CTam_n$ by $$\Psi(\rho) := \{(i, i+j-1) \mid 1 \le i \le n, 1 \le j < |\rho(i)|\}$$ is an isomorphism.
\end{proof}

\begin{remark}
In~\cite{barkley2025affine}*{Section 7}, $\ATam_n$ is described using triangulations of type $D_n$ and it is conjectured in~\cite{barkley2025affine}*{Section 10.2} that $\CTam_n$ can be described using centrally symmetric triangulations of type $D_{2n}$. By applying their map to our centrally symmetric ornamentations, we obtain centrally symmetric triangulations of type $D_{2n}$, thereby resolving their conjecture.
\end{remark}

In fact, our description of $\CTam_n$ allows us to give simple proofs for a few of the results of~\cite{barkley2025affine}. 

\begin{corollary}[\protect{\cite{barkley2025affine}*{Theorem 4.13}}]
    For all integers $n \ge 2$, $\CTam_n$ is semidistributive.
\end{corollary}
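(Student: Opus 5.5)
The plan is to transfer semidistributivity from a larger ornamentation lattice. By \cref{thm:CyclicTamariIsomorphism}, $\CTam_n \simeq \CSymATam_n = \OO_{\pm}(\BB)$, where $\BB$ is the signed digraphical pointed building set of \cref{ex:signed_cycle}. Since $\BB$ is $G$-invariant for $G=\{\pm 1\}$, the proposition on $G$-invariant ornamentations says that $\OO_{\pm}(\BB)$ is a sublattice of $\OO(\BB)$, with the same joins and meets. Meet- and join-semidistributivity are quasi-identities in $\join$ and $\meet$. They therefore pass to sublattices: if $x,y,z$ lie in the sublattice and $x \meet y = x\meet z$ there, the same equation holds in the ambient lattice, and the conclusion $x \meet y = x \meet (y\join z)$ is computed by the same operations. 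So it suffices to show that $\OO(\BB)$ is semidistributive. Here $\BB$ is the digraphical building set of an oriented $2n$-cycle, so $\OO(\BB)\simeq \ATam_{2n}$ by \cref{ex:aff_tam}.

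For that step I would use \cref{prop:SemiDistributiveLemma}, since $\OO(\BB)$ is finite. I would copy the structure of the proof of \cref{thm:chain_implies_semidistributive}. The key observation is that for a directed cycle every $\BB|_i$ is a chain: the $i$-connected sets pointed at $i$ are exactly the arcs $\{i,i+1,\dots,i+k\}$ (indices mod $2n$). Given a cover $\rho\precdot\sigma$, I would define $\mu_\downarrow$ and $\mu_\uparrow$ as in that proof. For $\mu_\downarrow$, enlarge $\rho(i)$ to the next element of the chain $\BB|_i$ at one coordinate and keep singletons elsewhere. For $\mu_\uparrow$, shrink $\sigma(i)$ to its predecessor in the chain at one coordinate and take $\max\BB|_j$ elsewhere. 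I would then check the six bullet properties listed there.

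The main obstacle is that $\BB$ is \emph{not} acyclic, so \cref{lem:cover_relations} is unavailable. A cover $\rho\precdot\sigma$ may change several coordinates at once, as in the remark following that lemma. This happens only when some $\sigma(i)$ becomes the whole vertex set, since proper arcs on a cycle behave acyclically. My first attempt would be to split covers into two cases. In the first case no coordinate reaches the full cycle; then the minimal-element argument of \cref{lem:cover_relations} goes through, using the order $i\preceq_X j \iff j\in\sigma(i)$ restricted to proper arcs, and we get a unique changed coordinate $i$. In the second case, $\sigma$ fills the whole cycle at a set of coordinates $X$. Transitivity then forces $X$ to be closed under the relation ``$j\in\rho(i)$'', and I would take $\mu_\downarrow$ and $\mu_\uparrow$ to be the ornamentations that are full exactly on the minimal such set, respectively maximal on the complement. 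The delicate point is verifying uniqueness of the minimal element of $\Delta(\sigma)-\Delta(\rho)$ in this second case. If this becomes unwieldy, I would instead invoke the known semidistributivity of $\ATam_{2n}$ from~\cite{barkley2025affine}. With that in hand, the sublattice argument of the first paragraph finishes the proof.
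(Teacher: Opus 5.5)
Your reduction is exactly the paper's proof: $\CTam_n\simeq\CSymATam_n$, $G$-invariant ornamentations form a sublattice, semidistributivity passes to sublattices, and $\OO(\BB)\simeq\ATam_{2n}$. The paper likewise takes semidistributivity of $\ATam_{2n}$ as known, since \cref{thm:chain_implies_semidistributive} does not apply when $\BB$ is not acyclic. So with your fallback citation the argument is complete and coincides with the paper's.

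The self-contained route you try first would not work as sketched, for two reasons.

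First, the $\mu_\uparrow$ you copy from the proof of \cref{thm:chain_implies_semidistributive} is not an ornamentation here. Every $\max\BB|_j$ equals $\IndexingSet$. So as soon as the predecessor $P$ of $\sigma(i)$ contains some $j\neq i$, you get $j\in\mu_\uparrow(i)=P$ but $\mu_\uparrow(j)=\IndexingSet\not\subseteq P$. The same defect already appears in $\Tam_3$ for the cover $([1,2],\{2\},\{3\})\precdot([1,3],\{2\},\{3\})$, so that template needs repair before it can be copied.

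Second, your worry about covers changing several coordinates is unfounded on a directed cycle. Let $X$ be the set of changed coordinates, and suppose $k_1,\dots,k_m\in X$ satisfied $k_{t+1}\in\rho(k_t)$ cyclically. Then the sets $\rho(k_t)$ would be equal arcs with distinct starting points, hence all equal to $\IndexingSet$, which cannot grow. So some $i\in X$ lies in no $\rho(k)$ with $k\in X\setminus\{i\}$. Running the argument of \cref{lem:cover_relations} with this $i$ then shows $X=\{u\}$ is a singleton.

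From there, the argument of \cref{lem:covering_relation_when_nice_building_set} gives $\sigma(u)=\rho(u)\cup\rho(v)$, where $v$ is the vertex just after the arc $\rho(u)$. The directed-tree argument then adapts directly:
\begin{itemize}
\item the minimum of $\Delta(\sigma)-\Delta(\rho)$ is the arc $[u,v]$ at $u$, with singletons elsewhere;
\item the maximum of $\nabla(\rho)-\nabla(\sigma)$ assigns to each $w\in\rho(u)$ the arc from $w$ to the predecessor of $v$, and assigns $\IndexingSet$ to every other vertex.
\end{itemize}
No second case is needed.
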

\begin{proof}
For all integers $n \ge 1$, $\CSymATam_n$ is a sublattice of $\ATam_{2n}$ which is semidistributive. 
\end{proof}

In a poset, the \dfn{length} of a chain is the number of elements in the chain. (Some authors define it as 1 less than the number of elements in the chain.)

\begin{theorem}[\protect{\cite{barkley2025affine}*{Theorem 8.4}}]
\label{thm:AffineTamariLength}
The maximal length of a chain in $\CSymATam_n$ is $\binom{n+1}{2}+1$.   
\end{theorem}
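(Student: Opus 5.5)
The chain length here is the number of elements, so the claim is that a longest chain in $\CSymATam_n$ has exactly $\binom{n+1}{2}$ cover steps. I would work throughout with the reduced description from the proof of \cref{thm:CyclicTamariIsomorphism}. A signed ornamentation $\rho$ is determined by $\rho(1),\dots,\rho(n)$, since $\rho(-i)=-\rho(i)$. Each $\rho(i)$ is the directed segment of the $2n$-cycle starting at $i$, and its size $|\rho(i)|$ lies in $\{1,\dots,n+1\}$, with $n+1$ meaning the whole vertex set. The order is coordinatewise comparison of these sizes. The proof then has two halves: an explicit chain for the lower bound, and a strictly increasing potential for the upper bound.

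\emph{Lower bound.} I would build the chain in the spirit of the usual longest chain in $\Tam_n$, enlarging one coordinate by a single vertex at each step and processing the vertices from the end of the path backwards.
\begin{enumerate}
\item First grow $\rho(n)$ one vertex at a time until it contains $-n$. Transitivity then forces $\rho(n)$ to be everything, so this stage gives one step per size from $1$ up to $n+1$, at most $n$ steps.
\item Then grow $\rho(n-1)$, which can only make fresh additions until it absorbs $\rho(n)$ (or $-\rho(n)$), and continue in the same way down to $\rho(1)$.
\end{enumerate}
I expect to order the coordinates so that coordinate $k$ contributes exactly $k$ genuine cover steps, giving $\binom{n+1}{2}$ steps in total. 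At each step I must check that the intermediate function is a signed ornamentation, i.e.\ transitively closed, and that consecutive terms form a cover. For the cover check I would use that a one-vertex enlargement of a single coordinate admits nothing strictly in between.

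\emph{Upper bound.} I would define an integer potential $\Phi$ on $\CSymATam_n$ with $0\le\Phi\le\binom{n+1}{2}$ that strictly increases along every cover relation. My candidate is
\[
\Phi(\rho)=\#\bigl\{\{a,b\}:1\le a\le b\le n,\ \pm b\in\rho(a)\text{ or }\pm a\in\rho(b)\text{, with }a=b\text{ meaning }-a\in\rho(a)\bigr\}.
\]
This counts unordered pairs (with repetition) of $\pm$-classes linked by $\rho$, so it is clearly at most $\binom{n+1}{2}$, and it is monotone because $\rho\preceq\sigma$ means coordinatewise inclusion.

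To show $\Phi$ strictly increases along a cover $\rho\precdot\sigma$, I would first get an analogue of \cref{lem:covering_relation_DAG} for this non-acyclic situation. Since $\CSymATam_n$ is a sublattice of $\ATam_{2n}=\OO(\BB(C_{2n}))$, a cover in it is a minimal symmetric enlargement. The aim is to show that it adjoins some $\rho(v)$ to $\rho(u)$ (and symmetrically), with $v$ the first new vertex. One then argues that the class $\{\pm u,\pm v\}$ (or the diagonal pair $\{u,u\}$ when $v=-u$) was not yet linked in $\rho$.

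This last point is the main obstacle. The danger is a cover that only lengthens a segment $\rho(u)$ by vertices whose classes were already linked to $u$ through the other sign, leaving $\Phi$ unchanged. If that happens, my first fix would be to refine $\Phi$ so that it records the sign or direction of each linkage. I would also use transitivity: once $\pm b\in\rho(a)$, all of $\rho(\pm b)$ is absorbed into $\rho(a)$, and I would try to show this forbids such redundant lengthenings from being covers, because an intermediate ornamentation could be inserted as in the proof of \cref{lem:cover_relations}.
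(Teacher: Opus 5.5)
Your potential $\Phi$ is the right one. However, the lower-bound chain as you specify it fails, and the decisive monotonicity step of the upper bound is left unproven.

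\textbf{Lower bound.} You process the coordinates in the wrong order. After stage 1 you have $\rho(n)=\rho(-n)=\IndexingSet$. The first vertex $\rho(n-1)$ can add is $n$ itself, which absorbs $\rho(n)=\IndexingSet$. So coordinate $n-1$ contributes a single step, not $n-1$, and the same holds for every later coordinate, since its successor $k+1$ is already full. You get $n+(n-1)=2n-1$ steps, a chain of $2n<\binom{n+1}{2}+1$ elements once $n\ge 3$. (Processing $\Tam_n$ from the end backwards likewise gives a maximal chain with only $n-1$ steps.)

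The fix, which is the paper's chain $\rho_{i,j}$ followed by $\hat 1$, is to go forwards:
\begin{itemize}
\item grow $\rho(1)$ through $\{1\},\{1,2\},\dots,\{1,\dots,n\}$ and then to $\IndexingSet$, with all other coordinates singletons;
\item then grow $\rho(2)$ through $\{2\},\dots,\{2,\dots,n\}$ and then to $\IndexingSet$, since its next vertex $-1$ is now full;
\item continue in the same way.
\end{itemize}
Coordinate $k$ then contributes $n-k+1$ steps, for $\binom{n+1}{2}$ in total. Only strict comparabilities are needed here, not covers.

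\textbf{Upper bound.} Your $\Phi$ is the paper's potential in disguise. A non-full $\rho(a)$ is a segment of at most $n$ vertices with distinct classes, none of them full, and no pair of non-full classes is linked from both sides. Hence $\Phi(\rho)=f(\rho)-n$, where $f(\rho)=\sum_{i=1}^n|\rho(i)|-\binom{|X_\rho|}{2}$ and $X_\rho=\{i:\rho(i)=\IndexingSet\}$.

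But you leave strict monotonicity as ``the main obstacle''. It needs neither a cover lemma for this non-acyclic building set nor a refinement of $\Phi$, and it holds for every $\rho\prec\sigma$. Pick $u\in[n]$ with $\rho(u)\subsetneq\sigma(u)$.
\begin{itemize}
\item If $\sigma(u)=\IndexingSet$, the diagonal pair $\{u,u\}$ is newly linked.
\item Otherwise let $x$ be the first vertex of $\sigma(u)$ not in $\rho(u)$. Then $|x|\neq u$, and $\{u,|x|\}$ was unlinked in $\rho$. (Note $\pm u\in\rho(|x|)$ iff $\pm u\in\rho(x)$ by symmetry.)
\end{itemize}
To see that $\{u,|x|\}$ was unlinked, check that each way of being linked forces $\sigma(u)=\IndexingSet$:
\begin{itemize}
\item if $-x\in\rho(u)$, then $\sigma(u)$ contains the $(n+1)$-vertex path from $-x$ to $x$;
\item if $u\in\rho(x)$, then $\sigma(u)\supseteq\sigma(x)$ wraps around the whole cycle;
\item if $-u\in\rho(x)$, then $-u\in\sigma(x)\subseteq\sigma(u)$.
\end{itemize}
The paper instead compares per-coordinate contributions to $f$ using $|\rho(i)|\le n-|X_\rho|$. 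Your pair-counting form makes strict monotonicity transparent, including when several coordinates become full at once, where per-coordinate bookkeeping of the $\binom{|X|}{2}$ term needs extra care. That advantage only materializes once you actually supply the argument above.
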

\begin{proof}
For $\rho \in \CSymATam_n$, define $f(\rho) = \sum\limits_{i = 1}^n|\rho(i)| - \binom{|\ \{i\ :\ \rho(i) = \IndexingSet \}\ |}{2}$.

Equivalently, let $X_\rho = \{i \in [n] \mid \rho(i) = \IndexingSet\}$ and $Y_\rho = [n] - X_\rho$ and define $X_\sigma \tand Y_\sigma$ similarly.
Then $$\begin{aligned} f(\rho) &= \sum\limits_{i \in X_\rho} |\rho(i)| - \binom{|X_\rho|}{2} + \sum\limits_{i \in Y_\rho} |\rho(i)|
\\&= |X_\rho|\left(n+1-\frac{|X_\rho|-1}{2}\right)  + \sum\limits_{i \in Y_\rho} |\rho(i)|.
\end{aligned}$$

We claim that if $\rho \prec \sigma$ then  $f(\rho) < f(\sigma)$. Suppose that $\rho \prec \sigma$ and define $$Z := \{i \in [n] \mid \rho(i) \subsetneq \sigma(i)\}.$$ 
$Z$ is non-empty as $\rho \prec \sigma$. If $i \in Y_\sigma$ then the contribution of $|\sigma(i)|$ to $f(\sigma)$ is strictly greater than the contribution of $|\rho(i)|$ to $f(\rho)$. Otherwise, suppose that $i \in Y_\rho$ and $i \in X_\sigma$. Then the difference of contributions of $|\sigma(i)|$ and $|\rho(i)|$ to $f(\sigma)$ and $f(\rho)$ is exactly $n-|X_\rho|+1- |\rho(i)|$. Observe that $|\rho(i)| \le n-|X_\rho|$ as one of the first $n-|X_\rho|$ vertices $j$ along the path starting at $i$ is in $X_\rho$, and therefore $j \notin \rho(i)$ (as $\rho(i) \neq (\IndexingSet, i)$). Hence in either case, the contribution to $f(\sigma)$ is strictly greater than the contribution to $f(\rho)$. As $Z$ is non-empty, $f(\rho) < f(\sigma)$.

Let $\hat 0, \hat 1 \in \CSymATam_n$ with $\hat 0$ minimal and $\hat 1$ maximal. To complete the proof, it suffices to find a chain of length $f(\hat 1) - f(\hat 0) +1 = n(n+1) - \binom{n}{2} - n+1 = \binom{n+1}{2} + 1$. For $1 \le i \le j \le n$, we define  $\rho_{i,j} \in \CSymATam_n$ by $$\rho_{i,j}(k) := \begin{cases}
    (\IndexingSet, k) & 1 \le k < i  \\
    (\{i, \dots, j\}, k) & k = i \\
    (\{k\}, k) & k > i
\end{cases}$$
for $1 \le k \le n$ and for $-n \le k \le -1$ $\rho_{i,j}(k) = -\rho_{i,j}(-k)$.

Observe that if $i < i'$ or $i = i' \tand j < j'$ then $\rho_{i,j} \prec \rho_{i', j'}$. Finally, $\rho_{n, n} \precdot \hat 1$, completing the proof. 
\end{proof}

\section{Geometric interpretations}
\label{sec:geometric_interpretations}

In this section, we introduce a geometric interpretation of ornamentations. 

\begin{definition}
    Let $\Phi$ be a set of vectors and let $\vec u, \vec v \in \Phi$. A subset $X \subseteq \Phi$ is called \dfn{closed in $\Phi$} if whenever $\vec u, \vec v \in X$ and $\vec w := \vec u + \vec v \in \Phi$ then $\vec w \in X$. We call $X$ \dfn{coclosed in $\Phi$} if whenever $\vec w \in X$ and $\vec w = \vec u + \vec v$ then $\vec u \in X$ or $\vec v \in X$ (or both).
\end{definition}

\begin{definition}
Let $\BB$ be a pointed building set on $\IndexingSet$. The set of vectors $\Phi_\BB \subseteq \R^{\IndexingSet}$ is defined by $$\Phi_\BB := \{e_j - e_i \mid \text{there exists } (S, i) \in \BB \st j \in (S-\{i\})\}$$ where $e_i$ is the standard basis vector corresponding to $i$.

For an ornamentation $\rho \in \OO(\BB)$ define $$V(\rho) := \{e_j - e_i \mid i \in \IndexingSet \tand j \in \rho(i) \st i \neq j\}.$$
\end{definition}

Observe that if $\IndexingSet = [n]$ then transitive closure of pointed building sets (axiom (2) in \cref{def:pointed_building_set}) implies that $\Phi_\BB$ is closed in the type-$A$ root system $\Phi_{A_{n-1}}$. Furthermore, transitive closure of ornamentations (axiom (2) in \cref{def:ornamentation}) is \emph{equivalent} to the fact that $V(\rho)$ is closed in $\Phi_\BB$ for all $\rho \in \OO(\BB)$. With this in mind, one can define an ornamentation $\rho$ to be biclosed if $V(\rho)$ is biclosed in $\Phi_\BB$.

Equivalently, an ornamentation $\rho \in \OO(\BB)$ is biclosed if for all $i \in \IndexingSet, k \in \rho(i)$, if there exist $(S, i),\, (T, j) \in \BB$ with $j \in S$ and $k \in T$ then either $j \in \rho(i)$ or $k \in \rho(j)$.

\begin{remark}
It seems interesting to study the subposet of biclosed ornamentations in $\OO(\BB)$.  We will denote this subposet $\Bicl(\BB)$.
    
\label{rem:biclosed}
    \begin{enumerate}
        \item All digraphical ornamentations of a directed tree are biclosed.
        \item Orient the edges of the complete graph $K_n$ from $i \to j$ with $i < j$ and let $\BB$ be the digraphical pointed building set of $K_n$. Then $\Phi_\BB$ is the set of positive roots of $\Phi_{A_{n-1}}$ and the poset of biclosed ornamentations is the weak order on $S_n$~\cites{dyer2019weak, barkley2023affine}.

        \item  Let $\BB$ be the graphical pointed building set of $K_n$. For $\rho \in \OO(\BB)$ define a binary operation $*_\rho$ by 
    $$i*_\rho j = 
        \begin{cases}
            i & j \notin \rho(i) \\
            j & j \in \rho(i)
        \end{cases}$$

        The operation $*_\rho$ is \dfn{quasitrivial}, i.e., for all $i, j \in [n]$ we have $i *_\rho j \in \{i, j\}$. A straightforward calculation shows that $*_\rho$ is associative if and only if $\rho$ is biclosed. In fact, one can show that $\rho \to *_\rho$ is a bijection from $\Bicl(\BB)$ to associative, quasitrivial operations on $[n]$, also known as quasitrivial semigroups~\cite{couceiro2019quasitrivial}. This construction recovers the bijection in~\cite{wang2025biclosed}. 
    \end{enumerate}

\end{remark}

In each of the examples in \cref{rem:biclosed}, $\Bicl(\BB)$ is a lattice. The first was proven in this paper, the second is well-known, and the third was proven in~\cite{dyer2014reflection} which is unpublished at the time of writing. It is not true in general that $\Bicl(\BB)$ is a lattice, see \cref{fig:NoJoinBiclosed}. However, experimentally it appears that $\Bicl(\BB)$ is a lattice for many natural choices of $\BB$. 

\begin{conjecture}
    For a finite (undirected) graph $G$, let $\BB$ be the graphical pointed building set of $G$. Then $\Bicl(\BB)$ is a lattice.
\end{conjecture}

\begin{figure}
    \centering
    \includegraphics[width=0.5\linewidth]{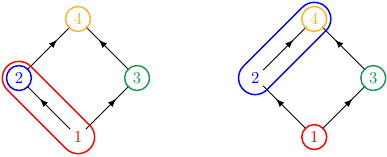}
    \caption{Two biclosed digraphical ornamentations that don't have a join in $\Bicl(\BB)$}
    \label{fig:NoJoinBiclosed}
\end{figure}

\section{Questions}
\label{sec:questions}

\begin{definition}
    A poset $P$ is \dfn{polytopal} if there exists a polytope $Q \subseteq \R^n$ and a vector $\vec u \in \R^n$ such that the 1-skeleton of $Q$ oriented in the direction $\vec u$ is the Hasse diagram of $P$.
\end{definition}

The weak order on permutations, the Boolean lattice, and the Tamari lattice are all well known polytopal lattices.
In~\cite{abram2025ornamentation}, it is shown that when $\TT$ is a rooted tree, the digraphical ornamentation lattice of $\TT$ is polytopal. In~\cite{barkley2025affine}, it is shown that $\ATam_n$ and $\CTam_n$ are always polytopal. 

\begin{question}
For graphical and digraphical $\BB$, when are $\OO(\BB), \OO_G(\BB), \Bicl(\BB)$ polytopal?
\end{question}

\begin{question}
    In this paper, we focused on graphical and digraphical pointed building sets. Are there other natural classes of pointed building sets with interesting combinatorics?
\end{question}

\begin{question}
One can view a pointed building set as a collection of lattices with compatibility conditions induced by their embeddings into the Boolean lattice. Can one develop the theory of ornamentation lattices without relying on this embedding? Can lattices other than the Boolean lattice be used?
\end{question}

\begin{remark}
One can remove the axiom that pointed building sets contain all singletons (axiom (1) of \cref{def:pointed_building_set}) and the axiom that ornamentations are pointed at their images (axiom (1) of \cref{def:ornamentation}) and still obtain join-semilattices. It may be interesting to study the combinatorics under these relaxed axioms. It may also be interesting to study a version where sets can have multiple distinguished elements.
\end{remark}

\begin{remark}
    Building sets have been highly influential in matroid theory. It would be interesting to study if pointed building sets have any application to matroids and in particular oriented matroids.
\end{remark}

\section{Acknowledgments}

Thanks to Vincent Pilaud, Felix Gelinas, Jose Bastidas, Colin Defant, Grant Barkley, Oliver Pechenik, and Sergey Fomin for insightful comments and conversations.

\bibliographystyle{amsrefs}
\bibliography{main}
\end{document}